\newcommand\ForAuthors[1]
\newcommand\comment[1]{}
\def\valu#1{\mathfrak{s}(#1)}
\def\ls#1{\boldsymbol{\ell}(#1)}
\def\funset{\Gamma}
\def\fmon{\funset_0^{\rm dec}}
\def\fmong{\funset_{{\rm ev}}^{\rm dec}}
\def\fmonfung{{\funset}_{\rm f,{\rm ev}}^{\rm dec}}
\def\fset{\Omega([0,1])}
\def\fcste{\funset_{{\rm ev}}^{\rm cst}}
\def\mon{\mathfrak{m}}
\def\cst{\mathfrak{c}}
\def\norm#1{\left\|#1\right\|}
\def\secfunh{\funset_{\rm f}}
\def\secfunb{\funset_{\rm s}}
\def\Fu{\mathfrak{F}_u}
\def\Fgen{\mathfrak{F}_{\cdot}}
\def\Idd{\mathrm{Id}}
\def\rr{\mathbb R}
\def\rd{\rr^d}
\def\nn{\mathbb N}
\def\xin{X^\mathrm{in}}
\def\ff{F}
\def\agmx{\operatorname{Argmax}}
\def\vmax{\operatorname{vmax}}
\def\kmax{\operatorname{kmax}}
\def\gs{\Delta^s}
\def\bigkse{\mathbf{K}}
\def\bigks{\mathbf{K}^s}
\def\gls#1{{\mathrm R}_{#1}}
\def\gsls#1{\gls{#1}^s}
\def\res{\mathcal{S}}
\def\lmax#1{\lambda_{\rm max}(#1)}
\def\lmin#1{\lambda_{\rm min}(#1)}
\newtheorem{prop}{Proposition}
\newtheorem{defi}{Definition}
\newtheorem{example}{Example}
\newtheorem{lemma}{Lemma}
\newtheorem{theorem}{Theorem}
\newtheorem{coro}{Corollary}
\newtheorem{remark}{Remark}
\newtheorem{prob}{Problem}
\def\red#1{#1}
\def\blue#1{#1}
\title{On the Maximization of Real Sequences}
\author{Assalé Adjé\footnote{mail: assale.adje@univ-perp.fr}\\ LAboratoire de Modélisation et Pluridisciplinaire et Simulations\\
LAMPS\\ Université de Perpignan Via Domitia\\ France
}
\date{}
\begin{document}
	\maketitle
	
	\begin{abstract}
		In this paper, we study a maximization problem on real sequences. More precisely, for a given sequence, we are interested in computing the supremum of the sequence and an index for which the associated term is maximal. We propose a general methodology to solve this maximization problem. The method is based on upper approximations constructed from pairs of eventually decreasing sequences of strictly increasing continuous functions on $[0,1]$ and scalars in $(0,1)$. Then, we can associate integers with these pairs using the inverse of the functions on $[0,1]$. We prove that such pairs always exist, and one provides the index maximizer. In general, such pairs provide an upper bound for the greatest maximizer of the sequence. Finally, we apply the methodology to concrete examples, including famous sequences such as the logistic, Fibonacci, and Syracuse sequences. We also apply our techniques to norm-based peak computation problems on discrete-time linear systems.   
	\end{abstract}
	
{\bf Keywords}:
		Real sequences; Maximization; Peak computation problems;Sequence upper bounds.
	
	\section{Introduction}
	In this paper, denoting the set of real sequences by $\rr^\nn$, we consider the following computational problem.
	\begin{prob}
		\label{mainpb}
		For an element $u\in\rr^\nn$, compute
		\begin{itemize}
			\item $\displaystyle{\sup_{n\in\nn} u_n}$, \blue{if the supremum is finite};
			\item $\displaystyle{k\in\agmx(u):=\{n\in\nn: u_k=\sup_{m\in\nn} u_m\}}$, \blue{if the set of maximizers is nonempty}.
		\end{itemize}
	\end{prob}
	The objective of this paper is to introduce a theoretical methodology for computing the supremum and an integer maximizer of a real sequence. The main idea developed in this paper is the construction of a formula that provides a finite upper bound for the greatest maximizer of the analyzed sequence. This upper bound allows us to solve Problem~\ref{mainpb} in \blue{a} finite \blue{number of steps}. Indeed, \blue{comparing all terms from index 0 to the integer part of the upper bound suffices} to compute the supremum of the sequence and an integer maximizer. \blue{The smallest possible upper bound must be computed to limit the number of comparisons.}
	
	\blue{The search for an element of an index maximizer of a sequence can be viewed as an abstract integer programming problem. While classical numerical methods (Branch-and-bound, Branch-and-Cut, decompositions, etc.) for integer programming (see e.g.,~\cite{Bestuzheva2025,li2006nonlinear,misener2014antigone,ZHANG2023205}) require finite bounds on the decision variables, real relaxations are useful in particular cases for unconstrained (unbounded) nonlinear integer programs \blue{such} as Problem~\ref{mainpb}. Particular cases \blue{entail imposing} additional conditions on the analyzed sequence. These conditions ensure that the real extension assumes a suitable shape to maximize it. We can impose on the analyzed sequence to be concave-extensible (see, for example, ~\cite{murota2009recent}) (the real extension is concave) or (globally) unimodal (see, for example,~\cite{HASSIN2018795}) (strictly increasing before a real $m$ and strictly decreasing after $m$). Unfortunately, these assumptions are overly restrictive and unsuitable for the problem motivating this article and the examples that we develop later.
	} 
	
	Our motivation for Problem~\ref{mainpb} comes from the study of \emph{generalized peak computation problems} arising in the verification of dynamical systems (see, e.g.,~\cite{adje2015property,miller2021peakdiscrete} in the discrete-time context and~\cite{miller2020peaksafety} in the continuous-time case). Problem~\ref{mainpb} can be viewed as a generalization of \cite{DBLP:journals/jota/Adje21,adje13052025}. A generalized peak computation problem requires two inputs: a dynamical system and an objective function. Generalized peak computation problems encompass several verification problems in dynamical systems. The objective function models the quantity of interest, for which the evolution is analyzed. For example, the objective function can be an  
	energy function~\cite{4303250}, a functional representation of a safe 
	set~\cite{adje2015property,miller2020peaksafety} or a norm~\cite{ahiyevich2018upper,7984147}. Regarding norms, we can find in some references a problem dealing with the computation of an upper bound on the sequences of the norms of the powers of a given matrix (see e.g.,~\cite{dowler2013bounding}). The latter work is not directly motivated by dynamical system verification purposes and improves previous works on the topic based on the Kreiss matrix theorem~\cite{Kreiss1962,Spijker1991,TrefethenEmbree+2020}. References~\cite{ahiyevich2018upper,polyak2018peak,7984147} on norm-based generalized peak computation problems deal with linear systems. Moreover, in~\cite{polyak2018peak,7984147}, the authors only propose lower and upper bounds on the optimal value of the associated optimization problem, whereas in~\cite{ahiyevich2018upper}, an upper bound is proposed. Finally, we find a similar formulation in~\cite{ahmadi2025robust}, where the generalized peak computation problem is called a \emph{robust-to-dynamics optimization} problem. The framework differs from ours in that the authors put an additional constraint. This latter is called an invariance constraint, and it forces the state variables to remain in a set.   
	
	
	In this paper, we address maximization problems for general sequences; thus, the proposed method can be directly applied to generalized peak computation problems in the context of discrete-time systems. The dynamical system input of a generalized peak computation problem is an autonomous time-invariant discrete-time system characterized by a set of initial conditions $\xin$ and a function $T$ that updates the state variables. If we suppose that the size of the vector of state variables is $d$, then we have $\xin\subset \rd$ and $T:\rd \to \rd$. However, the objective function is defined over the state space, that is, a function $\varphi:\rd \to \rr$. A generalized peak computation problem consists in maximizing $\varphi$ over the reachable values set generated from $\xin$ and $T$.
	More formally, a generalized peak computation problem in a discrete-time context is defined from $\xin$, $T$ and $\varphi$ and aims at computing
	\begin{equation}
		\label{infinitepb}
		\sup_{k\in\nn}\sup_{x\in\xin} \varphi(T^k(x))
	\end{equation}
	and, if there exists, an integer $n$ such that 
	\[
	\sup_{x\in\xin} \varphi(T^n(x))=\sup_{k\in\nn}\sup_{x\in\xin} \varphi(T^k(x)).
	\]
	Then writing 
	\begin{equation}
	\label{eq:nudef}
	\forall\,k\in\nn,\ \nu_k:=\sup_{x\in\xin} \varphi(T^k(x)) \text{ and } \nu_{\rm opt}:=\sup_{k\in\nn}\nu_k,
	\end{equation}
	we recover the problem in the form of Problem~\ref{mainpb} where the analyzed sequence is $\nu:=(\nu_k)_{k\in\nn}$.

Despite the personal motivation of the author for generalized peak computation problems, the problem of searching for the maximal term of a sequence also appears as an auxiliary problem for studying well-known sequences. The terminology \emph{maximum excursion value} is used for the famous Syracuse sequences (e.g.,~\cite[Part III]{lagarias2023ultimate} or ~\cite{silva1999maximum}) or in~\cite{prasadestimates} for the Juggler sequences (see~\cite{jugg}). In both cases, this value represents the maximal value of the sequence. Unfortunately, the index that achieves this maximal value (if any) is not considered. In the context of Syracuse sequences and their generalizations, we find the notion of \emph{stopping times}. This notion has similarities with our central notion of truncation index used in this paper to safely truncate the infinite sequence of maximization problems exposed in Equation~\eqref{infinitepb} to a finite sequence.   
	
	Finally, the main contribution of the paper is to propose a general abstract method to solve Problem~\ref{mainpb}. We develop theoretical algorithms to solve Problem~\ref{mainpb} in a finite number of steps. To obtain this number of steps, we construct an upper bound for the greatest integer maximizer. This upper bound is computed from a formula that relies on the upper bounds of the analyzed sequence. These upper bounds are sequences constructed from a sequence $h:=(h_k)_{k\in\nn}$ of strictly increasing and continuous functions on $[0,1]$ and a sequence $\beta:=(\beta_k)_{k\in\nn}$ of elements in $(0,1)$. \blue{If $u$ is the analyzed sequence}, the pair $(h,\beta)$ must satisfy for all $k\in\nn$
	\begin{equation}
	\label{eq:upperbound}
	\blue{u_k\leq h_k(\beta_k^k)}.
	\end{equation}
	For all $k\in\nn$, $h_k$ is strictly increasing and continuous on $[0,1]$ and thus is invertible once it is restricted to $[0,1]$. Then, we can define, for all $k\in\nn$ such that $u_k>h_k(0)$:
	\begin{equation}
	\label{eq:upperboundfct}
	\Fu(k,h,\beta)=\dfrac{\ln(h_{k}^{-1}(u_k))}{\ln(\beta_k)}.
	\end{equation}
	In the paper, we prove that the interesting class of upper bounds consists in eventually decreasing  
	elements $(h,\beta)$ for which the set $\{k\in\nn : u_k>h_k(0)\}\cap [N,+\infty)$ is nonempty, $N$ being the smallest integer from which $(h,\beta)$ is decreasing. We prove that such a pair exists if and only if there exists a term of $u$ greater than the limit superior of $u$. Moreover, the use of eventually decreasing elements $(h,\beta)$ also guarantees that $\Fu(k,h,\beta)$ is an upper bound on the greatest integer of $\agmx(u)$ for all $k\geq N$. We then apply our techniques to well-known sequences: the ratio of two consecutive terms of a Fibonacci sequence, logistic sequences, Syracuse sequences, and the norm-based generalized peak computation problem in the case of discrete-time linear dynamics. We apply the technique developed in~\cite{adje13052025} \blue{based on} quadratic Lyapunov functions and discuss how to \blue{exploit} the sequential approach developed in this paper. Contrary to~\cite{dowler2013bounding,polyak2018peak}, where only lower and upper bounds on the value $\nu_{\rm opt}$ are proposed, \blue{the purpose of this current work is to propose a general abstract method to} compute the exact value of $\nu_{\rm opt}$.

	The paper is organized as follows. Section~\ref{sec:preliminary} is devoted to \blue{the preliminaries including basic results} on the supremum of real sequences\blue{, an overview of the methodology employed, and a discussion of related works}. Section~\ref{mainres} \blue{presents} the main results of the paper. We provide details regarding the useful sets of upper bounds over the analyzed sequence. We also prove that these sets are nonempty in the interesting cases. We \blue{describe} the main properties of the function $\Fu$. Section~\ref{mainres} \blue{ends with} \blue{two} theoretical algorithms \blue{for solving} Problem~\ref{mainpb}. Section~\ref{applis} \blue{is devoted to the} applications of our techniques to concrete sequences. This includes applications to Fibonacci sequences, logistic sequences, Syracuse sequences, and the peak computation problem presented in~\cite{ahiyevich2018upper}.  
	\blue{
	\section{Preliminary discussions}
	\label{sec:preliminary}
	\subsection{Basic results on the supremum of a real sequence}
	We provide useful notations concerning the supremum, the limit superior, and the set of maximizers of a sequence.
		For $u\in\rr^\nn$, we write 
\[		
		\valu{u}:=\sup_{n\in\nn} u_n \text{ and } \ls{u}:=\limsup_{n\to +\infty} u_n.
	\]
	In this paper, we denote by $\Lambda$ the set of real sequences that have a finite supremum as
\[
\Lambda:=\{u\in \rr^\nn : \valu{u}<+\infty\}.
\]
It is easy to see that $u\in \Lambda\iff \ls{u}\in\rr\cup\{-\infty\}$. From this equivalence, we can characterize, for $u\in\Lambda$, $\agmx(u)$ using the tails of $u$ and the elements greater than its limit superior. Hence, we introduce the following notation:
	\[
	 \gsls{u}:=\left\{k\in\nn: u_k>\ls{u}\right\},\ \gs_u:=\left\{k\in\nn: \max_{0\leq j\leq k} u_j>\sup_{j>k} u_j\right\} \text{ and }\ \bigks_u=\inf\gs_u
	\]
	We recall that $\bigks_u<+\infty$ if and only if $\gs_u\neq\emptyset$. 
		\begin{prop}
		\label{argmax}
		If $u\notin \Lambda$ then $\gs_u=\emptyset$ and $\gsls{u}=\emptyset$. If $u\in\Lambda$, then:
		\begin{enumerate}
		\item $\gs_u\neq\emptyset\iff \gsls{u}\neq \emptyset$;
			\item $\gs_u=\emptyset\iff \valu{u}=\ls{u}$.
			\item $\agmx(u)=\emptyset$ or $\agmx(u)$ unbounded $\implies \valu{u}=\ls{u}$;
			\item If $\gs_u\neq\emptyset$ then $\bigks_u=\max\agmx(u)$. 
		\end{enumerate}
	\end{prop}
	}
	\blue{
	\subsection{An overview of the methodology and tools developed in this paper}
	\label{overview}
	\subsubsection{The methodology}
	In this paper, we propose solving Problem~\ref{mainpb} for all sequences $u$ such that the limit superior is not the optimal value of Problem~\ref{mainpb} as required in Proposition~\ref{argmax}. Equivalently, by Proposition~\ref{argmax}, in this case, the analyzed sequence $u$ has a term exceeding its limit superior. The output of our method is the greatest optimal solution of Problem~\ref{mainpb}, $\bigks_u$. Our method can compute the exact optimal value since it computes a maximizer for the sequence $u$.}

\blue{	
The methodology proposed in this paper consists in comparing a finite number of terms of the analyzed sequence to compute its maximum. We must ensure that this finite family of terms contains the maximum of the sequence. 
}

\blue{
For a sequence $u$, when a term $u_k$ is given, we ask the following question: from which index $j$ are the terms of $u$ strictly smaller than $u_k$? First, if the term $u_k$ is strictly greater than the limit superior of $u$, it cannot be the infimum of $u$, and thus this index $j$ exists. Second, $\bigks_u$ is smaller than $j$. Indeed, according to the definition of $j$, $n$ is greater than $j$ implies that $u_n$ is strictly smaller than $u_k$. We can conclude that it is sufficient to compare $u_0,u_1,\ldots,u_j$ to obtain $\valu{u}$.
}

\blue{
The difficulty in the simplicity of the approach lies in computing such an index $j$. This computational problem combines indices and terms of the sequence. Hence, for all sequences $u\in\Lambda$ such that $\gsls{u}$ is nonempty, we need a function capable of converting $u_k$ with $k\in\gsls{u}$ into index $j$ defined above. If $u$ is a positive convergent geometric sequence, the natural logarithm provides a suitable function. Moreover, in this case, $u$ is strictly decreasing and for all $j\geq \ln(u_k)$, we have $u_j<u_k$. However, geometric sequences are extremely restrictive. To be as general as possible, we consider bijections of positive convergent geometric sequences. In addition, these particular sequences serve as upper bounds for the analyzed sequence. This specific form of upper bounds generalizes the approach used in~\cite{adje13052025} for peak computation problems (as recalled in~\eqref{infinitepb}) when $T$ is affine and $\varphi$ is quadratic. In~\cite{adje13052025}, the proposed upper bound has the form $(h(\beta^k))_k$ where $h$ and $\beta$ are constructed from a quadratic Lyapunov function. This construction will be applied to norm-based peak computation problems for linear systems.
}

In this paper, for all $u\in\Lambda$, we prove in Theorem~\ref{mainthmon} that a {\it useful optimal} upper bound $(h(\beta^k))_k$ is available if and only if $\gsls{u}$ is nonempty. Even if this upper bound cannot be constructed in practice, this result responds to the theoretical feasibility of our approach. Because of this impractical side, we accept more general forms as upper bounds, allowing sequences of functions $(h_k)_k$ and of scalars $(\beta_k)_k$ instead of a single function $h$ and a single scalar $\beta$. This leads to the inequalities arising in~\eqref{eq:upperbound}. Finally, we impose, at least, an {\it eventually decreasing} constraint on these sequences to guarantee that they produce an upper bound for the greatest optimal solution of Problem~\ref{mainpb}. 

From the inequalities presented in~\eqref{eq:upperbound}, we can compute a pertinent index $j$ as described above. A pair $(h,\beta)$ allows us to construct the functional $\Fu$ presented in~\eqref{eq:upperboundfct}. This functional provides our desired indices $j$ for a suitable choice of $(h,\beta)$. Then, we can design algorithms to solve Problem~\ref{mainpb} in finite time. The number of comparisons required to solve Problem~\ref{mainpb} depends on the quality of the overapproximation in~\eqref{eq:upperbound}. The inequalities~\eqref{eq:upperbound} must not be too coarse to obtain a small gap between $\Fu(k,h,\beta)$ and the greatest maximizer of Problem~\ref{mainpb}.

\blue{We introduce additional materials to provide further details on the methodology. First, we insist on the fact that to convert the inequalities appearing in~\eqref{eq:upperbound} into indices, the functions $h_k$ must be invertible at least on the open interval $(0,1)$. Moreover, they must preserve some strict inequalities. Hence, we impose that $h_k$ be strictly increasing and surjective on $[0,1]$.  As a strictly increasing function is surjective on an interval if and only if it is continuous on it, we introduce the following set of functions: 
	\[
	\fset:=\{f:\rr\mapsto \rr : f \text{ strictly increasing and continuous on } [0,1]\}.
	\]
Every $f\in\fset$ admits an inverse on $[0,1]$ defined from $[f(0),f(1)]$ to $[0,1]$. For the sake of simplicity, we will denote it by $f^{-1}$. 
}
\blue{
Now, we can formally define the set of eligible sequences as upper bounds for the analyzed sequence. Using the standard notation $\fset^\nn$ for the sequences of elements of $\fset$, we define the following set of upper bounds for $u\in\rr^\nn$:
	\[
	\funset(u):=\left\{(h,\beta)\in \fset^\nn \times (0,1)^\nn: u_k\leq h_k(\beta_k^k),\ \forall\, k\in\nn\right\}. 
	\]
	We sometimes need to separate the functional and scalar parts of an element in $\funset(u)$ and introduce the following two sets:
	\[ 
	\secfunh(u):=\left\{h\in \fset^\nn : \exists\, \beta\in (0,1)^\nn \text{ s.t. } (h,\beta)\in \funset(u)\right\}
	\]
	and for $h\in\secfunh(u)$:
	\[ 
	\secfunb(u,h):=\left\{\beta\in (0,1)^\nn : (h,\beta)\in \funset(u)\right\}.
	\] 
	From the elements of $\funset(u)$, we can derive a functional called {\bf the upper bound functional}.   
	\begin{defi}[Upper bound Functional]
		\label{upperboundfun}
		Let $u\in\rr^\nn$. The upper bound functional is $\Fu:\nn\times \fset^\nn\times (0,1)^\nn\mapsto \rr_+\cup\{+\infty\}$ defined as follows: for all $k\in\nn$, $h\in\fset^\nn$ and $\beta\in (0,1)^\nn$, we have: 
		\begin{equation}
			\label{mainformulaaux}
			\Fu(k,h,\beta):=\left\{
			\begin{array}{lr}
				\dfrac{\ln(h_k^{-1}(u_k))}{\ln(\beta_k)} & \text{ if } (h,\beta)\in \funset(u)\text{ and } u_k>h_k(0)\\
				+\infty & \text{otherwise}
			\end{array}\right.
		\end{equation}
	\end{defi}
	Definition~\ref{upperboundfun} completes the one provided in Equation~\eqref{eq:upperboundfct}. The functional $\Fu$ is set to $+\infty$ for the elements of $\fset^\nn\times (0,1)^\nn$ outside $\funset(u)$ and for the elements of $\funset(u)$ for which $u_k\leq h_k(0)$ \blue{for all $k\in\nn$}. The existence of an integer $k$ such that $u_k>h_k(0)$ fully depends on the choice of the sequence $(h_k)_k$. Consequently, we consider sequences for which there exists an integer $k$ such that $u_k>h_k(0)$. Therefore, we introduce the notion of \textbf{usefulness}.
	\begin{defi}[Useful sequence of strictly increasing continuous functions]
		\label{useful}
		Let $u\in\rr^\nn$. A sequence of functions $h\in \secfunh(u)$ is said to be {\bf useful} for $u$ if the set
		\begin{equation}
			\label{residual}
			\res(u,h):=\{k\in\nn : u_k>h_k(0)\}
		\end{equation}
		is nonempty. 		
		By extension, a pair $(h,\beta)\in\funset(u)$ is {\bf useful} for $u$ if $h$ is useful for $u$.
	\end{defi} 
}
\blue{
From the introduction of our basic tools, we provide a graphical overview of the methodology in Figure~\ref{fig:overview}. This shows how the tools are connected and how they are used in the algorithms. We explain the meaning of each arrow that appears in Figure~\ref{fig:overview} as follows.
}

\begin{figure}[h]
	\fbox{
		\begin{minipage}{0.98\textwidth}
			\begin{center}
				\begin{tikzpicture}
					\node[draw,text width=5.5cm,text centered] at (-3.5,4) {\begin{tabular}{c}$\exists\, (h,\beta)\in \fset\times (0,1)$ s.t.\\ \begin{tabular}{l} $\bullet\, \forall\, k\in\nn,\ u_k\leq h(\beta^k)$ \\ $\bullet\, \exists\, n\in\nn,\ u_n>h(0)$\end{tabular}\end{tabular}};
					\node[draw,text width=2cm,text centered] at (5.5,0.2) {$\Fu(\cdot,h,\beta)$};
					\node[draw,text width=3cm,text centered] at (1.5,-1.5) {Algorithms};
					\node[draw,text width=6cm,text centered] at (1.5,-4) {The greatest optimal solution of Problem~\ref{mainpb}: $\bigks_u$};
					\node[draw,text width=2.5cm,text centered] at (4.7,4) {$\gsls{u}\neq \emptyset$};
					\node[draw,text width=4cm,text centered] at (-3,1.2) {useful $(h,\beta)\in\funset(u)$};
					\draw[ultra thick,line width=1pt,double,<-] (-0.4,4) -- (3,4);
					\draw[ultra thick,line width=1pt,double,->] (-2,1.7) -- (4,3.5);
					\draw[ultra thick,dashed,->] (-3,0.7) -- (1.1,-1);
					\draw[ultra thick,dashed,->] (-0.7,1.1) -- (4.3,0.4);
					\draw[ultra thick,dashed,->] (5.3,-0.3) -- (1.9,-1);
					\draw[ultra thick,->] (1.5,-1.9) -- (1.5,-3.5);
					\draw[ultra thick,dashed,->] (-3,3) -- (-3,1.6);
					\draw (1.3,4) node[above]{\textcircled{\small 1}};
					\draw (1.3,2.6) node[below]{\textcircled{\small 1}};
					\draw (-3,2.3) node[left]{\textcircled{\small 2}};
					\draw (2,0.7) node[above]{\textcircled{\small 3}};
					\draw (-1.4,-0.1) node[below]{\textcircled{\small 4}};
					\draw (3.9,-0.6) node[below]{\textcircled{\small 4}};
					\draw (1.5,-2.6) node[left]{\textcircled{\small 5}};
				\end{tikzpicture}
			\end{center}
		\end{minipage}
	}
	\caption{An overview of the methodology and tools employed to solve Problem~\ref{mainpb}}
	\label{fig:overview}
\end{figure}

\blue{
\noindent \textcircled{\small 1} In Theorem~\ref{mainthmon}, it will be proved that there exists a {\it useful} sequence upper bound if and only if the limit superior of the sequence is not the optimal value of Problem~\ref{mainpb}.
}

\blue{
\noindent \textcircled{\small 2} Theorem~\ref{mainthmon} relies on Theorem~\ref{funatt} to establish the existence of a useful constant upper bound. In Theorem~\ref{funatt}, we prove that the underlying function can be chosen to be affine. However, this affine function cannot be constructed constructed. In practice, as will be shown in the examples, it will be simpler and more natural to construct a non affine function or a non constant sequence as an upper bound of the analyzed sequence. This explains how we allow sequences of functions in $\fset$ rather than a single function in $\fset$ to construct our upper bounds.
}

\blue{
\noindent \textcircled{\small 3} When an element $(h,\beta)$ of $\funset(u)$ is provided, we can construct $\Fu(\cdot,h,\beta)$. However, the functional $\Fu(\cdot,h,\beta)$ must have suitable properties to be integrated in algorithms to solve Problem~\ref{mainpb}. To obtain these desirable properties, the sequences $(h,\beta)$ must be at least {\it eventually decreasing}.   		  
}

\blue{
\noindent \textcircled{\small 4} We propose two algorithms. They differ from the properties of the input $(h,\beta)$; whether they are eventually decreasing (the most general case) or constant. This has an impact on the index $k$ from which we can use $\lfloor\Fu(k,h,\beta)\rfloor$ to compute an upper bound of $\bigks_u$. It also influences the index $k$ for which $\lfloor\Fu(k,h,\beta)\rfloor$ is minimal.
}

\blue{
\noindent \textcircled{\small 5} Our two algorithms are proven to be sound and provide an optimal solution to Problem~\ref{mainpb} in finite time.
}	
\blue{	
	\subsection{Related works}
	\subsubsection{Integer programming approaches}
	Problem~\ref{mainpb}, in its general form, can be viewed as an unconstrained univariate nonlinear integer maximization problem. For such problems, applying existing methods directly from unconstrained integer programming requires either discrete convexity properties (e.g.,  \cite{murota2009recent}) or a unimodal property (e.g., \cite{HASSIN2018795}). In general, nonlinear integer programming problems are computationally difficult to solve without one of these properties (e.g., \cite{Hemmecke2010}). A generic methodology for solving unconstrained nonlinear integer programs is not proposed; instead, specific reformulation approaches tailored to the problem structure are preferred.  
}	

\blue{
	Imposing on our sequence $u$ to be concave-extensible is too restrictive in our case. To be concave-extensible means that we can extend $u$ to a (classical) concave function in $\rr$. This is the same as satisfying a discrete concavity inequality, that is, $u(k+1)+u(k-1)\leq 2u(k)$ for all $k\in\nn$. The examples proposed in Section~\ref{applis} are not concave-extensible. Moreover, with respect to our motivating problem, the sequence $\nu$ defined in~\eqref{eq:nudef} generally fails to have a concave extension. Note that the construction or even the study of the real extension of the integer function $k\mapsto \nu_k$ is a very difficult challenge. Indeed, it boils down to solving functional equations. One possibility to obtain the real extension, for all $x\in\rd$, of $k\mapsto \varphi(T^k(x))$ is to solve a  Schröeder equation (see e.g., \cite{10.11650/twjm/1500407524}) which is, unfortunately, a purely theoretical tool. Precisely, a Schröeder equation (for $T$) is written as $T=H^{-1}\circ M \circ H)$ where the unknowns are $H$ and $M$; $H$ being invertible and $M$ being a full rank matrix. If the Schröeder equation has a solution for all $x\in\rd$, a real extension of $k\mapsto \varphi(T^k(x))$ is $t\mapsto \varphi(H^{-1}(M^t Hx)$; $M^t$ being constructed from a Jordan decomposition. 
}

\blue{
Unimodal (real) functions (see, for example, \cite{HASSIN2018795}) generalize strictly concave functions admitting a maximizer. These are defined as follows. A real function $f$ is said to be unimodal if there exists one real $m$ such that $f$ strictly increases before $m$ and strictly decreases after. This makes $m$ the only local (and thus global) maximizer of $f$. In fact, this approach is used for global optimization problems but can be applied to the real extension of integer programs. For these extensions, the integer maximizer is either $\lfloor m\rfloor$ or $\lfloor m\rfloor+1$. Again, the examples proposed in this paper and the problem motivated therein do not belong to the class of unimodal objective functions.  
}

\blue{	
	We should also mention the class of Mixed Integer Non-Linear Programs (MINLP) (see, for example, \cite{misener2014antigone}). Clearly, the maximization problem associated with~\eqref{infinitepb} falls into this class. The problems in this class are known to be NP-Hard. Hopefully, numerical solvers exist. The current tools require that integer decision variables lie within a feasible bounded set. This is not the case for our maximization problem. However, for a sequence $u$, for well-chosen pairs $(h,\beta)\in\funset(u)$, $\Fu(\cdot,h,\beta)$ computes an upper bound for the integer decision variable of our maximization problem. Therefore, once $\Fu(\cdot,h,\beta)$ is computed, the search for the maximizer can be performed on $[0,\Fu(\cdot,h,\beta)]\cap \nn$. Integer programming solvers can be useful to compute a maximizer and the optimal value of Problem~\ref{mainpb} after obtaining $\Fu(\cdot,h,\beta)$. This complementary approach is left for future works. 
	}
	\blue{
	\subsubsection{On discrete-time peak computation problems}
	Particular discrete-time peak computation problems arise in~\cite{ahiyevich2018upper,dowler2013bounding,polyak2018peak,7984147}. These problems deal with with linear systems. In~\cite{ahiyevich2018upper,dowler2013bounding,7984147}, the authors deal with the maximization of the norms of powers of a given matrix, whereas in~\cite{polyak2018peak}, the maximal absolute value of the terms of an $n$-th order linear system is studied.  In~\cite{ahmadi2025robust}, the authors study discrete-time peak computation problems for switched linear systems and linear objective functions. In~\cite{adje13052025}, affine systems with quadratic objective functions are considered. 
	Discrete-time peak computation problems also appear in a more general form in~\cite{miller2021peakdiscrete}. In these problems, the dynamics $T$ is piecewise polynomial, and the objective function is a polynomial. 
}	
\paragraph{Norm-based peak computation problems on linear systems}
\blue{
In~\cite{ahiyevich2018upper,dowler2013bounding,7984147}, the authors are interested in the computation of the maximum of the sequences of the Euclidean norm of the powers of a given stable matrix. In these papers, the authors propose an upper bound for the optimal peak value. In~\cite{ahiyevich2018upper,7984147}, this upper bound is computed using a semidefinite program solving a discrete Lyapunov equation. In~\cite{dowler2013bounding}, this upper bound is derived a direct computation of the norm of the powers of the matrix using a Schur decomposition of the matrix. Schur decomposition allows us to compute the powers of the matrix by means of the powers of a similar upper triangular matrix. In~\cite{dowler2013bounding,7984147}, a lower bound for the optimal peak value is also proposed. In~\cite{dowler2013bounding}, this lower bound still comes from the Schur decomposition and the powers of the upper triangular matrix mathematically similar to the one analyzed. In~\cite{7984147}, the lower bound is computed for matrices in companion form.
}

\blue{
In~\cite{polyak2018peak}, the authors consider high-order linear real sequences for which the absolute values of the terms is analyzed. In~\cite{polyak2018peak}, the authors propose a closed form for the optimal peak value and the peak instant for the very specific case where the (Schur stable) characteristic polynomial associated with the linear coefficients has all roots that are real and identical. If the roots are real, the authors construct lower and upper bounds on the optimal peak value from lower and upper bounds on the roots of the characteristic polynomial.
}

\blue{
These approaches provide useful information quickly and do not require evaluations of the Euclidean norm of the matrix powers or computing the terms. In the current paper, we apply our technique to the maximization of the Euclidean norm of the matrix powers. The application consists in a slight reformulation of our approach developed in~\cite{adje13052025}. It provides the exact optimal value and the power that reaches the maximum value. Regarding the comparison with~\cite{polyak2018peak}, we can ask whether the upper bound proposed by the authors can be formulated as elements of $\funset(\cdot)$. In future work, we will consider specific derivations of our method for high-order linear systems. 
}

\paragraph{Piecewise linear peak computation problems}
\blue{
In~\cite{ahmadi2025robust}, Ahmadi and Günlük propose to solve peak computation problems when the dynamics $T$ is piecewise linear (i.e., a switched linear system) and has a joint spectral radius strictly less than one. The objective function $\varphi$ is linear. The authors' problem has a supplementary constraint: the states of the system must evolve within an invariant set. The authors compute the optimal value of their peak computation problem in finite time when the invariant is compact. This optimal value is the smallest optimal value of a finite number of semidefinite programs. The authors' approach focuses on numerical methods based on semidefinite programming. The numerical approaches presented in~\cite{DBLP:journals/jota/Adje21,adje13052025} have similarities concerning the use of semidefinite programming and the definition of truncation indices. These approaches differ from those used in our framework. This is mainly due to the presence of the (compact) invariant in~\cite{ahmadi2025robust}. In~\cite{DBLP:journals/jota/Adje21,adje13052025}, compactness is implicit. Indeed, the linear system is stable and thus has a bounded reachable values set. The purpose of the current paper is not to propose a numerical tool. The objective of the paper is to identify sufficient and necessary tools to solve Problem~\eqref{mainpb} in finite time.
}

\paragraph{Polynomial-based peak computation problems}

\blue{	
	Miller et al~\cite{miller2021peakdiscrete} develop an approach based on a discrete Liouville measure equation reformulation of peak computation problems. Specialization to polynomials allows us to reformulate the optimization problem over measures as an optimization problem with linear matrix inequality constraints (duals of semidefinite programs). 
}

\blue{	
The authors' framework integrates invariance and uncertainty into their peak optimization problems. On the one hand, this appears to address a wider class of peak computation problems; on the other hand, the computational aspects require the invariant and uncertainty sets to be compact. Finally, the authors also propose an upper bound for the peak computation problem, which is not necessarily tight. The compactness assumption makes a real difference between our approach and theirs. Their motivation is also different, as they propose a numerical tool to solve peak computation problems.  
Again, the comparison between the current paper and this approach cannot be entirely fair as the motivations differ. Miller et al propose a numerical tool, whereas the purpose of the current paper is a theoretical investigation. Currently, a fully automatic tool is available for solving peak computation problems for stable affine systems with quadratic objective functions~\cite{adje13052025}. In the current paper, the computational aspects are not addressed, and we assume that we know or can compute the exact values of the terms of the analyzed sequence. In practice, in peak computation problems, obtaining the exact values of $\nu_k$ is computationally challenging and relies on closed-form expressions for $\varphi\circ T^n$ or $T^n(\xin)$.
}

	\section{Resolution of Problem~\ref{mainpb} from an upper approximation of the analyzed sequence}
	\label{mainres}
	\blue{
	In this section, we provide technical details concerning the resolution of Problem~\ref{mainpb} from the elements in $\funset(\cdot)$. First, we refine the set of the upper bound candidates by adding an eventually decreasing constraint. Next, we prove that the existence of a useful eventually decreasing upper bound is equivalent to the nonemptyness of $\gsls{\cdot}$. We study the properties of $\mathfrak{F}_{\cdot}$ to guarantee the soundness of the approach. Finally, we present \blue{two} algorithms to solve Problem~\ref{mainpb}. The construction of these algorithms depends on the decreasing property of the chosen pair $(h,\beta)\in\funset(\cdot)$. 
	}
	\subsection{\blue{The set of eventually decreasing upper bound sequences}}	
	\blue{
	We begin our theoretical investigations by proving that $\mathfrak{F}$ is well-defined and satisfies interesting properties in a general setting. First, we prove that the upper bound functional at $k$ is always greater than $k$. This result is important since it implies that $\mathfrak{F}_{\cdot}(\bigks_{\cdot},h,\beta)\geq \bigks_{\cdot}$. 
	}
	
	\blue{
	\begin{prop}
	\label{prop:simpleFuprop}
	Let $u\in\rr^\nn$ and $(h,\beta)\in \funset(u)$ be useful. Let $k\in\res(u,h)$. The following statements hold.
		\begin{enumerate}
			\item $\Fu(k,h,\beta)$ is well-defined and strictly positive if $k\neq 0$ and $\Fu(0,h,\beta)=0$ if and only if $u_0=h_0(1)$;
			\item the equalities hold:
			\[
			\lfloor \Fu(k,h,\beta)\rfloor+1 =\min\{j\in\nn : h_k(\beta_k^j)< u_k\}
			=\min\{j\in\nn : \beta_k^j< h_k^{-1}(u_k)\};
			\]
			\item $\Fu(k,h,\beta)\geq k$.
	\end{enumerate}			
	\end{prop}
}
	\begin{proof}
\blue{
		\emph{1.} Let $(h,\beta)\in \funset(u)$. As $h\in\secfunh(u)$ and we only consider $u_k>h_k(0)$, we have $h_k^{-1}(u_k)>0$. We also have, if $k\in\nn^*$, $u_k<h_k(1)$ meaning that $h_k^{-1}(u_k)\in (0,1)$. If $u_0<h_0(1)$, then the strict inequality $u_k<h_k(1)$ is also valid at $k=0$. As for all $k\in\nn$, $\beta_k\in (0,1)$, $\ln(h_k^{-1}(u_k))$ and $\ln(\beta_k)$ are both strictly negative, and the result holds. It is easy to see that $u_0=h_0(1)$ iff $\Fu(0,h,\beta)=0$.
}

\blue{
\emph{2.} As $(h,\beta)\in \funset(u)$ and $k\in\res(u,h)$, there exists $j\in\nn$ such that $h_k(\beta_k^j)<u_k$. Then the sets
		$\{j\in\nn : h_k(\beta_k^j)< u_k\}$ and $\{j\in\nn : \beta_k^j< h_k^{-1}(u_k)\}$ are equal and nonempty. Let $j\in\nn$ s.t. $\beta_k^j< h_k^{-1}(u_k)$. Applying the natural logarithm and dividing by $\ln(\beta_k)<0$ yields $j> \Fu(k,h,\beta)$. As $j\in\nn$, we have $j\geq \lfloor \Fu(k,h,\beta)\rfloor+1$. 
		Since $\beta_k\in(0,1)$, $\ln(\beta_k)(\lfloor \Fu(k,h,\beta)\rfloor+1)<\Fu(k,h,\beta)\ln(\beta_k)=\ln(h_k^{-1}(u_k))$ and $\beta_k^{\lfloor \Fu(k,h,\beta)\rfloor+1}<h_k^{-1}(u_k)$. Hence $\lfloor \Fu(k,h,\beta)\rfloor+1$ belongs to $\{j\in\nn : \beta_k^j< h_k^{-1}(u_k)\}$ and is its minimum.
}

\blue{
\emph{3.} As $(h,\beta)\in\funset(u)$ and $k\in\res(u,h)$, we have $0<h_k^{-1}(u_k)\leq \beta_k^k<1$. Applying the natural logarithm and dividing by $\ln(\beta_k)<0$ leads to the result.
}
		\end{proof}
	\blue{
	In fact, the set $\funset(\cdot)$ is too large to be used to construct the functional $\Fgen$ even if it is restricted to useful pairs. In Example~\ref{faussexam}, we prove that we always construct an element in $(h,\beta)\in\funset(u)$ for any sequence $u$ with a supremum in $\rr_+^*\cup\{+\infty\}$. \blue{Unfortunately}, we show that the associated functional $\Fu(\cdot,h,\beta)$ fails to systematically produce an upper bound of $\bigks_u$ for all indices $k\in\res(u,h)$.
	}

\blue{
\begin{remark}
		The second statement of Proposition~\ref{prop:simpleFuprop} suggests that $\lfloor\Fu(k,h,\beta)\rfloor+1$ is an extension of the concept of stopping times on positive integers~\cite{Terras1976} or the first passage time~\cite{tao2022almost} that appears in the study of Syracuse sequences. Indeed, $\lfloor\Fu(k,h,\beta)\rfloor+1$ is the smallest integer $j$ such that $h_k(\beta_k^j) < u_k$ whereas $u_k\leq h_k(\beta_k^k)$ as $(h,\beta)\in\funset(u)$.
		\end{remark}
}		
		
	\begin{example}
		\label{faussexam}
		Let $u\in\rr^\nn$ such that $\valu{u}\in\rr_+^\blue{*}\cup \{+\infty\}$. We define for all $k\in\nn$:
		\begin{equation}
			\label{examfond}
			h_k:x\mapsto \left(\frac{|u_k|+1}{|u_k|+0.5}\right)^k|u_k| x \text{ and } \beta_k:=\left(\frac{|u_k|+0.5}{|u_k|+1}\right).
		\end{equation}
		For all $k\in\nn$, the functions $h_k$ are clearly in $\fset$ and $\beta_k\in (0,1)$. Moreover, we have for all $k\in\nn$, $u_k\leq h_k(\beta_k^k)$ and in particular we have $u_k=h_k(\beta_k^k)$ for all $k\in\nn$ such that $u_k\geq 0$. Let us define $h:=(h_k)_k$ and $\beta:=(\beta_k)_k$. We have for all $k\in \nn$, $h_k(0)=0$. As $\valu{u}$ is supposed to be strictly positive, there exists $k\in\nn$ such that $u_k>0$ and so $\res(u,h)\neq \emptyset$. Hence, for all $k\in\nn$ such that $u_k>0$, we have
		\[
		\Fu(k,h,\beta)=\dfrac{\ln h_k^{-1} (u_k)}{\ln \beta_k}=k.  
		\]
		\blue{Note that the third statement of Proposition~\ref{prop:simpleFuprop} holds. However}, we cannot guarantee that $\Fu(k,h,\beta)$ is an upper bound for $\bigks_u$ for all $k\blue{\in\res(u,h)}$.
		\qed
	\end{example}
	In Example~\ref{faussexam}, we see that we must restrict the choice of upper bound candidates. Next, we will see that a good choice is to pick {\bf eventually decreasing} elements of $\funset(u)$.
	
	We use the standard functional order on $\fset$, that is, for $f,g\in\fset$, $f\leq g \iff f(x)\leq g(x)$ for all $x\in [0,1]$. We also use the standard weak order on $\fset\times (0,1)$, that is,  for all $(f,\beta), (g,\gamma)\in \fset\times (0,1)$, $(f,\beta)\leq (g,\gamma) \iff f\leq g $ and $\beta\leq \gamma$. 
	
Then, \blue{for $u\in\rr^\nn$, we introduce} the set $\fmonfung(u)$ of eventually decreasing elements of $\secfunh(u)$, that is,
	\[
	\fmonfung(u):=\{h\in\secfunh(u): \exists\, N\in \nn\ \forall\, n\geq N,\ h_{n+1}\leq h_n\}.
	\]
	For $h\in\fmonfung(u)$, we denote the smallest index from which $h$ is decreasing by $\mon(h)$. \blue{Clearly, the set $\fmonfung(u)$ also contains the decreasing (from index 0) elements of $\secfunh(u)$. From our notation, the decreasing elements of $\secfunh(u)$ are the elements of $\fmonfung(u)$ that satisfy $\mon(h)=0$.}
	
	\blue{Now, considering both functional and scalar parts, we denote by $\fmong(u)$ the set of} all eventually decreasing elements of $\funset(u)$, that is,
	\[
	\fmong(u):=\{(h,\beta)\in\secfunh(u): \exists\, N\in \nn\ \forall\, n\geq N,\ (h_{n+1},\beta_{n+1})\leq (h_n,\beta_n)\}.
	\]
	For $(h,\beta)\in \fmong(u)$, we denote the smallest index from which $h$ and $\beta$ are decreasing by $\mon(h,\beta)$. \blue{Again, $\fmong(u)$ contains the decreasing elements of $\funset(u)$ and which are the elements of $\fmong(u)$ such that $\mon(h,\beta)=0$.}
	
\blue{As mentioned in Subsection~\ref{overview}, we will prove that if $\gsls{u}$ is nonempty, we can construct a useful constant sequence that provides an upper bound of $u$. In the sequel, we allow eventually constant sequences instead of constant sequences. Thus, we denote by $\fcste(u)$ the set of {\bf eventually constant pairs} in $\funset(u)$}:
\[
\fcste(u):=\{(h,\beta)\in \funset(u) : \exists\, (g,\gamma,N)\in\fset\times (0,1) \times\nn\ \forall\, n\geq N,\  (h_n,\beta_n)=(g,\gamma)\}.
\]
For $(h,\beta)\in\fcste(u)$, we denote by $\cst(h,\beta)$ the smallest index from which the sequence $(h,\beta)$ is constant. \blue{Similarly to $\fmonfung(u)$ and $\fmong(u)$, the constant (from index 0) sequences of $\funset(u)$ are the elements of $\fcste(u)$ such that $\cst(h,\beta)=0$.}

	\begin{remark}
		We insist on the fact that in the definition of being eventually decreasing, both $h$ and $\beta$ are decreasing after $\mon(h,\beta)$. Let $n$ be greater than $\mon(h,\beta)$. As the functional order is used for $h$, we have $h_{n+1}(\beta_{n+1}^{n+1})\leq h_n(\beta_{n+1}^{n+1})$. Since $h_n$ is strictly increasing and $\beta_{n+1}\in (0,1)$, we have $h_n(\beta_{n+1}^{n+1})< h_n(\beta_{n+1}^n)$. Finally, as $\beta_{n+1}\leq \beta_n$ and $h_n$ is increasing, we get $h_n(\beta_{n+1}^n)\leq h_n(\beta_n^n)$. We conclude that $h_{n+1}(\beta_{n+1}^{n+1})<h_n(\beta_n)$. 
		
		We could ask if it suffices only to require the weaker condition: $h_{\blue{n+1}}(\beta_{n+1}^{\blue{n+1}})\leq h_n(\beta_n^n)$ for all $n\geq \mon(h,\beta)$. However, in the definition of the upper bound functional (Definition~\ref{upperboundfun}), $(h_n)_{n\in\nn}$ is decoupled from $(\beta_n)_{n\in\nn}$. The weaker condition would not be sufficient to compare $\Fu(k,h,\beta)$, $\Fu(\bigks_u,h,\beta)$ and $\bigks_u$ for $k\leq \bigks_u$. 
	\end{remark}
	
	We extend the usefulness to the eventually decreasing elements. Now, we require that $\res(u,h)$ contains an integer greater than $\mon(h)$.   
	\begin{defi}[Usefully decreasing]
		Let $u\in\rr^\nn$. An element $h\in\fmonfung(u)$ (resp. $(h,\beta)\in \fmong(u)$) is {\bf usefully decreasing} if and only if $\res(u,h)\cap [\mon(h),+\infty)\neq \emptyset$ (resp. $\res(u,h)\cap [\mon(h,\beta),+\infty)\neq \emptyset$).
	\end{defi}
	An (\blue{decreasing}) element \blue{$h\in\fmonfung(u)$ such that $\mon(h)=0$ (resp. $(h,\beta)\in \fmong(u)$ such that $\mon(h,\beta)=0$)} is usefully decreasing if and only if it is useful \blue{according to} Definition~\ref{useful}. 
	
	We will see in the algorithms developed in Subsection~\ref{subsec:algo} that decreasing pairs will be easier to manage practically than eventually decreasing  pairs. It is obvious that decreasing pairs provide eventually decreasing pairs. Now, we prove that we can always construct a decreasing pair from an eventually decreasing pair. Later, we will prove that this procedure can increase the value of $\Fu$. 
	\begin{prop}
		\label{mongtomon0}
		Let $u\in\rr^\nn$. The following equivalence holds: 
		\[\fmong(u) \neq \emptyset\iff \blue{\exists\, (g,\gamma)\in\fmong(u) \text{ such that } \mon(g,\gamma)=0}.\] Moreover, there exists $(h,\beta)\in\fmong(u)$ usefully decreasing if and only if there exists a useful $(g,\gamma)\in\fmong(u)$ such that $\mon(g,\gamma)=0$. 
	\end{prop}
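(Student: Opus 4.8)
The plan is to prove the two statements in sequence, since the first is essentially the special case of the second obtained by dropping the usefulness requirement. For the first equivalence, the direction $\fmon(u)\subseteq\fmong(u)$ is trivial, so the content is $\fmong(u)\neq\emptyset\implies\fmon(u)\neq\emptyset$. Given $(h,\beta)\in\fmong(u)$ with $N:=\mon(h,\beta)$, I would construct $(g,\gamma)\in\fmon(u)$ by freezing the pair at index $N$: set $g_n:=h_n$ and $\gamma_n:=\beta_n$ for $n\geq N$, and for $n<N$ set $g_n:=h_N$ and $\gamma_n:=\beta_N$. Then $(g_{n+1},\gamma_{n+1})\leq(g_n,\gamma_n)$ for every $n\in\nn$: for $n\geq N$ this is the eventual-decrease hypothesis, for $n<N-1$ both sides are equal to $(h_N,\beta_N)$, and for $n=N-1$ it is again an equality. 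It remains to check $(g,\gamma)\in\funset(u)$, i.e. $u_n\leq g_n(\gamma_n^n)$ for all $n$. For $n\geq N$ this is inherited from $(h,\beta)$. For $n<N$, I need $u_n\leq h_N(\beta_N^n)$; since $\beta_N\in(0,1)$ and $n<N$ we have $\beta_N^n\geq\beta_N^N$, and $h_N$ is increasing, so $h_N(\beta_N^n)\geq h_N(\beta_N^N)\geq u_N$. This does not immediately dominate $u_n$, so the honest fix is to additionally replace $g_n$ for $n<N$ by $x\mapsto h_N(x)+c_n$ with $c_n:=\max(0,u_n-h_N(\beta_N^n))$, which keeps $g_n\in\fset$, keeps $g_n\geq h_N=g_N$ (hence the monotonicity at $n=N-1$ still holds, as $c_n\geq 0$ makes $g_{N-1}\geq g_N$), and ensures $u_n\leq g_n(\gamma_n^n)$. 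The translations only affect finitely many indices below $N$, so decrease from $N$ onward is untouched; a routine check confirms $g_{n}\geq g_{n+1}$ on $\{0,\dots,N-1\}$ can be arranged by taking $c_n$ nonincreasing, e.g. $c_n:=\max_{n\leq m<N}\max(0,u_m-h_N(\beta_N^m))$.

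For the second statement, the nontrivial direction is: given $(h,\beta)\in\fmong(u)$ usefully decreasing, produce $(g,\gamma)\in\fmon(u)$ useful. I would apply the same freezing-at-$N$ construction with $N:=\mon(h,\beta)$. By hypothesis $\res(u,h)\cap[N,+\infty)\neq\emptyset$, so fix $k\geq N$ with $u_k>h_k(0)$. In the construction above, $g_k=h_k$ (since $k\geq N$), so $g_k(0)=h_k(0)<u_k$, whence $k\in\res(u,g)$ and $g$ is useful in the sense of Definition~\ref{useful}; as noted in the excerpt, for a decreasing element usefulness and usefully decreasing coincide. The converse direction ($(g,\gamma)\in\fmon(u)$ useful $\implies$ some usefully decreasing element of $\fmong(u)$) is immediate because $\fmon(u)\subseteq\fmong(u)$ with $\mon(g,\gamma)=0$, so $\res(u,g)\cap[0,+\infty)=\res(u,g)\neq\emptyset$ already witnesses usefully decreasing.

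The main obstacle is the verification that the frozen-and-translated pair still lies in $\funset(u)$ while preserving both the weak-order decrease at the seam $n=N-1$ and the usefulness witness. The tension is that $\funset(u)$ membership for small $n$ may force $g_n$ to be \emph{large}, while monotonicity at the seam wants $g_{N-1}\geq g_N=h_N$ — fortunately both pull in the same direction, since adding a nonnegative nonincreasing constant $c_n$ raises $g_n$ above $h_N$ without disturbing anything at indices $\geq N$. I would present the construction explicitly, then check membership, the global weak-order decrease, and the usefulness transfer as three short bullet-free paragraphs, emphasizing that nothing at or beyond $N$ is modified so the witness index $k$ survives intact.
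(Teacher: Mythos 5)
Your proof is correct. It follows the same overall strategy as the paper -- patch the finitely many indices below $N=\mon(h,\beta)$, leave the tail alone, and observe that a usefulness witness $k\geq N$ survives because nothing at the witness index is modified -- but the concrete patch differs. The paper replaces the head by the pointwise maxima $\max_{i=0,\ldots,N} h_i$ and $\max_{i=0,\ldots,N}\beta_i$ on all indices $n\leq N$; this makes membership in $\funset(u)$ automatic (the max dominates each original term $h_n(\beta_n^n)$) and gives monotonicity at the seam since the max includes index $N$. You instead freeze the pair at $(h_N,\beta_N)$ and then, having honestly noticed that $h_N(\beta_N^n)$ need not dominate $u_n$ for $n<N$, repair membership with the nonincreasing additive constants $c_n$; this is slightly more laborious but equally valid, and it buys a small bonus: since your construction leaves index $N$ itself untouched, a usefulness witness at exactly $n=N$ transfers verbatim, whereas the paper's max-construction also modifies $h_N$, so its asserted identity $h_n(0)=g_n(0)$ is only guaranteed for $n>N$ (a minor gloss your route avoids, since with the max-construction one has $g_N(0)=\max_{i\leq N}h_i(0)\geq h_N(0)$ and a witness sitting exactly at $N$ would need a word of justification).
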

	\begin{proof}
		Suppose there exists $(h,\beta)\in\fmong(u)$. Let us write $N=\blue{\mon(h,\beta)}$. Then, we can define for all integers $n\blue{<} N$, $\blue{g_n=\max_{i=0,\ldots,N} h_i}$ and $\blue{g_n=h_n}$ for all integers $n\blue{\geq}N$. Similarly, we define, for all $n\blue{<} N$, $\blue{\gamma_n=\max_{i=0,\ldots,N} \beta_i}$ and for all $n\blue{\geq}N$, $\blue{\gamma_n=\beta_n}$. Then \blue{$(g,\gamma)\in\fmong(u)$ and $\mon(g,\gamma)=0$}. \blue{The reverse implication is clear.}
		
		Clearly, if there exists a \blue{useful $(g,\gamma)\in\fmong(u)$ such that $\mon(g,\gamma)=0$}, then it is also eventually decreasing and usefully decreasing. Now suppose that there exists $(h,\beta)\in\fmong(u)$ usefully decreasing, the construction of $(g,\gamma)$ from above is useful as for some $n\geq \mon(h,\beta)$, $u_n>g_n(0)=h_n(0)$.
	\end{proof}
	
	\subsection{Existence of useful upper bound sequences}
	We have presented the important sets of upper bound sequences and their basic properties. Before developing the main properties of the upper bound functional $\Fu$ defined in Definition~\ref{upperboundfun}, we show how the existence of useful constant elements and usefully decreasing elements are related to the nonemptiness of $\gsls{u}$.
	
	First, as mentioned in Example~\ref{faussexam}, the set $\funset(u)$ is always nonempty for all $u\in\rr^\nn$. Second, we can easily characterize $\Lambda$ from $\secfunh(u)$ and describe a simple situation in which $u_0$ is the maximum of $u$.
	\begin{prop}
		\label{nonemptyfun}
		Let $u\in\rr^\nn$. The following statements hold:
		\begin{enumerate}
			\item $\funset(u)\neq \emptyset$.
			\item $u\in\Lambda$ if and only there exists $h\in\secfunh(u)$ such that the set $\{h_k(1) : k\in\nn\}$ is bounded from above.
			\item if there exists \blue{$h\in\fmonfung(u)$ such that $\mon(h)=0$} and $u_0=h_0(1)$ then $u\in\Lambda$ and $u_k<u_0$ for all $k\in\nn^\blue{*}$.
		\end{enumerate}
	\end{prop}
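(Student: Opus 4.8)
The plan is to dispatch the three items in turn, each by an elementary argument; the only genuinely new ingredient is an explicit ``affine'' witness in $\funset(u)$, which does double duty in item~2.

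\emph{Item 1.} I would just exhibit a pair. Fix $\beta_k:=\tfrac12$ for every $k\in\nn$ and let $h_k$ be the affine map $x\mapsto x+u_k-\beta_k^k$. Each $h_k$ is continuous and strictly increasing on $[0,1]$ (indeed on all of $\rr$), so $h_k\in\fset$, and $h_k(\beta_k^k)=u_k$; hence $(h,\beta)\in\funset(u)$ and $\funset(u)\neq\emptyset$.

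\emph{Item 2.} First I would record the elementary observation already invoked in the surrounding text: for any $h\in\secfunh(u)$ with an associated $\beta\in(0,1)^\nn$, we have $\beta_k^k<1$ for $k\geq 1$ and $\beta_0^0=1$, so strict monotonicity of $h_k$ gives $u_k\leq h_k(\beta_k^k)<h_k(1)$ for $k\geq 1$ and $u_0\leq h_0(1)$. For the backward implication, if $M:=\sup_{k\in\nn}h_k(1)<+\infty$, these inequalities force $u_k\leq M$ for all $k$, whence $\valu u\leq M<+\infty$, i.e.\ $u\in\Lambda$. For the forward implication, assume $u\in\Lambda$ and reuse the affine witness of item~1: then $h_k(1)=1+u_k-\beta_k^k\leq 1+\valu u$, so $\{h_k(1):k\in\nn\}$ is bounded above.

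\emph{Item 3.} Let $h\in\fmonfun(u)$ and pick $\beta$ with $(h,\beta)\in\funset(u)$. Since $h_{n+1}\leq h_n$ for every $n$, an immediate induction gives $h_k\leq h_0$ in the functional order, hence $h_k(1)\leq h_0(1)=u_0$ for every $k$. Combining this with the strict bound from item~2 yields $u_k<h_k(1)\leq u_0$ for every $k\in\nn^*$. In particular $u_0=\sup_{k\in\nn}u_k=\valu u<+\infty$, so $u\in\Lambda$ (and, as a by-product, $0\in\agmx(u)$).

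\emph{On the difficulty.} There is no real obstacle: the whole statement is essentially bookkeeping around the inequality $u_k\leq h_k(\beta_k^k)$ and the strict monotonicity of the $h_k$. The only points deserving a line of care are (i) verifying that the chosen witness functions genuinely lie in $\fset$ (continuity \emph{and} strict monotonicity on $[0,1]$), and (ii) in item~3, keeping the index range straight — the strict inequality $u_k<u_0$ is for $k\in\nn^*$, the value $u_0$ being attained at $k=0$.
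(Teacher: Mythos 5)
Your proof is correct and follows essentially the same route as the paper: exhibit an explicit pair in $\funset(u)$ for item 1 and for the forward direction of item 2, use $u_k\leq h_k(\beta_k^k)<h_k(1)$ (for $k\geq 1$) for the backward direction, and combine this strict bound with $h_k\leq h_0$ for item 3, reading the conclusion as $u_k<u_0$ for $k\in\nn^*$ exactly as the paper's own proof does. The only difference is the choice of witness: the paper reuses the pair from its Example~1 for item 1 and the constant sequence built from $x\mapsto x+\valu{u}$ for item 2, while you use a single affine non-constant pair for both, which changes nothing of substance.
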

	\begin{proof}
		\itshape{1.} 
		The pair $(h,\beta)$ proposed in~\eqref{examfond} belongs to $\funset(u)$ even if $u\notin\Lambda$ \blue{or} $\valu{u}$ is not strictly positive.
		
		\itshape{2.} Let us take $h\in\secfunh(u)$ such that $\{h_k(1): k\in\nn\}$ is bounded from above.\blue{Then, we get } $u_k\leq h_k(\beta_k^k) < h_k(1)$. \blue{This} implies that $\valu{u}\leq \sup \{h_k(1): k\in\nn\}<+\infty$. On the contrary, if $u\in\Lambda$, then the constant sequence $h$ defined from $f:x\mapsto x+\valu{u}$ belongs to $\secfunh(u)$. \blue{Moreover, we have} $\{h_k(1): k\in\nn\}=\{\valu{u}+1\}$.
		
		\itshape{3.} Assume that there exists $h\in\secfunh(u)$ decreasing such that $u_0=h_0(1)$. Then for some $\beta\in (0,1)^\nn$, for all $k\in\nn^*$, $u_k\leq h_k(\beta_k^k) < h_k(1)\leq h_0(1)=u_0$.
	\end{proof}
	\begin{remark}
		\label{zeroisopt}
		The third statement of Proposition~\ref{nonemptyfun} implies, if it holds, that the term $u_0$ is the supremum of the analyzed sequence $u$. Moreover, index 0 is the only maximizer of the sequence.
	\end{remark}
	
	We can go further than the results of Proposition~\ref{nonemptyfun} to completely characterize $\Lambda$ by using the eventually decreasing upper bounds.
	\begin{prop}
		\label{nonuppbound}
		Let $u\in\rr^\nn$. Then $u\in\Lambda\iff \fmonfung(u)\neq \emptyset$.
	\end{prop}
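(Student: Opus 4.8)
The plan is to prove the two implications separately, one by an explicit construction and one by a short monotonicity argument.

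\emph{($\Rightarrow$)} Assume $u\in\Lambda$, so $\valu{u}\in\rr$, and exhibit a member of $\fmonfung(u)$. Take the constant sequence $h=(h_k)_{k\in\nn}$ with $h_k=f$ for every $k$, where $f\colon x\mapsto x+\valu{u}$; then $f\in\fset$. For any $\beta\in(0,1)^\nn$ and any $k\in\nn$ we have $h_k(\beta_k^k)=\beta_k^k+\valu{u}\geq u_k$, because $\beta_k^k>0$ and $u_k\leq\valu{u}$; hence $(h,\beta)\in\funset(u)$ and $h\in\secfunh(u)$. Being constant, $h$ is (trivially) eventually decreasing, so $h\in\fmonfung(u)$, which is therefore nonempty.

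\emph{($\Leftarrow$)} Assume $\fmonfung(u)\neq\emptyset$ and fix $h\in\fmonfung(u)$ together with some $\beta\in(0,1)^\nn$ such that $(h,\beta)\in\funset(u)$. Put $N=\mon(h)$. Since $h_{n+1}\leq h_n$ for every $n\geq N$, an immediate induction on the functional order gives $h_n\leq h_N$ on $[0,1]$ for all $n\geq N$. Therefore, for $n\geq N$, $u_n\leq h_n(\beta_n^n)<h_n(1)\leq h_N(1)$, so $\sup_{n\geq N}u_n\leq h_N(1)<+\infty$; together with the finite quantity $\max_{0\leq k<N}u_k$ this yields $\valu{u}<+\infty$, i.e. $u\in\Lambda$. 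Equivalently, one may observe that the set $\{h_k(1):k\in\nn\}$ is bounded above — it consists of the finitely many numbers $h_k(1)$ with $k<N$ together with values all $\leq h_N(1)$ — and then invoke the second statement of Proposition~\ref{nonemptyfun}.

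I do not expect a genuine obstacle. The direct implication is just the observation, already used in the proof of Proposition~\ref{nonemptyfun}, that the shift-by-$\valu{u}$ function is a valid constant upper-bound candidate; the reverse implication only needs that eventual monotonicity of $h$ forces a single uniform pointwise upper bound $h_N$ over the whole tail of $u$, hence a finite supremum. The only mild point of care is transferring ``$h_{n+1}\leq h_n$ for $n\geq N$'' to ``$h_n\leq h_N$ for all $n\geq N$'' via the functional order on $\fset$, which is routine.
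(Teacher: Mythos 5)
Your proof is correct and follows essentially the same route as the paper: the forward direction uses the same constant shift-by-$\valu{u}$ upper bound from Proposition~\ref{nonemptyfun}, and the reverse direction rests on the same key fact that eventual decrease makes $\{h_k(1):k\in\nn\}$ bounded above (you argue it directly, the paper by contraposition via the second statement of Proposition~\ref{nonemptyfun}). The only nitpick is that your strict inequality $h_n(\beta_n^n)<h_n(1)$ fails at $n=0$ where $\beta_0^0=1$, but since only the non-strict bound is needed this is harmless.
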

	
	\begin{proof}
		Assume that $u\in\Lambda$. The constant sequence function in the proof of the second statement of Proposition~\ref{nonemptyfun} belongs to $\fmonfung(u)$. Now suppose that $u\notin\Lambda$ and $\fmonfung(u)\neq \emptyset$. Using the second statement of Proposition~\ref{nonemptyfun}, for all $h\in\secfunh(u)$, the set $\{h_k(1):k\in\nn\}$ is unbounded from above. Taking $g\in\fmonfung(u)$, we have for all $k\in\nn$, $g_{k}(1)\leq \max \{g_i(1): i=0,\ldots,\mon(g)\}\in\rr$. This contradicts the second statement of Proposition~\ref{nonemptyfun} and thus, $\fmonfung(u)$ is empty.
	\end{proof}
	
	Theorem~\ref{funatt} is the main result of this section. First, it justifies the finiteness of the upper bound functional $\Fu$ for $u\in\Lambda$ with $\gsls{u}\neq \emptyset$. Second, it serves to construct constant and non constant elements in $\funset(u)$. Finally, the existence of a useful and optimal function is important for the optimality of $\Fu$.
	
	\begin{theorem}[Existence of a useful optimal affine function] 
		\label{funatt}
		Let $u\in\Lambda$ such that $\gsls{u}\neq \emptyset$. Then, there exist $a>0$, $b\in (0,1)$ and $c\in\rr$ such that:
		\begin{enumerate}
			\item $\{k\in\nn : u_k>c\}\neq \emptyset$;
			\item $u_k\leq a b^k+c$ for all $k\in\nn$;
			\item $u_{\bigks_u}=ab^{\bigks_u}+c$.
		\end{enumerate}
	\end{theorem}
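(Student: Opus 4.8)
The plan is to reduce the statement to a single affine upper bound of the form $k\mapsto ab^k+c$ that is tight at the index $\bigks_u$, exploiting what $\gsls{u}\neq\emptyset$ tells us about the tail of $u$. By Proposition~\ref{posetdelta}, $\gsls{u}\neq\emptyset$ is equivalent to $\gs_u\neq\emptyset$, so $\bigks_u<+\infty$, and by the fifth statement of Proposition~\ref{argmax} we have $\bigks_u=\max\agmx(u)$; in particular $u_{\bigks_u}=\valu{u}$. Write $M:=\valu{u}=u_{\bigks_u}$. The key structural fact I would extract is that there exists an integer $N>\bigks_u$ and a real $c<M$ with $u_k\le c$ for all $k>\bigks_u$; indeed, since $\bigks_u\in\gs_u$ we have $\sup_{j>\bigks_u}u_j<\max_{0\le j\le\bigks_u}u_j=M$, so one may set $c:=\max\{\,0,\ \tfrac12(M+\sup_{j>\bigks_u}u_j)\,\}$ if $M>0$ (and handle $M\le0$ separately, see below) — this $c$ satisfies $c<M$ and $u_k\le c$ for every $k>\bigks_u$, while $u_{\bigks_u}=M>c$ gives item~1.

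Next I would construct $a$ and $b$. For the indices $k=0,1,\dots,\bigks_u$ there are only finitely many, so I need $ab^k+c\ge u_k$ there, with equality forced at $k=\bigks_u$: the equality constraint reads $ab^{\bigks_u}=M-c>0$, so once $b\in(0,1)$ is chosen, $a:=(M-c)b^{-\bigks_u}>0$ is determined. It then remains to choose $b\in(0,1)$ close enough to $1$ so that simultaneously (i) $ab^k+c\ge u_k$ for all $k\le\bigks_u$, and (ii) $ab^k+c\ge u_k$ for all $k>\bigks_u$. For (ii), since $u_k\le c\le ab^k+c$ automatically (as $a>0$, $b^k>0$), nothing more is needed — this is exactly why $c$ was chosen $\ge\sup_{j>\bigks_u}u_j$. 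For (i), I would argue by continuity: the map $b\mapsto (M-c)b^{k-\bigks_u}$ is continuous on $(0,1]$ and equals $M-c$ at $b=1$ for every $k$; since there are finitely many $k<\bigks_u$ and $M-c\ge u_k-c$ would fail only if $u_k>M$, which is impossible because $M=\valu{u}$, at $b=1$ each inequality $(M-c)b^{k-\bigks_u}\ge u_k-c$ holds (with the possibility of equality when $u_k=M$). A little care is needed since $k-\bigks_u<0$ makes $b^{k-\bigks_u}$ \emph{decreasing} in $b$, so shrinking $b$ below $1$ only \emph{increases} the left side — hence in fact $b=1-\varepsilon$ works for \emph{every} $\varepsilon\in(0,1)$, and one may even just reason at a fixed $b$ like $b=1/2$. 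I would double-check this monotonicity direction carefully, then fix, say, $b=1/2$, $a=2^{\bigks_u}(M-c)$.

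Finally I would dispose of the sign issue: the statement requires $a>0$ and $b\in(0,1)$ but places no sign restriction on $c$, yet the later use of $\Fu$ needs $h_k(0)=c<u_{\bigks_u}$, which is item~1 and holds regardless of sign. If $M=\valu{u}\le 0$, the same construction works verbatim with $c:=\tfrac12(M+\sup_{j>\bigks_u}u_j)$, which is still $<M\le 0$ and still $\ge\sup_{j>\bigks_u}u_j$; the computations of $a$ and $b$ are unchanged since only $M-c>0$ matters. I expect the main obstacle to be purely bookkeeping: getting the direction of monotonicity of $b\mapsto b^{k-\bigks_u}$ right for $k<\bigks_u$ versus $k>\bigks_u$ and confirming that a single choice of $b$ handles all finitely many small indices at once; there is no real analytic difficulty, and the whole argument is elementary once Propositions~\ref{posetdelta} and~\ref{argmax} have pinned down $\bigks_u=\max\agmx(u)$ and the strict gap $\sup_{j>\bigks_u}u_j<\valu{u}$.
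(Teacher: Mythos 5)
Your proof is correct, but it takes a genuinely different and more elementary route than the paper. The paper fixes an \emph{arbitrary} $c\in(\ls{u},\valu{u})$; such a $c$ only guarantees $u_k\le c$ for $k\ge N_c$ with $N_c$ possibly much larger than $\bigks_u$, so the paper must still dominate the terms $u_k$ for $k\in(\bigks_u,N_c]$, which it does via a convexity/tangent-line argument: it sets $\gamma:=\max_{k\in(\bigks_u,N_c]}(u_{\bigks_u}-u_k)/(\bigks_u-k)<0$ and chooses the unique $(a,b)$ with $ab^{\bigks_u}+c=u_{\bigks_u}$ and $f'_{a,b}(\bigks_u)=\gamma$, namely $b=\exp\bigl(\gamma/(u_{\bigks_u}-c)\bigr)$ and $a=(u_{\bigks_u}-c)\exp\bigl(-\bigks_u\gamma/(u_{\bigks_u}-c)\bigr)$, so that the tangent at $\bigks_u$ separates the exponential from the sequence on that middle range. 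You instead exploit $\bigks_u\in\gs_u$ (so $s:=\sup_{j>\bigks_u}u_j<\valu{u}$) to place $c$ strictly between $s$ and $\valu{u}$, which makes the whole tail $k>\bigks_u$ trivial ($u_k\le s\le c<ab^k+c$), and then, since $b^{k-\bigks_u}\ge1$ for $k\le\bigks_u$, \emph{any} $b\in(0,1)$ with $a=(\valu{u}-c)b^{-\bigks_u}$ works — your monotonicity direction is right, and the max with $0$ and the sign split on $\valu{u}$ are harmless but unnecessary ($c=\tfrac12(\valu{u}+s)$ works in all cases). The trade-off: your argument is shorter and fully suffices for the theorem as stated (it only asserts existence of some triple), whereas the paper's construction proves the stronger fact that $a,b$ can be found for \emph{every} $c\in(\ls{u},\valu{u})$ — flexibility it later uses (e.g. in the Syracuse application, where $c$ is forced to lie in $(4,5]$ close to $\ls{y}=4$) and which produces an exponential whose slope at $\bigks_u$ matches the sequence's actual decay, hence tighter bounds from $\Fu$.
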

	
	\begin{proof}
		If $\gsls{u}$ is nonempty then $u_{\bigks_u}=\valu{u}>\ls{u}\in\rr\cup\{-\infty\}$ and for all $t\in (\ls{u},\valu{u})$ there exists 
		$k\in\nn$, such that $u_k>t$. So let $c$ be in $(\ls{u},\valu{u})$. Now as $c>\ls{u}$, there exists $N_c\in\nn$ s.t. $\sup_{k\geq N_c} u_k\leq c$. This is equivalent to $u_k\leq c$ for all $k\geq N_c$. We conclude that for all $k\geq N_c$, $u_k\leq c+ab^k$ for all $a>0$ and $b\in (0,1)$. Note that as $c<\valu{u}$ and $\bigks_u$ is the greatest element of the argmax of $u$, then $N_c$ is strictly greater than $\bigks_u$. Now, let us introduce the following set: 
		\[
		{AB}_c:= \left\{(a,b)\in\rr_+^* \times (0,1): u_{\bigks_u}=\valu{u}=a b^{\bigks_u}+c\right\}.
		\] 
		It is easy to see that ${AB}_c$ is nonempty, indeed, for all $b$ in $(0,1)$, $(\frac{u_{\bigks_u}-c}{b^{\bigks_u}},b)\in {AB}_c$. Moreover, if $\bigks_u>0$, for all $(a,b)\in {AB}_c$,
		for all $0<k< \bigks_u$, $u_k\leq u_{\bigks_u}=a b^{\bigks_u}+c<a b^k+c$ as $b\in (0,1)$. We also have $u_0\leq u_{\bigks_u}=ab^{\bigks_u}+c\leq a+c$ as $b\in (0,1)$. Hence, the result $u_k\leq ab^k+c$ holds on the sets $[0,\bigks_u]\cap \nn$ and $[N_c,+\infty)\cap\nn$ for every $(a,b)\in {AB}_c$. It suffices to find $(a,b)\in {AB}_c$, such that $u_k\leq ab^k+c$ for all $k\in (\bigks_u,N_c]\cap \nn$. Recall that we have $\bigks_u+1\leq N_c$.
		
		Let us write $(-\infty,0)\ni\gamma:=\max_{k\in (\bigks_u,N_c]\cap\nn} (u_{\bigks_u}-u_k)/(\bigks_u-k)$. This cannot be zero since $\bigks_u$ is the maximal integer $k$ such that $u_k=\max_{n\in\nn} u_n$. Now, let $(a,b)\in \rr_+\times (0,1)$ and define $f_{a,b}:\rr\ni y\mapsto a b^y+c$. The function $f_{a,b}$ is derivable, strictly convex and $f'_{a,b}(y)=ab^y\ln(b)$ for all $y\in\rr$. If $f'_{a,b}(\bigks_u)=\gamma$ and $(a,b)\in {AB}_c$ then, by the convexity of $f_{a,b}$, the tangent of $f$ at $y=\bigks_u$ satisfies for all $k\in\nn$, $f(k)\geq f'(\bigks_u)(k-\bigks_u)+f(\bigks_u)$ which is the same as $ab^k+c\geq \gamma (k-\bigks_u)+u_{\bigks_u}$. \red{From the} definition of $\gamma$, we have for all $k\in (\bigks_u,N_c\red{]\cap \nn}$, $(\bigks_u-k)\gamma\leq u_{\bigks_u}-u_k$ and so $u_k\leq u_{\bigks_u}+(k-\bigks_u)\gamma$. We conclude that if $f'_{a,b}(\bigks_u)=\gamma$ and $(a,b)\in {AB}_c$, $u_k\leq ab^k+c$ for all $k\in (\bigks_u,N_c]\red{\cap\nn}$. Finally, we look for $(a,b)\in \rr_+\times (0,1)$ such that:
		\[
		f_{a,b}(\bigks_u)=u_{\bigks_u} \text{ i.e. } (a,b)\in {AB}_c\text{ and } f'_{a,b}(\bigks_u)=\gamma.
		\]
		From $(a,b)\in {AB}_c$, we get $ab^{\bigks_u}=u_{\bigks_u}-c$. Hence, $f'_{a,b}(\bigks_u)=\gamma$ if and only if $\ln(b)=\frac{\gamma}{u_{\bigks_u}-c}<0$. So $b=\exp(\frac{\gamma}{u_{\bigks_u}-c})\in (0,1)$ and $a=(u_{\bigks_u}-c)\exp(-\frac{\bigks_u\gamma}{u_{\bigks_u}-c})>0$.
	\end{proof}
	
	In the proof of Theorem~\ref{funatt}, we exhibit the parameters of the affine function. Unfortunately (but naturally), all the parameters depend on $\bigks_u$ and $u_{\bigks_u}$ \red{which} is exactly what we want to compute. Theorem~\ref{funatt} allows \red{us} to construct an affine function in $\secfunh(u)$ \red{that} is optimal. Optimal means that at the index $\bigks_u$, the upper bound $ab^{\bigks_u}+c$ is \blue{tight i.e.,} equal to $u_{\bigks_u}$. Hence, we obtain $\Fu(\bigks_u,x\mapsto ax+c,b)=\bigks_u$.
	
	\begin{coro}
		\label{maincoro}
		Let $u\in\Lambda$ such that $\gsls{u}\neq \emptyset$. Then, there exists a pair $(h,\beta)\in\fmong(u)$  such that $u_{\bigks_u}=h_{\bigks_u}(\beta^{\bigks_u})$.
	\end{coro}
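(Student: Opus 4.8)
The plan is to take the affine function produced by Theorem~\ref{funatt} and turn it into an eventually decreasing \emph{sequence} of elements of $\fset\times(0,1)$ that still upper-bounds $u$ term-by-term, and whose value at the index $\bigks_u$ agrees with $u_{\bigks_u}$. So first I would invoke Theorem~\ref{funatt} to obtain $a>0$, $b\in(0,1)$ and $c\in\rr$ with $u_k\le ab^k+c$ for all $k\in\nn$ and $u_{\bigks_u}=ab^{\bigks_u}+c$, together with the fact (used in that proof) that $c<\valu{u}=u_{\bigks_u}$.

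Next I would construct the pair $(h,\beta)$. The natural first attempt is the constant sequence $h_k:x\mapsto ax+c$ and $\beta_k=b$; this lies in $\fcst(u)\subseteq\fmon(u)\subseteq\fmong(u)$ since $x\mapsto ax+c$ is strictly increasing and continuous on $[0,1]$ (indeed on all of $\rr$), $b\in(0,1)$, and $h_k(\beta_k^k)=ab^k+c\ge u_k$ for all $k$. Then $h_{\bigks_u}(\beta^{\bigks_u})=ab^{\bigks_u}+c=u_{\bigks_u}$, which is exactly the required equality. Since constant sequences are trivially (eventually) decreasing, $(h,\beta)\in\fmong(u)$ and we are done; no further manipulation is needed. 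I should also double-check the mild notational point that the statement writes $\beta^{\bigks_u}$ with a scalar $\beta$: with $\beta_k\equiv b$ this is just $b^{\bigks_u}$, consistent with the $\Fu$ formula, and one might prefer to phrase the conclusion as $u_{\bigks_u}=h_{\bigks_u}(\beta_{\bigks_u}^{\bigks_u})$.

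There is essentially no obstacle here: the whole content has been pushed into Theorem~\ref{funatt}, and this corollary is just the observation that an optimal affine upper bound, read as a constant pair in $\funset(u)$, automatically sits in $\fmong(u)$. The only thing worth spelling out is why $x\mapsto ax+c\in\fset$ — namely $a>0$ gives strict monotonicity and affine functions are continuous — and why the constant pair qualifies as eventually decreasing, which follows from the inclusions $\fcst(u)\subseteq\fmon(u)\subseteq\fmong(u)$ recorded in the text. If one wanted a genuinely (not merely eventually) decreasing pair, the same argument already delivers it, since the constant pair lies in $\fmon(u)$ as well; the weaker claim $\fmong(u)$ is stated presumably only because that is all the subsequent development requires.
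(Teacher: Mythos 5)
Your proposal is correct and is exactly the argument the paper intends: Corollary~\ref{maincoro} is obtained by reading the optimal affine bound $x\mapsto ax+c$ with ratio $b$ from Theorem~\ref{funatt} as a constant pair, which lies in $\fcst(u)\subseteq\fmon(u)\subseteq\fmong(u)$ and satisfies $h_{\bigks_u}(\beta_{\bigks_u}^{\bigks_u})=ab^{\bigks_u}+c=u_{\bigks_u}$ (indeed the paper later uses this very pair, in $\fmon(u)$, in the proof of Theorem~\ref{formulaopt}). No gaps; your notational remark about $\beta^{\bigks_u}$ versus $\beta_{\bigks_u}^{\bigks_u}$ is also apt.
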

	
	Theorem~\ref{funatt} proves that the nonemptiness of $\gsls{u}$ is a sufficient condition for the existence of a usefully decreasing element in $\fmong(u)$. Now, we also prove the necessity. The necessity requires an auxiliary result, \red{which is} formulated in Lemma~\ref{simpleproperty}.
	\begin{lemma}
		\label{simpleproperty}
		Let $u\in\Lambda$. Let $(h,\beta)\in\fmong(u)$ be usefully decreasing. Then:
		\begin{itemize}
			\item the sequence $(\beta_k^k)_{k\in\nn}$ converges to 0 strictly decreasingly from $\mon(h,\beta)$;
			\item for all $n\geq \mon(h,\beta)$, $\ls{u}\leq h_n(0)$.
		\end{itemize}
	\end{lemma}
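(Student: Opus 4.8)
The plan is to exploit the decreasing structure of $(h,\beta)$ past $N:=\mon(h,\beta)$ together with the boundedness coming from $u\in\Lambda$ to pin down the behaviour of $(\beta_k^k)_k$, and then to read off the bound $\ls{u}\le h_n(0)$ from the fact that along a subsequence the upper bounds $h_k(\beta_k^k)$ collapse to the common limiting value $h_n(0)$.

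First I would treat the strict decrease of $(\beta_k^k)_{k\in\nn}$ from $N$ onward. For $n\ge N$ we have $\beta_{n+1}\le\beta_n$ with both in $(0,1)$, hence $\beta_{n+1}^{n+1}\le\beta_n^{n+1}<\beta_n^{n}$ since $\beta_n\in(0,1)$; this is exactly the computation already carried out in the Remark following the definition of eventually decreasing pairs. So $(\beta_k^k)_{k\ge N}$ is strictly decreasing, bounded below by $0$, hence convergent; call its limit $L\ge 0$. To see $L=0$: since $\beta$ is decreasing from $N$, $\beta_k\le\beta_N<1$ for all $k\ge N$, so $0\le\beta_k^k\le\beta_N^{k}\to 0$. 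Therefore $L=0$, giving the first bullet.

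Next, the second bullet. Fix $n\ge N$. Because $h$ is decreasing from $N$, for every $k\ge n$ we have $h_k\le h_n$ on $[0,1]$, and in particular $u_k\le h_k(\beta_k^k)\le h_n(\beta_k^k)$. Now let $k\to+\infty$ along $k\ge n$: by the first bullet $\beta_k^k\to 0$, and $h_n$ is continuous on $[0,1]$, so $h_n(\beta_k^k)\to h_n(0)$. Hence for every $\varepsilon>0$ there is $K\ge n$ with $u_k\le h_n(0)+\varepsilon$ for all $k\ge K$, which gives $\sup_{k\ge K}u_k\le h_n(0)+\varepsilon$ and therefore $\ls{u}=\inf_{m}\sup_{k\ge m}u_k\le h_n(0)+\varepsilon$. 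Letting $\varepsilon\to 0$ yields $\ls{u}\le h_n(0)$.

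I do not expect a serious obstacle here; the only point requiring a little care is making sure the index over which the limit is taken is legitimate, i.e. that we only compare $h_k$ with $h_n$ for $k\ge n\ge N$ where the functional order $h_k\le h_n$ actually holds, and that we invoke continuity of the fixed function $h_n$ (not of the varying $h_k$) when passing to the limit $h_n(\beta_k^k)\to h_n(0)$. Note also that the hypothesis of being \emph{usefully} decreasing is not strictly needed for either bullet — only eventual decrease is used — but it is harmless to keep it, and it is the natural hypothesis under which the lemma will be applied in Theorem~\ref{mainthmon}.
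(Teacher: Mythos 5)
Your proof is correct and follows essentially the same route as the paper's: strict decrease of $(\beta_k^k)_{k\ge\mon(h,\beta)}$ from the monotonicity of $\beta$ and $\beta_n\in(0,1)$, convergence to $0$ via $\beta_k^k\le\beta_N^k$, and then the bound $u_k\le h_k(\beta_k^k)\le h_n(\beta_k^k)$ for $k\ge n$ combined with continuity of the fixed $h_n$ at $0$ to get $\ls{u}\le h_n(0)$. Your side remark that usefulness is not actually needed is accurate (the paper's proof does not use it either), so nothing is missing.
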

	
	\begin{proof}
		Let us write $N=\mon(h,\beta)$. For all $n\geq N$, $0<\beta_{n+1}\leq \beta_n\leq \beta_N<1$ and thus $0<\beta_{n+1}^{n+1}<\beta_{n+1}^n\leq \beta_n^n\leq \beta_N^n$. Taking the limit over $n$ leads to $\ls{\beta}=\lim_{n\to +\infty} \beta_n^n=0$. 
		
		Now let us write, $\omega:=(h_k(\beta_k^k))_{k\in\nn}$. Let $n\geq N$ and define $\zeta:=(h_n(\beta_k^k))_k$. For all $k\geq n$, as $(h,\beta)\in\funset(u)$ and $(h_m)_{m\geq N}$ decreases, $u_k\leq \omega_k\leq \zeta_k$. Taking the $\limsup$ over $k$ and as $h_n$ is (upper semi-)continuous, we get $\ls{u}\leq \ls{\omega}\leq \ls{\zeta}\leq h_n(\ls{\beta})=h_n(0)$. 
	\end{proof}
	\blue{Now, we can completely characterize the nonemptiness of $\gsls{u}$ from the existence of a usefully decreasing elements in $\fmong(u)$. As shown in Figure~\ref{fig:overview}, the methodology that we develop can always be based on inequalities of the form~\eqref{eq:upperbound} and on the upper bound functional $\Fu$.}
	\begin{theorem}
		\label{mainthmon}
		Let $u\in\Lambda$. Then $\gsls{u}\neq \emptyset$ if and only if there exists $(h,\beta)\in\fmong(u)$ usefully decreasing. 
	\end{theorem}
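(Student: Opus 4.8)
The plan is to prove the two implications separately; essentially all the work is already contained in Theorem~\ref{funatt} (together with Corollary~\ref{maincoro}) for sufficiency and in Lemma~\ref{simpleproperty} for necessity, so what remains is to assemble them correctly and to keep track of the distinction between \emph{useful} and \emph{usefully decreasing}.

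For the direction $\gsls{u}\neq\emptyset \Rightarrow$ existence of a usefully decreasing pair, I would invoke Theorem~\ref{funatt} to get $a>0$, $b\in(0,1)$ and $c\in\rr$ with $u_k\leq ab^k+c$ for all $k\in\nn$ and $\{k\in\nn: u_k>c\}\neq\emptyset$. Take $h$ to be the constant sequence all of whose terms equal the affine map $x\mapsto ax+c$ (strictly increasing and continuous on $[0,1]$, hence in $\fset$) and $\beta$ the constant sequence equal to $b$. Then $(h,\beta)\in\fcst(u)\subseteq\fmong(u)$ and $\mon(h,\beta)=0$, while $\res(u,h)=\{k\in\nn: u_k>h_k(0)\}=\{k\in\nn: u_k>c\}\neq\emptyset$; so $(h,\beta)$ is useful, and since $\mon(h,\beta)=0$ it is usefully decreasing. (Corollary~\ref{maincoro} records that such a pair can moreover be chosen tight at $\bigks_u$.)

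For the converse, assume $(h,\beta)\in\fmong(u)$ is usefully decreasing and put $N=\mon(h,\beta)$. By definition $\res(u,h)\cap[N,+\infty)\neq\emptyset$, so there is some $n\geq N$ with $u_n>h_n(0)$. Lemma~\ref{simpleproperty} gives $\ls{u}\leq h_n(0)$ for every $n\geq N$, so $u_n>h_n(0)\geq\ls{u}$, i.e. $n\in\gsls{u}$ and $\gsls{u}\neq\emptyset$.

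There is no genuine obstacle left here: the statement is a repackaging of the previous results. The only points needing care are the bookkeeping ones just mentioned — in the sufficiency direction the witness must be genuinely decreasing (so that $\mon=0$ and "useful" coincides with "usefully decreasing"), which is exactly why the constant affine pair is the right object; and in the necessity direction the index witnessing usefulness must be taken past $\mon(h,\beta)$, which is precisely the hypothesis under which Lemma~\ref{simpleproperty} delivers $\ls{u}\leq h_n(0)$. If one wanted the converse self-contained, the inequality $\ls{u}\leq h_n(0)$ would have to be re-derived: for $n\geq N$ and $k\geq n$, decreasingness of the tail of $h$ gives $u_k\leq h_k(\beta_k^k)\leq h_n(\beta_k^k)$, and taking $\limsup$ over $k$ together with $\beta_k^k\to 0$ and continuity of $h_n$ yields the claim — this is the only computation of substance, and it is already isolated as Lemma~\ref{simpleproperty}.
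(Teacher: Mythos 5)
Your proof is correct and follows essentially the same route as the paper: sufficiency via the constant affine pair supplied by Theorem~\ref{funatt} (an element of $\fcst(u)\subseteq\fmong(u)$ that is useful, hence usefully decreasing since $\mon=0$), and necessity by picking $n\in\res(u,h)\cap[\mon(h,\beta),+\infty)$ and applying Lemma~\ref{simpleproperty} to get $u_n>h_n(0)\geq\ls{u}$. Your write-up is in fact slightly more explicit than the paper's on the sufficiency bookkeeping, but the argument is the same.
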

	\begin{proof}
		Theorem~\ref{funatt} asserts that $\fcste(u)$ is nonempty when $\gsls{u}$ is nonempty, \red{which implies sufficiency}. The necessity is a consequence of Lemma~\ref{simpleproperty}. Indeed, if there exists $(h,\beta)\in\fmong(u)$ usefully decreasing, by definition, we can pick $n\in\res(u,h)\cap [\mon(h,\beta),+\infty)$. Therefore, $u_n>h_n(0)\geq \ls{u}$ meaning that $n\in\gsls{u}$.
	\end{proof}
	Finally, we can ask \red{whether} we \red{can choose} non constant elements in $\funset(u)$ rather than the affine function furnished by Theorem~\ref{funatt}. The answer is positive, and the construction of the sequence is based on Lemma~\ref{existence}.  
	\begin{lemma}
		\label{existence}
		Let $u\in\Lambda$ such that $\gsls{u}\neq \emptyset$. Let us consider the triplet $(a,b,c)\in \rr_+^*\times (0,1)\times \rr$ constructed in Theorem~\ref{funatt}.
		Let us consider:
		\begin{enumerate}
			\item a decreasing sequence $\gamma:=(\gamma_k)_{k\in\nn}$ in $(c,\valu{u})$;
			\item a decreasing sequence $\beta:=(\beta_k)_{k\in\nn}$ in $[b,1)$;
			\item a decreasing sequence $\alpha:=(\alpha_k)_{k\in\nn}\in \fset^\nn$ such that :
			\begin{equation}
				\label{eqalpha}
				\inf_{k\in\nn}\inf_{x\in (0,1]} \dfrac{\alpha_k(x)}{x}\geq a .
			\end{equation}
		\end{enumerate}
		Let us define for all $k\in\nn$, $h_k:[0,1]\ni x\mapsto a_k(x)+\gamma_k$ and $h=(h_k)_{k\in\nn}$. Then, $(h,\beta)\in\fmong(u)$ \blue{with $\mon(h,\beta)=0$} and $(h,\beta)$ is useful.
		
	\end{lemma}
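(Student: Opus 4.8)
The goal is to verify three things about the sequence $h = (h_k)_{k\in\nn}$ with $h_k : x \mapsto \alpha_k(x) + \gamma_k$: that each $h_k$ lies in $\fset$, that $(h,\beta) \in \fmon(u)$ (i.e. both $h$ and $\beta$ are decreasing and the pair upper-bounds $u$), and that $(h,\beta)$ is useful. The plan is to check these in that order, as each is mostly a matter of unwinding the hypotheses.

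First I would check $h_k \in \fset$. Since $\alpha_k \in \fset$, $\alpha_k$ is strictly increasing and continuous on $[0,1]$; adding the constant $\gamma_k$ preserves both properties, so $h_k \in \fset$. Next, the monotonicity of the pair: for all $k$, $\alpha_{k+1} \leq \alpha_k$ (as functions on $[0,1]$) by hypothesis~3 and $\gamma_{k+1} \leq \gamma_k$ by hypothesis~1, hence $h_{k+1} = \alpha_{k+1} + \gamma_{k+1} \leq \alpha_k + \gamma_k = h_k$; and $\beta_{k+1} \leq \beta_k$ by hypothesis~2, so $(h_{k+1},\beta_{k+1}) \leq (h_k,\beta_k)$ in the weak order, giving $h$ decreasing with $\mon(h) = 0$.

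The substantive step is showing $(h,\beta) \in \funset(u)$, i.e. $u_k \leq h_k(\beta_k^k)$ for all $k$. Here I would use the affine bound from Theorem~\ref{funatt}: $u_k \leq a b^k + c$ for all $k$. Now $h_k(\beta_k^k) = \alpha_k(\beta_k^k) + \gamma_k$. By Equation~\eqref{eqalpha}, for any $x \in (0,1]$ we have $\alpha_k(x)/x \geq a$, so $\alpha_k(\beta_k^k) \geq a\,\beta_k^k$ (note $\beta_k^k \in (0,1]$, being a positive power of an element of $(0,1)$; the case $k = 0$ gives $\beta_0^0 = 1$, and $\alpha_k(1) \geq a$ still holds by the same inequality at $x=1$). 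Since $\beta_k \geq b$ and the exponent $k \geq 0$, $\beta_k^k \geq b^k$, hence $\alpha_k(\beta_k^k) \geq a\,\beta_k^k \geq a\,b^k$. Also $\gamma_k > c$ by hypothesis~1. Combining, $h_k(\beta_k^k) = \alpha_k(\beta_k^k) + \gamma_k > a b^k + c \geq u_k$, so indeed $(h,\beta) \in \funset(u)$, and together with the monotonicity above, $(h,\beta) \in \fmon(u)$.

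Finally, usefulness: for an element of $\fmon(u)$, usefully decreasing coincides with useful, so it suffices to exhibit $k$ with $u_k > h_k(0)$. We have $h_k(0) = \alpha_k(0) + \gamma_k = \gamma_k$ (since $\alpha_k \in \fset$ need not satisfy $\alpha_k(0) = 0$ — wait, this requires care). The main obstacle I anticipate is precisely this point: the definition of $\fset$ does not force $\alpha_k(0) = 0$, so $h_k(0) = \alpha_k(0) + \gamma_k$ need not equal $\gamma_k$, and one must argue differently. The clean route is to take $k = \bigks_u$: by the reasoning above applied at this index, and using that $\bigks_u \in \gsls{u}$ so $u_{\bigks_u} = \valu{u} > \gamma_k$ for every $k$ (since $\gamma_k \in (c,\valu{u})$), I would show $u_{\bigks_u} = \valu{u} > \gamma_{\bigks_u} \geq \alpha_{\bigks_u}(0) + \gamma_{\bigks_u} = h_{\bigks_u}(0)$ provided $\alpha_{\bigks_u}(0) \leq 0$; if $\fset$ does not guarantee this, one falls back to Equation~\eqref{eqalpha} being stated on $(0,1]$ only, so $\alpha_k(0)$ is unconstrained, and instead one should choose the witness index so that $u_k > \gamma_k + \alpha_k(0)$ directly — taking $c < \gamma_k$ close enough to $c$ and $\alpha_k(0)$ small. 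I would resolve this by noting that hypothesis~3 gives freedom in the choice of $\alpha$, or more robustly by observing that $\res(u,h) \supseteq \{k : u_k > \alpha_k(0) + \gamma_k\}$ and picking $\gamma$ and $\alpha$ near the extremes allowed; the cleanest fix is simply to note $\gamma_{\bigks_u} < \valu{u} = u_{\bigks_u}$ and that we may assume $\alpha_k(0) \le 0$ by replacing $\alpha_k$ with $x \mapsto \alpha_k(x) - \alpha_k(0)$, which still satisfies \eqref{eqalpha}. This shows $\bigks_u \in \res(u,h)$, so $(h,\beta)$ is useful.
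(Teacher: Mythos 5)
Your first two steps coincide with the paper's proof: each $h_k=\alpha_k+\gamma_k$ is in $\fset$, monotonicity of the pair is immediate from the hypotheses, and the membership $(h,\beta)\in\funset(u)$ is obtained exactly as in the paper, via $\alpha_k(\beta_k^k)\geq a\beta_k^k\geq ab^k$ (from \eqref{eqalpha} and $\beta_k\geq b$) together with $\gamma_k>c$ and the affine bound $u_k\leq ab^k+c$ of Theorem~\ref{funatt}. Up to there the proposal is correct and is essentially the paper's argument.

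The divergence is on usefulness, and there your proposal has a genuine gap. The paper disposes of this step by writing $h_k(0)=\gamma_k<\valu{u}=u_{\bigks_u}$, i.e.\ it implicitly takes $\alpha_k(0)=0$; you are right that the definition of $\fset$ together with \eqref{eqalpha} only forces $\alpha_k(0)\geq 0$ (by continuity), so the point deserves attention. But your repair --- replace $\alpha_k$ by $x\mapsto\alpha_k(x)-\alpha_k(0)$, ``which still satisfies \eqref{eqalpha}'' --- does not work as claimed. First, the shifted function can violate \eqref{eqalpha}: for $\alpha_k(x)=\tfrac{a}{2}(x+1)$ one has $\alpha_k(x)/x\geq a$ on $(0,1]$, yet after subtracting $\alpha_k(0)=\tfrac a2$ the ratio equals $\tfrac a2<a$. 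Second, losing \eqref{eqalpha} endangers precisely the inequality $u_k\leq h_k(\beta_k^k)$, which is tight-sensitive at $k=\bigks_u$ where $u_{\bigks_u}=ab^{\bigks_u}+c$ and the only slack available is $\gamma_{\bigks_u}-c$, not comparable in general with $\alpha_{\bigks_u}(0)$. Third, modifying $\alpha$ changes the pair $(h,\beta)$, so at best you prove the conclusion for a different pair than the one the lemma constructs from the given data. The step that makes the argument close (and that the paper uses silently) is the normalization $\alpha_k(0)=0$ as part of the hypotheses on $\alpha$; with it, $h_k(0)=\gamma_k<\valu{u}=u_{\bigks_u}$ gives $\bigks_u\in\res(u,h)$, and since $(h,\beta)$ is decreasing from index $0$, useful and usefully decreasing coincide, as you noted.
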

	
	\begin{proof}
		We have from for all $k\in\nn$, $\beta_k\in [b,1)$, $\alpha_k(\beta_k^k)\geq a\beta_k^k\geq a b^k$. Thus, from Lemma~\ref{funatt}, 
		$h_k(\beta_k^k)\geq ab^k+c\geq u_k$ and $(h,\beta)\in\funset(u)$. By assumptions on $\alpha$ and $\gamma$, $h$ is clearly decreasing, and we conclude that $h\in\fmonfung(u)$. Finally, $(h,\beta)\in\fmong(u)$. Moreover, since for all $k\in\nn$ $h_k(0)=\gamma_k<\valu{u}=u_{\bigks_u}$, $\bigks_u\in\res(u,h)$ and $h$ is useful.
	\end{proof} 
	
	\begin{prop}
		\label{monfun}
		Let $u\in\Lambda$ such that $\gsls{u}\neq \emptyset$. There exists \blue{a useful $(h,\beta)\in\fmong(u)$ satisfying $\mon(h,\beta)=0$ such that} $h$ and $\beta$ are not constant sequences.
	\end{prop}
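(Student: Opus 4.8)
The plan is to read Proposition~\ref{monfun} as an immediate corollary of Lemma~\ref{existence}: all the structural work is already done there, and what remains is merely to instantiate the three auxiliary sequences $\gamma$, $\beta$, $\alpha$ by \emph{non-constant} admissible choices.

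First I would apply Theorem~\ref{funatt} to obtain a triplet $(a,b,c)\in\rr_+^*\times(0,1)\times\rr$ satisfying its three conclusions. Two features of this triplet are what leave room for non-constancy: from the proof of Theorem~\ref{funatt} one has $c\in(\ls{u},\valu{u})$, so the interval $(c,\valu{u})$ has nonempty interior (here $\valu{u}<+\infty$ because $u\in\Lambda$); and $b<1$, so $[b,1)$ has nonempty interior. These are exactly the slacks the construction will exploit.

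Next I would exhibit explicit strictly decreasing sequences meeting the hypotheses of Lemma~\ref{existence}. For the scalars, take $\gamma_k:=c+\frac{\valu{u}-c}{k+2}\in(c,\valu{u})$ and $\beta_k:=b+\frac{1-b}{k+2}\in[b,1)$; both are strictly decreasing, hence non-constant. For the functions, take $\alpha_k:x\mapsto\bigl(a+\tfrac{1}{k+2}\bigr)x$; each $\alpha_k$ is strictly increasing and continuous on $[0,1]$ so lies in $\fset$, the sequence $(\alpha_k)_k$ is functionally decreasing, and $\alpha_k(x)/x=a+\tfrac{1}{k+2}\ge a$ for every $x\in(0,1]$, so $\inf_{k\in\nn}\inf_{x\in(0,1]}\alpha_k(x)/x\ge a$. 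Setting $h_k:x\mapsto\alpha_k(x)+\gamma_k$ and $h:=(h_k)_{k\in\nn}$, Lemma~\ref{existence} yields $(h,\beta)\in\fmon(u)$ and $(h,\beta)$ useful. Finally, the values $h_k(0)=\gamma_k$ are pairwise distinct, so $h$ is not a constant sequence, and $\beta$ is not constant since $(\beta_k)_k$ is strictly decreasing; this is exactly the asserted conclusion.

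I expect no genuine obstacle here: the existence of the affine envelope and the description of $\fmon(u)$ are already packaged in Theorem~\ref{funatt} and Lemma~\ref{existence}. The only point needing a moment's care is the routine verification that the explicit sequences stay inside the half-open/open intervals $[b,1)$ and $(c,\valu{u})$ and that the normalization $\inf_{k}\inf_{x}\alpha_k(x)/x\ge a$ holds — all of which are satisfied with strict slack, which is precisely what keeps the three sequences non-constant.
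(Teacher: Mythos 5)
Your proof is correct and follows essentially the same route as the paper: the paper's own proof likewise just instantiates Lemma~\ref{existence} with explicit non-constant decreasing choices (there $\alpha_k:x\mapsto x(k+1)^{-1}+a$, $\beta_k=\min\{b+(k+1)^{-1},(b+1)/2\}$, $\gamma_k=\min\{c+(k+1)^{-1},(\valu{u}+c)/2\}$), and your variants satisfy the same hypotheses with the same non-constancy conclusion. No gaps.
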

	\begin{proof}
		It suffices to apply Lemma~\ref{existence} with for all $k\in\nn$, $\alpha_k:x\mapsto x(k+1)^{-1}+a$, $\beta_k=\min\{b+(k+1)^{-1},(b+1)/2\}$ and  $\gamma_k=\min\{c+(k+1)^{-1},(\valu{u}+c)/2\}$.
	\end{proof}
	\blue{Another} possible sequences $\alpha_k$ in Lemma~\ref{existence} can be chosen. \red{For example}, any sequence of functions of the form $\alpha_k:x\mapsto f(k)g(x)+a$ where $f$ is a decreasing strictly positive function and $g$ is a strictly increasing continuous positive function such that $g(x)\geq x^t$ with $t>1$.
	
	\subsection{Main properties of the upper \red{bound functional} $\Fu$}
	\label{mainpropsec}
	Theorem~\ref{mainthmon} and Proposition~\ref{monfun} prove that we can find $(h,\beta)\in\fmong(u)$ when $\gsls{u}$ is nonempty. Hence, $\Fu$ is finite somewhere when $\gsls{u}$ is nonempty. Now, we present the main properties of the upper bound functional $(k,h,\beta)\to\Fu(k,h,\beta)$: the increasing properties of $\Fu$ with respect to $h$ and $\beta$ and the fact that $\Fu(k,h,\beta)$ provides upper bounds for $\bigks_u$.  
	
	\subsubsection{\blue{Increasing} properties of the upper bound functional $\Fu$}
	The \red{functional} $(h,\beta)\to \Fu(\cdot,h,\beta)$ relies on the \blue{sequence of inverses of functions $h_k\in\fset$}. Hence, to study \red{the} increasing properties of $\Fu(\cdot,h,\beta)$ with respect to $h$ we \red{provide} some details on the decreasing property of the application $\fset\ni f\to f^{-1}$. First, we pay attention to the fact that we can only compare the inverses \blue{of elements in $\fset$} on the values at the intersection of their range. \blue{Recall that} for all sets $J$ and all \blue{$\{f_j,j\in J\}\subset \fset$}, we have
\[		
		\blue{\bigcap_{j\in J}[f_j(0),f_j(1)]=\left[\sup_{j\in J} f_j(0),\inf_{j\in J}f_j(1)\right]}.
	\]
	\blue{We will also study the behavior of $\Fu$ with respect to the minima and maxima of finite families of elements of $\fset$. Thus, we develop formulae to compute} the inverses of the minima and maxima of a finite family of elements of $\fset$. For a finite family \blue{$\{f_i: i\in I\}\subset \fset$}, we write
	\begin{equation}
		\label{minmaxfct}
		\blue{\underline{f}_I:[0,1]\ni x\mapsto \min_{i\in I} f_i(x)}\quad \text{ and } \quad 
		\blue{\overline{f}_I:[0,1]\ni x\mapsto \max_{i\in I} f_i(x)}.
	\end{equation}

\blue{
When computing the inverse of $\underline{f}_I$ (resp. $\overline{f}_I$) at some real $y\in \underline{f}_I([0,1])$ (resp. $y\in \underline{f}_I([0,1])$), we must pay attention to the functions $f_i$ for which $y<f_i(0)$ (resp. $f_i(1)<y$). More precisely, these functions must be removed from the computations as the inverse of $f_i$ is not defined at this $y$. Moreover, since the functions $f_i$ and their inverse are strictly increasing, the inverse of "max" becomes "min" and the inverse of "min" is "max".  	
}
	\begin{lemma}
		\label{invcompare}
		Let \blue{$f_1,f_2\in \fset$} and \blue{$\{g_i:i\in I\}$} be a finite family in $\fset$. The following statements hold.
		\begin{enumerate}
			\item If $\blue{\displaystyle{\mathop{\cap}_{i=1,2}}[f_i(0),f_i(1)]}=\emptyset$, \blue{$f_1< f_2\iff f_1(0)<f_2(0)$}.
			\item If $\blue{\displaystyle{\mathop{\cap}_{i=1,2}}[f_i(0),f_i(1)]}\neq\emptyset$, $f_1\leq f_2\iff \forall\, y\in\blue{\displaystyle{\mathop{\cap}_{i=1,2}}[f_i(0),f_i(1)]}$,  $f_2^{-1}(y)\leq f_1^{-1}(y)$.
			\item  The functions \blue{$\underline{g}_I,\overline{g}_I$} belong to $\fset$ and for all $y\in \blue{[\underline{g}_I(0),\underline{g}_{I}(1)]}$ and all $z\in \blue{[\overline{g}_I(0),\overline{g}_{I}(1)]}$:
			\[\blue{\underline{g}_I^{-1}(y)=\max_{\{i\in I:\, g_i(0)\leq y\}} g_i^{-1}(y)}\quad\text{ and
			}\quad \blue{\overline{g}_I^{-1}(z)=\min_{\{i\in I:\, z\leq g_i(1)\}} g_i^{-1}(z)}.\]
		\end{enumerate}
	\end{lemma}
	
	\begin{proof}
	\blue{For $f_1,f_2\in\fset$, we simply write $F:=[f_1(0),f_1(1)]\cap [f_2(0),f_2(1)]$.}
	
		{\itshape 1}. Suppose that \blue{$F=\emptyset$}. If \blue{$f_1(x) < f_2(x)$} for all $x\in [0,1]$, it obviously implies that \blue{$f_1(0)<f_2(0)$}. Now, suppose that \blue{$f_1(0)<f_2(0)$}. As \blue{$F=\emptyset$}, we have for all $x\in [0,1]$ \blue{$f_1(x)\notin [f_2(0),f_2(1)]$}. Thus either, \blue{$f_1(0)>f_2(1)$} or  \blue{$f_1(1)<f_2(0)$}. Then, we exclude \blue{$f_1(0)>f_2(1)$} as \blue{$f_1(0)<f_2(0)$} and \blue{$f_2$} is strictly increasing on $[0,1]$. We conclude that \blue{$f_1(x) \leq f_1(1)< f_2(0)\leq f_2(x)$} for all $x\in [0,1]$ as \blue{$f_1$ and $f_2$ }are strictly increasing on $[0,1]$.
		
		{\itshape 2}. Suppose that \blue{$F\neq \emptyset$}. Assume that \blue{$f_1\leq f_2$} on $[0,1]$. Let \blue{$y\in F$}. As \blue{$y\in [f_2(0),f_2(1)]$}, \blue{$y=f_2(x)$} for some $x\in [0,1]$. Moreover, by assumption, we have \blue{$f_1(x)\leq f_2(x)$}. As \blue{$y\in [f_1(0),f_1(1)]$}, we can apply \blue{$f_1^{-1}$} which is strictly increasing to $f_2(x)=y$. We get \blue{$x\leq f_1^{-1}(f_2(x))$}. As \blue{$x=f_2^{-1}(y)$}, we conclude that \blue{$f_2^{-1}(y)\leq f_1^{-1}(y)$}. 
		
		Now assume that for all \blue{$y\in F, f_2^{-1}(y)\leq f_1^{-1}(y)$}. Let $x\in [0,1]$. We write \blue{$z=f_1(x)$} and thus \blue{$z\in [f_1(0),f_1(1)]$}. Suppose that \blue{$z\in F$}, then \blue{$f_2^{-1}(z)\leq f_1^{-1}(z)$}. Applying \blue{$f_2$}, we \red{obtain} \blue{$z\leq f_2(f_1^{-1}(z))$} which is the same as \blue{$f_1(x)\leq f_2(x)$}. Now suppose that \blue{$z\notin F$} meaning that \blue{$z\notin [f_2(0),f_2(1)]$}. If \blue{$z<f_2(0)$}, then we have \blue{$f_1(x)<f_2(0)\leq f_2(x)$} as \blue{$f_2$} increases on $[0,1]$. Assume now that \blue{$z=f_1(x)>f_2(1)$}. As \blue{$F$} is nonempty, there exists $u\in [0,x)$ such that \blue{$f_1(u)\leq f_2(1)$}. By \red{the} continuity of \blue{$f_1$}, there exists $v\in [u,x)$ such that \blue{$f_1(v)=f_2(1)$}. As \blue{$f_1(v)\in F$}, we have \blue{$f_2^{-1}(f_1(v))\leq f_1^{-1}(f_1(v))=v$} and thus \blue{$f_2^{-1}(f_2(1))=1\leq v<x\leq 1$}. This implies that $z$ cannot \red{be} greater than \blue{$f_2(1)$}. This ends the proof. 
		
		{\itshape 3}. The minimum and maximum of a finite family of strictly increasing continuous functions \red{are} still strictly increasing and continuous; hence \blue{$\underline{g}_I,\overline{g}_I\in\fset$}. Let \blue{$y\in [\underline{g}_I(0),\underline{g}_I(1)]$}. Then there exists $x\in [0,1]$ such that \blue{$y=\underline{g}_I(x)$}. As $I$ is finite, there exists $\ell\in I$ such that \blue{$\underline{g}_I(x)=g_\ell (x)$. It is clear that $y=g_\ell(x)\geq g_\ell(0)$}. Now let \blue{$i\in I$ be such that $g_i(0)\leq y$}. Then \blue{$y\in [g_i(0),\underline{g}_I(1)]\blue{=[g_i(0),g_i(1)]\cap [\underline{g}_I(0),\underline{g}_I(1)]}$}. From the second statement, \blue{$g_i^{-1}(y)\leq \underline{g}_I^{-1}(y)$}. Finally, \blue{$\max_{\{i\in I:\, g_i(0)\leq y\}} g_i^{-1}(y)\leq  \underline{g}_I^{-1}(y)=x=g_\ell^{-1}(y)\leq \max_{\{i\in I:\, g_i(0)\leq y\}} g_i^{-1}(y)$} and we conclude that \blue{$\underline{g}_I^{-1}(y)=\max_{\{i\in I:\, g_i(0)\leq y\}} g_i^{-1}(y)$}. The proof for \blue{$\overline{g}_I^{-1}$} is similar.
	\end{proof}
	
	The set $\fset^\nn$ is naturally endowed with \red{the} standard lattice operations of \red{the} sequence spaces. We recall that for $g,h\in\fset^\nn$, $g\leq h$ iff $g_n(x)\leq h_n(x)$ for all $n\in\nn$ for all $x\in [0,1]$. Hence, \blue{we extend} the notation \red{in}~\eqref{minmaxfct} \blue{to} the minimum and maximum of \blue{finite families in $\fset^\nn$. To avoid confusion, for a finite family in $\fset^\nn$, we surround the indices of the family in parentheses, whereas the indices of the terms of the sequences are written as usual. Let $\{h_{(i)},i\in I\}$ be a finite family in $\fset^\nn$. We extend the notation used in~\eqref{minmaxfct} for the minimum and maximum of $\{h_{(i)},i\in I\}$ as follows}:
	{ \everymath={\displaystyle}
		\[
		\begin{array}{lc}
			&\underline{h}_{(I)}:=\min_{i\in I} h_{(i)}=\left([0,1]\ni x \to \min_{i\in I} h_{(i),k}(x)\right)_{k\in\nn}=\left(\underline{h}_{(I),k}\right)_{k\in\nn}\\
			\text{ and } & \overline{h}_{(I)}:=\max_{i\in I} h_{(i)}=\left([0,1]\ni x\to \max_{i\in I} h_{(i),k}(x)\right)_{k\in\nn}=\left(\overline{h}_{(I),k}\right)_{k\in\nn}.
		\end{array}
		\]
		Moreover, for a finite family $\{\beta_{(i)} : i\in I\}\subset (0,1)^\nn$, we denote by:
		\[
		\underline{\beta}_{(I)}:=\left(\min_{i\in I} \beta_{(i),k}\right)_{k\in\nn} \text{ and } \overline{\beta}_{(I)}:=\left(\max_{i\in I} \beta_{(i),k}\right)_{k\in\nn}
		\] 
		respectively, the minimum and maximum of $\{\beta_{(i)} : i\in I\}$.
	}
	
	It can be observed that any element of $\fset^\nn$ greater than an element of $\secfunh(u)$ belongs to $\secfunh(u)$. \red{In addition}, the set function $\secfunh(u)\ni h\to \secfunb(u,h)$ is order-preserving with respect to the order defined on $\fset^\nn$. Moreover, as the \red{elements} of $\fset$ are increasing functions, any element of $(0,1)^\nn$ greater than an element of $\secfunb(u,h)$ belongs to $\secfunb(u,h)$. \red{These} simple facts are presented in Lemma~\ref{factlattice}.
	\begin{lemma}
		\label{factlattice}
		Let $u\in\rr^\nn$ and $h\in\secfunh(u)$. 
		\begin{enumerate}
			\item Let $\red{g}\in\fset^\nn$ such that $\red{h\leq g}$, then $\red{g}\in\secfunh(u)$ and $\red{\secfunb(u,h)\subseteq\secfunb(u,g)}$.
			\item Let $\beta\in\secfunb(u,h)$. Then all $\beta'\in (0,1)^\nn$ such that $\beta\leq\beta'$ belong to $\secfunb(u,h)$.
		\end{enumerate}
	\end{lemma}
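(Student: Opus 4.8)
The plan is to simply unwind the definitions of $\secfunh(u)$ and $\secfunb(u,h)$ and exploit two elementary monotonicity facts: that each $h_k\in\fset$ is increasing on $[0,1]$, and that raising a base in $(0,1)$ to the power $k$ is monotone in the base. Nothing deeper is required; the only points demanding a little care are the degenerate index $k=0$ (where $\beta_k^0=1$ regardless of $\beta_k$) and the observation that every argument $\beta_k^k$ stays in $[0,1]$, so that the increasing property of $h_k$ on $[0,1]$ is the relevant one.

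For the first item I would start from $g\in\secfunh(u)$ and pick any $\beta\in\secfunb(u,g)$, so that $u_k\leq g_k(\beta_k^k)$ for every $k\in\nn$. Since $g\leq h$ in $\fset^\nn$ means $g_k(x)\leq h_k(x)$ for all $k$ and all $x\in[0,1]$, and $\beta_k^k\in[0,1]$, we obtain $u_k\leq g_k(\beta_k^k)\leq h_k(\beta_k^k)$, hence $(h,\beta)\in\funset(u)$. This simultaneously shows that $\secfunb(u,h)$ is nonempty, i.e.\ $h\in\secfunh(u)$, and that $\beta\in\secfunb(u,h)$; since $\beta$ was arbitrary in $\secfunb(u,g)$, the inclusion $\secfunb(u,g)\subseteq\secfunb(u,h)$ follows.

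For the second item, let $\beta\in\secfunb(u,h)$ and let $\beta'\in(0,1)^\nn$ with $\beta\leq\beta'$, that is $\beta_k\leq\beta_k'$ for every $k$. For $k=0$ both $\beta_0^0$ and $(\beta_0')^0$ equal $1$; for $k\geq 1$, from $0<\beta_k\leq\beta_k'<1$ we get $\beta_k^k\leq(\beta_k')^k$, and in all cases both quantities lie in $[0,1]$. Since $h_k$ is increasing on $[0,1]$, $u_k\leq h_k(\beta_k^k)\leq h_k\bigl((\beta_k')^k\bigr)$ for every $k$, so $(h,\beta')\in\funset(u)$ and $\beta'\in\secfunb(u,h)$.

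I do not expect any genuine obstacle: this is a purely order-theoretic bookkeeping lemma whose only purpose is to support the later lattice-type manipulations of $\secfunh(u)$ and $\secfunb(u,h)$. The single place where one must not be cavalier is insisting that the monotonicity of each $h_k$ is invoked only on $[0,1]$, which is precisely the interval on which membership in $\fset$ guarantees it, and that the powers $\beta_k^k$ never leave $[0,1]$.
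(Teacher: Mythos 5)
Your proof is correct, and it is exactly the routine unwinding of the definitions that the paper has in mind (the lemma is stated there as a collection of "simple facts" with no written proof); your care about the powers $\beta_k^k$ staying in $[0,1]$ and the trivial index $k=0$ is all that is needed. Nothing further to add.
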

	\begin{remark}
		We should mention that the two statements of Lemma~\ref{factlattice} refer to a well-known notion from ordered \red{set} theory. More precisely, $\secfunh(u)$ and $\secfunb(u,h)$ are said to be upward closed (or upset or order filter). The interested reader can \blue{refer to~\cite{MR1058437,MR2446182}}. 
	\end{remark}
	\begin{prop}
		\label{secfunprop}
		Let $u\in\rr^\nn$. The following assertions hold:
		\begin{itemize}
			\item For all finite sets $I$, $\left\{(h_{(i)},\beta_{(i)}): i\in I\right\}\subset \funset(u)$:
			\[
			\overline{\beta}_{(I)}\in \bigcap_{i\in I} \secfunb(u,h_{(i)}).
			\]
			
			\item Let $I$ be finite and $\{h_{(i)}: i\in I\}\subset \secfunh(u)$. Then $\underline{h}_{(I)}$ and $\overline{h}_{(I)}$ belong to $\secfunh(u)$ and
			\[
			\secfunb\left(u,\underline{h}_{(I)}\right)=\bigcap_{i\in I} \secfunb\left(u,h_{(i)}\right)\subseteq \secfunb\left(u,\overline{h}_{(I)}\right).
			\]
			If, moreover, for all $i\in I$, $h_{(i)}\in\fmonfung(u)$, then $\underline{h}_{(I)}$ and $\overline{h}_{(I)}$ belong to $\fmonfung(u)$ and $\mon(\underline{h}_{(I)})$ and $\mon(\overline{h}_{(I)})$ are smaller than $\displaystyle{\max_{i\in I} \mon(h_{(i)})}$.
		\end{itemize}
	\end{prop}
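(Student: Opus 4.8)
The plan is to verify each membership/inclusion claim by chasing the defining inequalities of $\funset(u)$ and $\secfunb(u,h)$, using the pointwise lattice structure of $\fset^\nn$ and the monotonicity of the functions $h_{(i),k}$. For the first bullet, fix a finite $I$ and $(h_{(i)},\beta_{(i)})\in\funset(u)$ for each $i\in I$; I must show $\overline{\beta}_{(I)}\in\secfunb(u,h_{(i)})$ for every fixed $i$, i.e.\ $(h_{(i)},\overline{\beta}_{(I)})\in\funset(u)$. For each $k\in\nn$ we have $u_k\le h_{(i),k}(\beta_{(i),k}^{\,k})$; since $\beta_{(i),k}\le\overline{\beta}_{(I),k}<1$ and $h_{(i),k}$ is increasing on $[0,1]$, raising to the $k$-th power and applying $h_{(i),k}$ gives $h_{(i),k}(\beta_{(i),k}^{\,k})\le h_{(i),k}(\overline{\beta}_{(I),k}^{\,k})$, whence $u_k\le h_{(i),k}(\overline{\beta}_{(I),k}^{\,k})$. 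This holds for all $k$, so $(h_{(i)},\overline{\beta}_{(I)})\in\funset(u)$, and since $i$ was arbitrary, $\overline{\beta}_{(I)}\in\bigcap_{i\in I}\secfunb(u,h_{(i)})$. (This is exactly the substance already recorded in Lemma~\ref{factlattice}, applied finitely many times.)

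For the second bullet, first note that each $h_{(i)}\in\secfunh(u)$ means there is $\beta_{(i)}\in\secfunb(u,h_{(i)})$; by the first bullet $\overline{\beta}_{(I)}$ works simultaneously for all $i$. Since $\underline{h}_{(I)}\le h_{(i)}\le\overline{h}_{(I)}$ pointwise in $\fset^\nn$, Lemma~\ref{factlattice}(1) (both directions of the inclusion) gives $\secfunh(u)\ni\overline{h}_{(I)}$ and, for the membership of $\underline{h}_{(I)}$, I argue directly: I must exhibit some $\beta$ with $u_k\le\underline{h}_{(I),k}(\beta_k^{\,k})$ for all $k$; this is $u_k\le\min_{i\in I}h_{(i),k}(\beta_k^{\,k})$, i.e.\ $u_k\le h_{(i),k}(\beta_k^{\,k})$ for every $i\in I$ and every $k$, which is precisely $\beta\in\bigcap_{i\in I}\secfunb(u,h_{(i)})$ — and this intersection is nonempty by the first bullet. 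Running the same equivalence backwards proves the asserted equality $\secfunb(u,\underline{h}_{(I)})=\bigcap_{i\in I}\secfunb(u,h_{(i)})$, and the inclusion $\bigcap_{i\in I}\secfunb(u,h_{(i)})\subseteq\secfunb(u,\overline{h}_{(I)})$ follows from Lemma~\ref{factlattice}(1) applied to $h_{(i)}\le\overline{h}_{(I)}$ for any single $i\in I$.

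It remains to handle the "eventually decreasing" addendum. Suppose each $h_{(i)}\in\fmonfung(u)$ and set $N:=\max_{i\in I}\mon(h_{(i)})$, which is finite since $I$ is finite. For $n\ge N$ and each $i\in I$ we have $h_{(i),n+1}\le h_{(i),n}$ pointwise on $[0,1]$; taking the pointwise minimum over $i$ gives $\underline{h}_{(I),n+1}(x)=\min_i h_{(i),n+1}(x)\le\min_i h_{(i),n}(x)=\underline{h}_{(I),n}(x)$ for all $x\in[0,1]$, hence $\underline{h}_{(I),n+1}\le\underline{h}_{(I),n}$; the same computation with $\max$ replacing $\min$ shows $\overline{h}_{(I),n+1}\le\overline{h}_{(I),n}$. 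Thus both $\underline{h}_{(I)}$ and $\overline{h}_{(I)}$ are eventually decreasing from $N$ on, so they lie in $\fmonfung(u)$ and their smallest decreasing indices satisfy $\mon(\underline{h}_{(I)})\le N$ and $\mon(\overline{h}_{(I)})\le N=\max_{i\in I}\mon(h_{(i)})$.

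There is no serious obstacle here: every step is a direct unwinding of definitions, and the only point requiring a moment's care is that the lattice operations $\min$ and $\max$ on $\fset$ are taken \emph{pointwise} in $x$, so inequalities between functions pass through them verbatim — in particular $\min_i$ of increasing continuous functions is again increasing and continuous (already recorded in Lemma~\ref{invcompare}(3)), which keeps $\underline{h}_{(I)},\overline{h}_{(I)}$ inside $\fset^\nn$. The finiteness of $I$ is used only to guarantee that $\max_{i\in I}\mon(h_{(i)})$ is finite and that the pointwise $\min$/$\max$ are attained.
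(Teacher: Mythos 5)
Your proof is correct and follows essentially the same route as the paper: Lemma~\ref{factlattice} for the upward-closedness arguments, the nonemptiness of $\bigcap_{i\in I}\secfunb(u,h_{(i)})$ (via the first bullet) to get $\underline{h}_{(I)}\in\secfunh(u)$, Lemma~\ref{invcompare}(3) for membership in $\fset^\nn$, and the pointwise min/max of eventually decreasing families for the last claim. If anything, you are slightly more complete than the paper, which only writes out the eventually decreasing argument for $\underline{h}_{(I)}$ and leaves $\overline{h}_{(I)}$ implicit.
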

	
	\begin{proof}
		The first statement is a direct consequence of the first assertion in Lemma~\ref{factlattice}.
		
		Let us prove the second statement. First, $\underline{h}_{(I)}$ and $\overline{h}_{(I)}$ belong $\fset^\nn$ from the third statement of Lemma~\ref{invcompare}. The fact that $\overline{h}_{(I)}$ belongs to $\secfunh(u)$ is a consequence of the first assertion in Lemma~\ref{factlattice}. Now as for all $i\in I$, $h_{(i)}\in\secfunh(u)$, we have for all $i\in I$, for all $k\in\nn$, $u_k\leq h_{(i),k}(\beta_{k}^k)$ for all $\beta\in \cap_{i\in I} \secfunb\left(u,h_{(i)}\right)$. Hence, for all $k\in\nn$, $u_k\leq \underline{h}_{(I),k}(\beta_{k}^k)$ for all $\beta\in \cap_{i\in I} \secfunb(u,h_{(i)})$. This proves that $\underline{h}_{(I)}\in \secfunh(u)$ and $\cap_{i\in I} \secfunb(u,h_{(i)})  \subseteq \secfunb(u,\underline{h}_{(I)})$. The converse inclusion is a consequence of the first statement of Lemma~\ref{factlattice}. Now suppose that $h_{(i)}\in\fmonfung(u)$ for all $i\in I$. Then, we have, for all $i\in I$, for all $x\in [0,1]$, $h_{(i),k}(x)\leq h_{(i),k+1}(x)$ for all $k\geq \mon(h_{(i)})$. This implies that for all $x\in [0,1]$, $\underline{h}_{(I),k}(x)\leq \underline{h}_{(I),k+1}(x)$ for all $k\geq \max_{i\in I} \mon(h_{(i)})$ and $\underline{h}_{(I)}\in\fmonfung(u)$ with $\max_{i\in I} \mon(h_{(i)})\geq \mon(\underline{h}_{(I)})$. \blue{Similar arguments allow us to show the results for $\overline{h}_{(I)}$.}
	\end{proof}
	Lemma~\ref{factlattice} and Proposition~\ref{secfunprop} show the \red{increasing} properties of $\secfunb(u,h)$ with respect to $h$. From \red{these} results, we can derive \red{the} increase properties of $\Fu$ with respect to $h$ and $\beta$.  
	\begin{prop}
		\label{simplefactFu}
		Let $u\in\rr^\nn$. The following properties hold:
		\begin{enumerate}
			\item Let $h\in\secfunh(u)$ be useful and $k\in\res(u,h)$. Let $\left\{\beta_{(i)},i\in I\right\}$ be a finite family in $\secfunb(u,h)$. Then \[\Fu\left(k,h,\underline{\beta}_{(I)}\right)\leq \min_{i\in I}\Fu\left(k,h,\beta_{(i)}\right).\]
			\item Let $\left\{h_{(i)}: i\in I\right\}$ be a finite family in $\secfunh(u)$ such that at least one element is useful. Let $k\in \cup_{i\in I}\res\left(u,h_{(i)}\right)$. Let $\beta\in\secfunb\left(u,\underline{h}_{(I)}\right)$. Then 
			\[
			\Fu\left(k,\underline{h}_{(I)},\beta\right)\leq \min_{i\in I} \Fu\left(k,h_{(i)},\beta\right) .
			\]
		\end{enumerate}
	\end{prop}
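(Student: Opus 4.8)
The plan is to reduce both inequalities to three elementary ingredients. First, one should check that under the stated hypotheses every occurrence of $\Fu(k,\cdot,\cdot)$ that matters is genuinely given by the formula $\ln(h_k^{-1}(u_k))/\ln(\beta_k)$ of Definition~\ref{upperboundfun} and not by the $+\infty$ branch. Second, one exploits the sign bookkeeping: whenever $k\in\res(u,h)$ we have $h_k(0)<u_k\le h_k(\beta_k^k)\le h_k(1)$, so $h_k^{-1}(u_k)\in(0,1]$ and the numerator $L:=\ln(h_k^{-1}(u_k))$ is $\le 0$ while $\ln(\beta_k)<0$; hence $\Fu(k,h,\beta)$ is a quotient of a nonpositive number by a negative one, and the relevant monotonicity of $(L,t)\mapsto L/t$ on $\{L\le 0\}\times\{t<0\}$ is immediate. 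Third, one uses the compatibility of the lattice operations with $\funset(u)$ and with inversion, which is exactly what Proposition~\ref{secfunprop} and Lemma~\ref{invcompare} supply.

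For the first statement I would fix $h$ useful and $k\in\res(u,h)$. Applying the increasing map $h_k$ (or its inverse) to $u_k\le h_k(\beta_{(i),k}^k)$ and using $\min_i\beta_{(i),k}^k=(\min_i\beta_{(i),k})^k=\underline{\beta}_{(I),k}^k$, I would first establish $\underline{\beta}_{(I)}\in\secfunb(u,h)$; together with $h_k(0)<u_k$ this makes $\Fu(k,h,\underline{\beta}_{(I)})$ and every $\Fu(k,h,\beta_{(i)})$ equal to $L/\ln(\cdot)$ with the \emph{same} $L\le 0$. If $L=0$ both sides are $0$; otherwise $\underline{\beta}_{(I),k}\le\beta_{(i),k}$ gives $\ln(\underline{\beta}_{(I),k})\le\ln(\beta_{(i),k})<0$, and dividing $L$ by the smaller (more negative) denominator yields $\Fu(k,h,\underline{\beta}_{(I)})\le\Fu(k,h,\beta_{(i)})$ for each $i\in I$. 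Taking the minimum over $i$ closes this case.

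For the second statement I would begin with Proposition~\ref{secfunprop}, which gives $\underline{h}_{(I)}\in\secfunh(u)$ and $\secfunb(u,\underline{h}_{(I)})=\bigcap_{i\in I}\secfunb(u,h_{(i)})$; hence the given $\beta$ belongs to each $\secfunb(u,h_{(i)})$, and in particular $u_k\le h_{(i),k}(1)$ and $u_k\le\underline{h}_{(I),k}(1)$. Then I would split over $i\in I$: if $k\notin\res(u,h_{(i)})$ the right-hand side is $+\infty$ and nothing is to be shown; if $u_k>h_{(i),k}(0)$, then $u_k>h_{(i),k}(0)\ge\underline{h}_{(I),k}(0)$, so $u_k$ lies in both ranges $[h_{(i),k}(0),h_{(i),k}(1)]$ and $[\underline{h}_{(I),k}(0),\underline{h}_{(I),k}(1)]$, and $i\in\underline{I}(u_k)$ in the notation of Lemma~\ref{invcompare}. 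The third item of that lemma then gives $\underline{h}_{(I),k}^{-1}(u_k)=\max_{j\in\underline{I}(u_k)}h_{(j),k}^{-1}(u_k)\ge h_{(i),k}^{-1}(u_k)>0$, so $\ln(\underline{h}_{(I),k}^{-1}(u_k))\ge\ln(h_{(i),k}^{-1}(u_k))$; dividing through by $\ln(\beta_k)<0$ reverses the inequality into $\Fu(k,\underline{h}_{(I)},\beta)\le\Fu(k,h_{(i)},\beta)$, and minimizing over $i$ finishes.

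The main obstacle --- in fact essentially the only subtlety --- is the bookkeeping around the $+\infty$ branch of $\Fu$ and the domains on which inverses and Lemma~\ref{invcompare} are legitimate: one must isolate the indices $i$ with $u_k\le h_{(i),k}(0)$ (for which the right-hand side is $+\infty$ and the claim is vacuous), and verify that $u_k$ lies in $[\underline{h}_{(I),k}(0),\underline{h}_{(I),k}(1)]$ and in the relevant $[h_{(i),k}(0),h_{(i),k}(1)]$ before applying any inverse. Once these edge cases are handled, the remainder is just the monotonicity of $t\mapsto L/t$ with $L\le 0$ and $t<0$, combined with the two cited structural results.
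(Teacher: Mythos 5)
Your proposal is correct and follows essentially the same route as the paper: the first statement via the monotonicity of the quotient $\ln(h_k^{-1}(u_k))/\ln(\beta_k)$ in $\beta_k$ (with the sign of the numerator controlled by $k\in\res(u,h)$), and the second via Proposition~\ref{secfunprop} together with the inverse-of-minimum formula of Lemma~\ref{invcompare}, then dividing by $\ln(\beta_k)<0$. Your additional checks (that $\underline{\beta}_{(I)}\in\secfunb(u,h)$ and the $L=0$ edge case at $k=0$) are minor refinements the paper leaves implicit, not a different argument.
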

	
	\begin{proof}
		({\itshape 1}) The result follows from the fact that the function $(0,1)\ni s\mapsto \ln(h_k^{-1}(u_k))/\ln(s)$ is increasing as $\ln(h_k^{-1}(u_k))$ is negative.
		
		({\itshape 2}) First, from Proposition~\ref{secfunprop}, $\underline{h}_I\in \secfunh(u)$ and $\beta\in\cap_{i\in I} \secfunb(u,h_{(i)})$. Second, from Lemma~\ref{invcompare}, for all $k\in\nn$, \blue{${\underline{h}}_{(I),k}^{-1}(u_k)=\max_{\{i\in I: h_{(i),k}(0)\leq u_k\}} h_{(i),k}^{-1}(u_k)$}. Note that \blue{the set $\{i\in I: h_{(i),k}(0)\leq u_k\}$} is nonempty as it contains all $i\in I$ such that $k\in\res\left(u,h_{(i)}\right)$. Moreover, ${\underline{h}}_{(I),k}^{-1}(u_k)\geq h_{(j),k}^{-1}(u_k)$ for all $j\in I$ such that $h_{(j),k}(0)<u_k$ and so ${\underline{h}}_{(I),k}^{-1}(u_k)>0$. Let $i\in I$. If $k\notin \res(u,h_{(i)})$, $\Fu(k,h_{(i)},\beta)=+\infty$. Now assume that $k\in \res(u,h_{(i)})$, $h_{(i),k}^{-1}(u_k)\leq {\underline{h}}_{(I),k}^{-1}(u_k)$. By applying $\ln$ and dividing by $\ln(\beta_k)<0$, we obtain the desired result.
	\end{proof}
	
	\begin{coro}
		\label{coroinc}
		Let $u\in\rr^\nn$. Let $h\in\funset(u)$. Then, for all useful $g\in\funset(u)$ such that $g \leq h$, we have for all $k\in\res(u,g)$ and all $\beta\in\secfunb(u,g)$:
		\[
		\Fu(k,g,\beta)\leq \Fu(k,h,\beta).
		\]
	\end{coro}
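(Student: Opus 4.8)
\emph{Plan.} The statement is nothing more than the two-element instance of the second part of Proposition~\ref{simplefactFu}, so the plan is to specialise that result rather than repeat its computation. First I would record the membership facts that make everything well posed: since $h,g\in\funset(u)$ we have $h,g\in\secfunh(u)$, and the hypothesis $g\leq h$ together with Lemma~\ref{factlattice} gives $\secfunb(u,g)\subseteq\secfunb(u,h)$; hence for every $\beta\in\secfunb(u,g)$ the pair $(h,\beta)$ again lies in $\funset(u)$, so that $\Fu(k,h,\beta)$ is defined (with value in $\rr_+\cup\{+\infty\}$).

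Next I would apply Proposition~\ref{simplefactFu}(2) to the finite family indexed by $I=\{1,2\}$ with $h_{(1)}=g$ and $h_{(2)}=h$. Because $g\leq h$ in $\fset^\nn$, the pointwise minimum $\underline{h}_{(I)}$ is exactly $g$; the family contains the useful element $g$; any $k\in\res(u,g)$ lies in $\bigcup_{i\in I}\res(u,h_{(i)})$; and $\beta\in\secfunb(u,g)=\secfunb(u,\underline{h}_{(I)})$. The proposition then yields
\[
\Fu(k,g,\beta)=\Fu(k,\underline{h}_{(I)},\beta)\leq\min\{\Fu(k,g,\beta),\Fu(k,h,\beta)\}\leq\Fu(k,h,\beta),
\]
which is precisely the asserted inequality.

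The one point deserving a word of care — and really the only one — is the interplay between $\res(u,g)$ and $\res(u,h)$: since $g_k(0)\leq h_k(0)$ one has $\res(u,h)\subseteq\res(u,g)$, so it may well happen that $k\notin\res(u,h)$; in that case $\Fu(k,h,\beta)=+\infty$ by Definition~\ref{upperboundfun} and the inequality is trivial. This situation is already subsumed in the proof of Proposition~\ref{simplefactFu}(2) (where the branch $k\notin\res(u,h_{(i)})$ is handled by the convention $\Fu(k,h_{(i)},\beta)=+\infty$), so no extra argument is needed. I therefore expect no genuine obstacle here: the entire content is the correct bookkeeping of which earlier lemma ($\secfunh$ and $\secfunb$ being upward closed, the inverse-of-a-minimum formula) feeds into the monotonicity already established in Proposition~\ref{simplefactFu}.
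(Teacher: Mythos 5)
Your proposal is correct and is essentially the paper's intended argument: Corollary~\ref{coroinc} is stated as an immediate specialisation of the second statement of Proposition~\ref{simplefactFu} to the two-element family $\{g,h\}$ with $g\leq h$ (so that $\underline{h}_{(I)}=g$), together with the upward-closedness facts from Lemma~\ref{factlattice} ensuring $\beta\in\secfunb(u,h)$ and the convention $\Fu(k,h,\beta)=+\infty$ when $k\notin\res(u,h)$. Nothing further is needed.
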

	
	The first statement of Proposition~\ref{simplefactFu} \red{should} be read as follows. \red{When} a sequence $h\in\secfunh(u)$ has been found, we \red{should} choose the smallest possible $\beta\in\secfunb(u,h)$ to obtain the smallest possible value of $\Fu(k,h,\beta)$. The second statement has a similar meaning in a functional sense. If we have at hand several functions in $\secfunh(u)$ sharing \red{the} same sequence $\beta$, we \red{must} consider the minimum of these functions to obtain the smallest possible value of $\Fu(k,h,\beta)$.
	\begin{remark}
		In the proof of Proposition~\ref{mongtomon0}, the pair \blue{$(g,\gamma)\in\fmong(u)$ such that $\mon(g,\gamma)=0$ constructed from $(h,\beta)\in\fmong(u)$ is greater than $(h,\beta)$}. In this case, as proved in Proposition~\ref{simplefactFu}, \red{using} \blue{$(g,\gamma)$ instead of $(h,\beta)$} \red{increases} the value of $\Fu(k,\cdot,\cdot)$ i.e. \blue{$\Fu(k,h,\beta)\leq \Fu(k,g,\gamma)$} for all $k\in\res(u,g)\cap [0,\blue{\mon(h,\beta)}]$.
	\end{remark}
	\subsubsection{The upper bound functional $\Fu$ as an upper bound on $\bigks_u$}
	Here, we prove that using eventually and usefully decreasing pairs $(h,\beta)$ \red{allows us} to prove that $\Fu(k,h,\beta)$ is an upper bound \red{on} $\bigks_u$ for all $k\geq \mon(h,\beta)$. First, we prove some useful properties satisfied by $\Fu$. This particularizes the results of Proposition~\ref{prop:simpleFuprop} to eventually decreasing pairs.
	\begin{theorem}
		\label{mainprop}
		Let $u\in\rr^\nn$ and \blue{$(h,\beta)\in \fmong(u)$ be usefully decreasing}. Let \blue{$k\geq\mon(h,\beta)$}. The following statements hold.
		\begin{enumerate}
			\item For all $j>\Fu(k,h,\beta)$, $u_j<u_k$.
			\item Let $j\in [\mon(h,\beta),k]\cap \nn$. If $u_j\leq u_k$ then $\Fu(k,h,\beta)\leq \Fu(j,h,\beta)$. 
\item			
			\blue{If moreover, $(h,\beta)\in\fcste(u)$, the condition $j\leq k$ can be dropped, that is $j,k\geq\cst(h,\beta)$ and $u_j\leq u_k$ implies that $\Fu(k,h,\beta)\leq \Fu(j,h,\beta)$.
			}
		\end{enumerate}
	\end{theorem}
	\begin{proof}	
		\emph{1.} Let $j>\Fu(k,h,\beta)$ then \blue{from the first statement of Proposition~\ref{prop:simpleFuprop}} $j\geq \lfloor \Fu(k,h,\beta)\rfloor+1> k$ and, as $h_k$ is increasing, $h_k(\beta_k^j)\leq h_k(\beta_k^{\lfloor \Fu(k,h,\beta)\rfloor+1})<u_k$ from the second statement \blue{of Proposition~\ref{prop:simpleFuprop}}. Moreover, by definition, as $(h,\beta)\in\fmong(u)$ and $j>k\geq \mon(h,\beta)$, we have $h_j(x)\leq h_k(x)$ for all $x\in [0,1]$ and $0<\beta_j\leq \beta_k<1$. Then, $h_j(\beta_j^j)\leq h_k(\beta_j^j)$ and as $h_k$ is increasing $h_k(\beta_j^j)\leq h_k(\beta_k^j)$. It follows that $u_j\leq h_j(\beta_j^j)<u_k$.
		
		\emph{2.} Let $j \red{\in} [\mon(h,\beta),k]\cap \nn$ and assume that $u_j\leq u_k$. If $j\notin \res(u,h)$, the result holds as $\Fu(j,h,\beta)=+\infty$. Now suppose that $j\in\res(u,h)$. As $k  \red{\geq} j\geq \mon(h,\beta)$, we have $h_k(x)\leq h_j(x)$ for all $x\in [0,1]$. This implies that $h_k(0)\leq h_j(0)$ and $h_k(1)\leq h_j(1)$. As $j\in\res(u,h)$, we have $h_j(0)<u_j$ and as $h\in\secfunh(u)$, we have $u_k< h_k(1)$. By assumption, $u_j\leq u_k$ then $u_j,u_k\in \blue{[h_k(0),h_k(1)]\cap [h_j(0),h_j(1)]}=[h_j(0),h_k(1)]$. From Lemma~\ref{invcompare}, we have $h_j^{-1}(u_j)\leq h_k^{-1}(u_j)$.  Hence, as $\ln$ and $h_k^{-1}$ are increasing, $\ln (h_j^{-1}(u_j))\leq \ln (h_k^{-1}(u_j))\leq \ln(h_k^{-1}(u_k))$. Moreover, as $\beta$ is decreasing from $\mon(h,\beta)$, we get $\ln(\beta_j)/\ln(\beta_k)\in (0,1)$. Dividing by $\ln(\beta_j)<0$ we have $\Fu(j,h,\beta)\geq \ln(h_k^{-1}(u_k)/\ln(\beta_j)\geq  (\ln(h_k^{-1}(u_k))/\ln(\beta_j))\times (\ln(\beta_j)/\ln(\beta_k))=\Fu(k,h,\beta)$. 
		
		\emph{3.} Assume that there exists $(g,\gamma)\in\fset\times (0,1)$ such that $(h_n,\beta_n)=(g,\gamma)$ for all $n\geq \cst(h,\beta)$. The conclusion follows from the fact that $[g(0),g(1)]\ni x\to (\ln (g^{-1}(x)))/\ln(\gamma)$ is decreasing  as $\gamma\in (0,1)$.
	\end{proof}
	
	\begin{remark}
 When $(h,\beta)\in\fmong(u)$, \blue{the second statement of Theorem~\ref{mainprop} confirms that $\lfloor \Fu(k,h,\beta)\rfloor+1$ is a stopping integer. Indeed, after} $\lfloor\Fu(k,h,\beta)\rfloor+1$, the terms of $u$ are strictly smaller than $u_k$. Without the auxiliary sequence characterized by $(h,\beta)$, it is not possible to establish such results.
	\end{remark}
	
	\begin{coro}
		\label{mainpropmon}
		Let $u\in\Lambda$ such that $\gsls{u}\neq \emptyset$. Let \blue{$(h,\beta)\in \fmong(u)$ be useful and such that $\mon(h,\beta)=0$}. Then,
		\begin{enumerate}
			\item for all $j\in\nn$ such that $j>\Fu(k,h,\beta)$, $u_j<u_k$;
			\item let $j\in\res(u,h)$. If $j\leq k$ and $u_j\leq u_k$ then $\Fu(k,h,\beta)\leq \Fu(j,h,\beta)$. If moreover, \blue{$(h,\beta)\in \fcste(u)$ and such that $\cst(h,\beta)=0$}, $u_j\leq u_k\implies \Fu(k,h,\beta)\leq \Fu(j,h,\beta)$.
		\end{enumerate}
	\end{coro}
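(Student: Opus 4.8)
The plan is to derive Corollary~\ref{mainpropmon} directly from Theorem~\ref{mainprop} by exploiting the fact that an element $(h,\beta)\in\fmon(u)$ is decreasing everywhere, so $\mon(h,\beta)=0$, and that being useful for such a pair is the same as being usefully decreasing (as already noted right after the definition of usefully decreasing). First I would observe that the hypothesis $\gsls{u}\neq\emptyset$ together with $(h,\beta)$ useful guarantees, via Theorem~\ref{mainthmon} (or more simply via the fact that $\res(u,h)\neq\emptyset$ since $h$ is useful), that there is at least one index $k\in\res(u,h)$ on which $\Fu(k,h,\beta)$ is finite, so the statements are non-vacuous. The role of $\gsls{u}\neq\emptyset$ here is really just to place us in the regime where useful decreasing pairs exist; the two conclusions themselves only need $k\in\res(u,h)$.

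For the first item, I would fix $k\in\res(u,h)$ and apply the fourth statement of Theorem~\ref{mainprop}: since $(h,\beta)\in\fmon(u)\subseteq\fmong(u)$ with $\mon(h,\beta)=0$, the condition $k\geq\mon(h,\beta)$ is automatic, and we conclude $u_j<u_k$ for every $j>\Fu(k,h,\beta)$. (If $k\notin\res(u,h)$ then $\Fu(k,h,\beta)=+\infty$ and there is nothing to prove.) For the second item, I would fix $j\in\res(u,h)$ with $j\leq k$ and $u_j\leq u_k$, and invoke the fifth statement of Theorem~\ref{mainprop}: here $j\in[\mon(h,\beta),k]\cap\nn=[0,k]\cap\nn$, so the hypotheses of that statement are met and we get $\Fu(k,h,\beta)\leq\Fu(j,h,\beta)$. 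For the ``moreover'' part, when $(h,\beta)\in\fcst(u)$ we have $\cst(h,\beta)=0$, so $j,k\geq\cst(h,\beta)$ holds trivially and the second half of the fifth statement of Theorem~\ref{mainprop} applies, allowing the constraint $j\leq k$ to be dropped; one should note that in the constant case $\res(u,h)=\{k\in\nn: u_k>g(0)\}$ where $g$ is the common value of the $h_n$, so the monotonicity of $x\mapsto \ln(g^{-1}(x))/\ln(\gamma)$ on $[g(0),g(1)]$ directly yields the comparison for any $j,k\in\res(u,h)$ with $u_j\leq u_k$.

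I do not expect any genuine obstacle: the corollary is essentially a specialization of Theorem~\ref{mainprop} to the case of everywhere-decreasing (resp. constant) pairs, where the various ``$k\geq\mon(h,\beta)$'' and ``$j,k\geq\cst(h,\beta)$'' side-conditions become vacuous. The only point requiring a line of care is making explicit that ``useful'' and ``usefully decreasing'' coincide on $\fmon(u)$ and $\fcst(u)$, so that $\res(u,h)\cap[\mon(h,\beta),+\infty)=\res(u,h)$ and hence the indices $k,j$ appearing in Theorem~\ref{mainprop} range over all of $\res(u,h)$.
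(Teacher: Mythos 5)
Your proposal is correct and matches the paper's (implicit) derivation: the corollary is exactly the specialization of the fourth and fifth statements of Theorem~\ref{mainprop} to pairs in $\fmon(u)$ (resp. $\fcst(u)$), where $\mon(h,\beta)=0$ (resp. $\cst(h,\beta)=0$) makes the side-conditions vacuous and ``useful'' coincides with ``usefully decreasing''. Your handling of the implicit hypothesis $k\in\res(u,h)$ (the statements being trivial or automatic otherwise) is a sensible and accurate reading.
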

	\begin{remark}
		We insist on the fact that if $(h,\beta)\in \fmong(u)$ and not in $\fcste(u)$, we cannot guarantee that $\Fu(k,h,\beta)\leq \Fu(j,h,\beta)$ when $u_j\leq u_k$ and $j>k$. This means \red{that} we may find some $j>\bigks_u$ such that $\Fu(j,h,\beta)$ is strictly smaller than $\Fu(\bigks_u,h,\beta)$.  
	\end{remark}
	
	\begin{prop}
		\label{inter}
		Let $u\in\rr^\nn$ and $(h,\beta)\in\funset(u)$. The following statements hold:
		\begin{enumerate}
			\item For all $j\geq \bigks_u$, $\bigks_u\leq \Fu(j,h,\beta)$;
			\item Assume that $(h,\beta)\in \fmong(u)$ and is usefully decreasing, then:
			\begin{enumerate}
				\item for all $j\geq \mon(h,\beta)$, we have $\bigks_u\leq \Fu(j,h,\beta)$;
				\item if $\bigks_u\geq \mon(h,\beta)$, we have for all $j\in [\mon(h,\beta),\bigks_u]$:
				\[
				\Fu(\bigks_u,h,\beta)\leq \Fu(j,h,\beta) .
				\] 
			\end{enumerate}
			\item If, moreover $(h,\beta)\in \fcste(u)$ (and is still usefully decreasing) and $\bigks_u\geq \cst(h,\beta)$, then we have: 
			\[
			\min_{k\in\res(u,h)\cap [\mon(h,\beta),+\infty)} \Fu(k,h,\beta)=\Fu(\bigks_\nu,h,\beta)\enspace .
			\]
		\end{enumerate}
	\end{prop}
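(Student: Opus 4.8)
\emph{Proof strategy.} The plan is to treat the three items separately and reduce each to Theorem~\ref{mainprop} together with the structural facts on $\bigks_u$ from Section~\ref{simplefacts}; the only point requiring a genuine (short) argument is a membership claim inside item~3. For item~1 I would first dispose of the trivial case $\bigks_u=+\infty$, and then, for $j\geq\bigks_u$, split on whether $j\in\res(u,h)$: if not, $\Fu(j,h,\beta)=+\infty$ by definition; if so, statement~3 of Theorem~\ref{mainprop} gives $\Fu(j,h,\beta)\geq j\geq\bigks_u$.

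For item~2 I would first harvest the standing consequences of the hypotheses: since $h\in\fmonfung(u)$, Proposition~\ref{nonuppbound} gives $u\in\Lambda$; since $(h,\beta)$ is usefully decreasing, Theorem~\ref{mainthmon} gives $\gsls{u}\neq\emptyset$, hence $\gs_u\neq\emptyset$ by Proposition~\ref{posetdelta}, so $\bigks_u<+\infty$ and $u_{\bigks_u}=\valu{u}$ by Proposition~\ref{argmax}. For~2(a), fix $j\geq\mon(h,\beta)$; if $j\notin\res(u,h)$ the value is $+\infty$, and if $j\in\res(u,h)$ I argue by contradiction: were $\Fu(j,h,\beta)<\bigks_u$, then statement~4 of Theorem~\ref{mainprop} applied at $\ell=\bigks_u$ (legitimate since $\bigks_u>\Fu(j,h,\beta)$) would force $u_{\bigks_u}<u_j$, contradicting $u_{\bigks_u}=\valu{u}\geq u_j$. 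For~2(b), since $\bigks_u\geq\mon(h,\beta)$ and $u_j\leq\valu{u}=u_{\bigks_u}$ for every $j\in[\mon(h,\beta),\bigks_u]$, statement~5 of Theorem~\ref{mainprop} with $k=\bigks_u$ yields $\Fu(\bigks_u,h,\beta)\leq\Fu(j,h,\beta)$ directly.

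For item~3, the preliminary and (I expect) hardest step is to verify that $\bigks_u$ actually lies in the index set $\res(u,h)\cap[\mon(h,\beta),+\infty)$ over which the minimum is taken. The bound $\bigks_u\geq\cst(h,\beta)\geq\mon(h,\beta)$ is given; for $\bigks_u\in\res(u,h)$ I would pick any $n\in\res(u,h)\cap[\mon(h,\beta),+\infty)$ (nonempty by usefulness), use that $h$ is decreasing from $\mon(h,\beta)$ and constant from $\cst(h,\beta)$ to get $h_{\cst(h,\beta)}(0)\leq h_n(0)<u_n\leq\valu{u}=u_{\bigks_u}$, and observe $h_{\bigks_u}(0)=h_{\cst(h,\beta)}(0)$ since $\bigks_u\geq\cst(h,\beta)$; this gives both $\bigks_u\in\res(u,h)$ and the inequality $\min\leq\Fu(\bigks_u,h,\beta)$. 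For the reverse inequality I would show $\Fu(\bigks_u,h,\beta)\leq\Fu(k,h,\beta)$ for every admissible $k$: when $\mon(h,\beta)\leq k\leq\bigks_u$ this is exactly item~2(b); when $k>\bigks_u$ then $k,\bigks_u\geq\cst(h,\beta)$ and $u_k\leq\valu{u}=u_{\bigks_u}$, so the eventually-constant refinement of statement~5 of Theorem~\ref{mainprop} (where the hypothesis $j\leq k$ may be dropped) applies. Combining, the minimum is attained at $\bigks_u$ and equals $\Fu(\bigks_u,h,\beta)$. The one subtlety worth emphasizing is precisely that $\bigks_u\in\res(u,h)$ can fail for a merely useful pair, but is forced here because $\bigks_u\geq\cst(h,\beta)$ identifies $h_{\bigks_u}(0)$ with the constant tail value $h_{\cst(h,\beta)}(0)$, which a usefully decreasing eventually-constant pair must keep strictly below $\valu{u}$; everything else is bookkeeping on top of Theorem~\ref{mainprop}.
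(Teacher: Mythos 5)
Your proposal is correct and follows essentially the same route as the paper: items 1 and 2 reduce to statements 3, 4 and 5 of Theorem~\ref{mainprop} exactly as in the paper's proof, and item 3 hinges, as in the paper, on first establishing $\bigks_u\in\res(u,h)$ and then invoking item 2(b) for $k\leq\bigks_u$ and the eventually-constant refinement of statement 5 for $k>\bigks_u$. Your verification of $\bigks_u\in\res(u,h)$ via the constant tail value $h_{\cst(h,\beta)}(0)$ is just a uniform repackaging of the paper's case split on the position of the witness index, so no substantive difference.
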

	
	\begin{proof}
		({\itshape 1}) Let $j\geq \bigks_u$. If $j\notin \res(u,h)$, then $\Fu(j,h,\beta)=+\infty$ and the inequality holds. Now, assume that $(h,\beta)$ is useful and $j\in\res(u,h)$. From the third statement of \blue{Proposition~\ref{prop:simpleFuprop}}, $\Fu(j,h,\beta)\geq j\geq \bigks_u$. 
		
		({\itshape 2}) (\itshape{a}) Let assume that $(h,\beta)\in\fmong(u)$. Let $j\geq \mon(h,\beta)$. If $j\notin \res(u,h)$, \red{then} the inequality \red{holds trivially}. Now, assume that $j \in\res(u,h)$. If $\bigks_u>\Fu(j,h,\beta)$, from the \blue{first} statement of Theorem~\ref{mainprop}, $u_{\bigks_u}<u_j$. This contradicts that $u_{\bigks_u}=\valu{u}$.
		
		(\itshape{b}) Let us suppose that $\bigks_u\geq \mon(h,\beta)$. Let $j\in [\mon(h,\beta),\bigks_u]$. If $j\notin\res(u,h)$, \red{then} the inequality holds. Suppose that $j\in\res(u,h)$. Then, as $u_j\leq u_{\bigks_u}=\valu{u}$ and $j\leq  \bigks_u$, from the last statement of Theorem~\ref{mainprop}, we have $\Fu(\bigks_u,h,\beta)\leq \Fu(j,h,\beta)$. 
		
		({\itshape 3}) First, as $\fcste(u)\subseteq \fmong(u)$, $\mon(h,\beta)$ is defined for all $(h,\beta)\in \fcste(u)$. As $(h,\beta)$ can be decreasing before being constant, we have $\mon(h,\beta)\leq \cst(h,\beta)$. Let $(g,\gamma)\in \fset\times (0,1)$ such that $(h_n,\beta_n)=(g,\gamma)$ for all $n\geq \cst(h,\beta)$. We prove that $\bigks_u\in\res(u,h)$. As $(h,\beta)$ is usefully decreasing, 
		$\res(u,h)\cap [\mon(h,\beta),+\infty)$ is nonempty. If there exists $j\in \res(u,h)\cap [\mon(h,\beta),+\infty)$ such that $j<\bigks_u$, then, as $j\geq \mon(h,\beta)$ and $\bigks_u\in\agmx(u)$,
		we have $u_{\bigks_u}\geq u_j>h_j(0)\geq h_{\bigks_u}(0)$. If there exists $j\in \res(u,h)\cap [\mon(h,\beta),+\infty)$ such that $j>\bigks_u$, then, as $j>\bigks_u\geq \cst(h,\beta)$ and $\bigks_u\in\agmx(u)$, we have $u_{\bigks_u} \geq u_j>h_j(0)=g(0)=h_{\bigks_u}(0)$. 
		
		Now let $k\in\res(u,h)\cap [\mon(h,\beta),+\infty)$. If $k\leq \bigks_u$, then as $\bigks_u\geq \mon(h,\beta)$, the assertion (b) of the second statement can be used, and we have  $\Fu(\bigks_u,h,\beta)\leq \Fu(k,h,\beta)$. Now suppose that $k>\bigks_u$. We can use the \blue{third} statement of Theorem~\ref{mainprop} as $k>\bigks_u\geq \cst(h,\beta)$, $u_k< u_{\bigks_u}$, $(h,\beta)\in\fcste(u)$ and $\bigks_u\in \res(u,h)$.   
	\end{proof}
	
	\begin{coro}
		\label{intercst}
		Let $u\in\Lambda$ \blue{be such that} $\gsls{u}\neq \emptyset$. \blue{Let $(h,\beta)\in\fmong(u)$ be useful and such that $\mon(h,\beta)=0$}. The following statements hold:
		\begin{enumerate}
			\item For all $j\in\nn$, $\bigks_u\leq \Fu(j,h,\beta)$.
			\item For all $j\leq \bigks_u$, $\Fu(\bigks_u,h,\beta)\leq \Fu(j,h,\beta)$.
			\item If, moreover \blue{$(h,\beta)$ belongs $\fcste(u)$ and such that $\cst(h,\beta)=0$}, then we have: 
			\[
			\min_{k\in\res(u,h)} \Fu(k,h,\beta)=\Fu(\bigks_\nu,h,\beta)\enspace .
			\]
		\end{enumerate}
	\end{coro}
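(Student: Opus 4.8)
The plan is to obtain all three statements directly from Proposition~\ref{inter}, simply by observing that a (globally) decreasing pair is the special case of an eventually decreasing one in which the onset index vanishes. First I would record the elementary facts already noted in the excerpt: $\fmon(u)\subseteq\fmong(u)$ with $\mon(h,\beta)=0$; $\fcst(u)\subseteq\fcste(u)\subseteq\fmong(u)$ with $\cst(h,\beta)=\mon(h,\beta)=0$; and, for a pair in $\fmon(u)$, the notions \emph{useful} (Definition~\ref{useful}) and \emph{usefully decreasing} coincide. Together with the standing hypothesis $\gsls{u}\neq\emptyset$, which forces $\bigks_u<+\infty$ via Proposition~\ref{posetdelta}, this shows that all the hypotheses of Proposition~\ref{inter} are met for $(h,\beta)$.

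For the first statement I would invoke the assertion $2(a)$ of Proposition~\ref{inter}: since $(h,\beta)\in\fmong(u)$ is usefully decreasing, $\bigks_u\leq\Fu(j,h,\beta)$ for every $j\geq\mon(h,\beta)=0$, i.e. for every $j\in\nn$. For the second statement, since $\mon(h,\beta)=0$ we trivially have $\bigks_u\geq\mon(h,\beta)$, so assertion $2(b)$ of Proposition~\ref{inter} applies and gives $\Fu(\bigks_u,h,\beta)\leq\Fu(j,h,\beta)$ for all $j\in[0,\bigks_u]$, that is, for all $j\leq\bigks_u$.

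For the third statement I would use that $(h,\beta)\in\fcst(u)$ lies in $\fcste(u)$ with $\cst(h,\beta)=0$, so $\bigks_u\geq\cst(h,\beta)$ holds automatically, and $\res(u,h)\cap[\mon(h,\beta),+\infty)=\res(u,h)$. The third assertion of Proposition~\ref{inter} then yields $\min_{k\in\res(u,h)}\Fu(k,h,\beta)=\Fu(\bigks_u,h,\beta)$, which is the claimed identity.

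There is essentially no substantive obstacle here: the corollary is a pure specialization, so the only real work is bookkeeping — invoking each inclusion ($\fmon\subseteq\fmong$, $\fcst\subseteq\fcste$) with the correct value of $\mon$ (resp. $\cst$), and translating the usefulness hypothesis into the ``usefully decreasing'' hypothesis demanded by Proposition~\ref{inter}. One should also double-check that $\bigks_u$ is finite — guaranteed by $\gsls{u}\neq\emptyset$ and Proposition~\ref{posetdelta} — so that all the inequalities are between genuine extended reals and the minimum in the third statement is attained.
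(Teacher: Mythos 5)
Your proposal is correct and matches the paper's intent exactly: the corollary is stated there without proof precisely because it is the specialization of Proposition~\ref{inter} to a globally decreasing pair, where $\mon(h,\beta)=\cst(h,\beta)=0$ and ``useful'' coincides with ``usefully decreasing''. Your bookkeeping (including the finiteness of $\bigks_u$ from $\gsls{u}\neq\emptyset$ via Proposition~\ref{posetdelta}) is exactly the argument the authors leave implicit.
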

	The third statement of Corollary~\ref{intercst} can be directly \red{applied} in a particular context. Suppose that we have an explicit closed formula for $u_k$, that is, there exists a function $f:\rr_+\mapsto \rr$ for all $k\in\nn$ such that $u_k=f(k)$. Suppose that we have \blue{$(h,\beta)\in\fcste(u)$ with $\cst(h,\beta)=0$. It} is identified as an element of $\fset\times (0,1)$. The function $h^{-1}$ \red{can} be used to regularize and strictly convexify, meaning that $(\ln(\beta))^{-1}\times \ln\circ h^{-1}\circ f$ is \red{a} smooth and strictly convex \red{function}. \red{Thus, we} obtain with some calculus the exact value of $\bigks_\nu$ as the integer part (or integer part +1) of the zero of the derivative of $\rr_+\ni s\to \ln(h^{-1}(f(s)))/(\ln(\beta))$.    
	
	We \red{end} the presentation of the main properties of $\Fu$ with a complete characterization of $\bigks_u$ from the upper bound \red{functional} $\Fu$.
	\begin{theorem}[Formula Optimality]
		\label{formulaopt}
		For all $u\in\rr^\nn$, we have the following formula
		\[
		\bigks_u = \inf_{(h,\beta)\in\fmong(u)}\ \inf_{\substack{k\in\res(u,h)\\ k\geq \mon(h,\beta)}} \Fu(k,h,\beta)=  \inf_{(h,\beta)\in\fmong(u)}\ \inf_{\substack{k\in\res(u,h)\\ k\geq \mon(h,\beta)}}\dfrac{\ln(h_{\red{k}}^{-1}(u_k))}{\ln(\beta)}
		\]
		and the $\inf$ can be replaced by $\min$ when $\gsls{u}\neq \emptyset$.
	\end{theorem}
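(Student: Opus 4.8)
The plan is to reduce everything to results already established and to argue by cases on whether $\gsls{u}$ is empty, bounding the right-hand side by $\bigks_u$ from both sides. I would start with the trivial observation that the two expressions on the right of the formula agree: if $(h,\beta)\in\fmong(u)\subseteq\funset(u)$ and $k\in\res(u,h)$, then by definition $u_k>h_k(0)$, so the first branch of Definition~\ref{upperboundfun} gives $\Fu(k,h,\beta)=\ln(h_k^{-1}(u_k))/\ln(\beta_k)$; hence it remains only to prove the first equality together with the attainment claim.

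For the case $\gsls{u}=\emptyset$, I would first note that $\bigks_u=+\infty$: if $u\notin\Lambda$ this is Proposition~\ref{notinlambda}, and if $u\in\Lambda$ it follows from the first assertion of Proposition~\ref{posetdelta} ($\gsls{u}=\emptyset\Rightarrow\gs_u=\emptyset$) and $\bigks_u=\inf\gs_u$. Next I would show the right-hand side is also $+\infty$. If $u\notin\Lambda$, Proposition~\ref{nonuppbound} gives $\fmonfung(u)=\emptyset$, hence $\fmong(u)=\emptyset$, and the outer infimum is $+\infty$. If $u\in\Lambda$, the necessity direction of Theorem~\ref{mainthmon} (contrapositive) tells us that no element of $\fmong(u)$ is usefully decreasing, i.e. $\res(u,h)\cap[\mon(h,\beta),+\infty)=\emptyset$ for every $(h,\beta)\in\fmong(u)$; thus every inner infimum is taken over the empty set and equals $+\infty$. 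In both situations the right-hand side equals $+\infty=\bigks_u$.

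For the case $\gsls{u}\neq\emptyset$ (so $\bigks_u<+\infty$), I would first prove the lower bound $\ge\bigks_u$. Fix $(h,\beta)\in\fmong(u)$ and $k\in\res(u,h)$ with $k\ge\mon(h,\beta)$ (if no such $k$ exists the inner infimum is $+\infty$). Were $\Fu(k,h,\beta)<\bigks_u$, the fourth statement of Theorem~\ref{mainprop}, applied with $j=\bigks_u>\Fu(k,h,\beta)$, would yield $u_{\bigks_u}<u_k\le\valu{u}=u_{\bigks_u}$, a contradiction; so $\Fu(k,h,\beta)\ge\bigks_u$ and the right-hand side is $\ge\bigks_u$. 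For the matching upper bound and the attainment I would use Theorem~\ref{funatt}: it produces $a>0$, $b\in(0,1)$, $c\in\rr$ with $\{k:u_k>c\}\ne\emptyset$, $u_k\le ab^k+c$ for all $k$, and $u_{\bigks_u}=ab^{\bigks_u}+c$. Taking the constant pair $h_n\equiv(x\mapsto ax+c)\in\fset$ and $\beta_n\equiv b$ gives $(h,\beta)\in\fcst(u)\subseteq\fmong(u)$ with $\mon(h,\beta)=0$ and $\res(u,h)=\{k:u_k>c\}\ni\bigks_u$ (because $ab^{\bigks_u}>0$); using $(x\mapsto ax+c)^{-1}(y)=(y-c)/a$ one computes
\[
\Fu(\bigks_u,h,\beta)=\frac{\ln\!\big((u_{\bigks_u}-c)/a\big)}{\ln b}=\frac{\ln(b^{\bigks_u})}{\ln b}=\bigks_u .
\]
Hence the inner infimum over this pair is $\le\bigks_u$, which combined with the lower bound gives equality; and since this value $\bigks_u$ is actually attained (at this pair, with $k=\bigks_u$), both infima may be replaced by minima.

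I do not expect a genuine obstacle here: the statement is essentially a repackaging of Theorems~\ref{funatt}, \ref{mainthmon} and~\ref{mainprop}. The one point requiring care is the degenerate case $\gsls{u}=\emptyset$ --- and in particular $u\notin\Lambda$ --- where the ``formula'' is an identity between infima over possibly empty index sets (or over $\fmong(u)=\emptyset$), so one must make sure the convention $\inf\emptyset=+\infty$ is invoked consistently and that the equivalence with $\bigks_u=+\infty$ is justified via the preliminary propositions rather than via $\Fu$ itself.
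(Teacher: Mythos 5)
Your proposal is correct and follows essentially the same route as the paper: the same case split on whether $\gsls{u}$ is empty (using Propositions~\ref{notinlambda}, \ref{posetdelta}, \ref{nonuppbound} and Theorem~\ref{mainthmon} for the infinite case), the lower bound via the fourth statement of Theorem~\ref{mainprop} (which is exactly how the paper's Proposition~\ref{inter} obtains it), and attainment via the optimal affine constant pair from Theorem~\ref{funatt}, which is precisely the content of Corollary~\ref{maincoro} that the paper invokes. The only differences are cosmetic (inlining those two citations), so there is nothing to add.
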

		\begin{proof}
			Let us define 
			\[
			\ff:=\inf_{(h,\beta)\in\fmong(u)}\ \inf_{\substack{k\in\res(u,h)\\ k\geq \mon(h,\beta)}} \Fu(k,h,\beta).
			\]
			
			If $u\notin \Lambda$, then from Proposition~\ref{nonuppbound}, $\fmong(u)=\emptyset$ and thus $\ff=+\infty$. Moreover, from Proposition~\ref{argmax}, $\bigks_u=+\infty$.
			
			Suppose that $u\in \Lambda$. From Proposition~\ref{nonuppbound}, $\fmong(u)\neq\emptyset$. 
			Assume that $\gs_u=\emptyset$. This is the same as $\bigks_u=+\infty$. From Proposition~\ref{argmax}, $\gsls{u}=\emptyset$ and from Theorem~\ref{mainthmon}, there does not exist a usefully decreasing element i.e. for all $(h,\beta)\in\fmong(u)$, $\res(u,h)\cap [\mon(h,\beta),+\infty)=\emptyset$ which leads to $\ff=+\infty$. Assume that $\gs_u\neq\emptyset$ and thus $\gsls{u}\neq \emptyset$. From the second statement of Proposition~\ref{inter}, $\bigks_u\leq \ff$. Moreover, from Corollary~\ref{maincoro}, there exists \blue{$(h,\beta)\in\fmong(u)$ such that $\mon(h,\beta)=0$} and $u_{\bigks_u}=h_{\bigks_u}(\beta_{\bigks_u}^{\bigks_u})>h_{\bigks_u}(0)$ and $\ff<+\infty$. Taking the inverse of $h_{\bigks_u}$ on $[0,1]$ and the natural logarithm, we get $\bigks_u=\Fu(\bigks_u,h,\beta)$. By definition of $\ff$, $\ff\leq\Fu(\bigks_u,h,\beta) =\bigks_u$. We conclude that $\bigks_u=\ff$.
		\end{proof}
		\subsection{Resolution of Problem~\ref{mainpb} using the upper bound functional $\Fu$}
		\label{subsec:algo}
		As the upper bound \red{functional} $\Fu$ provides an upper bound of $\bigks_u$ for a given $u\in\Lambda$ with $\gsls{u}\neq \emptyset$, we can solve Problem~\ref{mainpb} in finite time \red{by} truncating $u$ \blue{using} $\lfloor\cdot\rfloor\circ \Fu$. This is formally described in Theorem~\ref{summary3}. The definition of the truncation index depends on the \blue{monotonicity properties} of the upper bound over $u$. 
		
		\begin{theorem}[Obtaining a solution to Problem~\ref{mainpb}]
			\label{summary3}
			Let $u\in\Lambda$ with $\gsls{u}\neq \emptyset$. Let $(h,\beta)\in\fmong(u)$ be \red{usefully} decreasing for $u$. Then :
			\[
			\max_{n\in\nn} u_n=\max\{u_n : n= 0,\ldots,\mon(h,\beta),\ldots,\min_{k\in\res(u,h)\cap [\mon(h,\beta),+\infty)}\lfloor\Fu(k,h,\beta)\rfloor\}.
			\]
			\comment{
			If moreover, \blue{$\mon(h,\beta)=0$}, then:
			\[
			\max_{n\in\nn} u_n=\max\{u_n : n= 0,\ldots,\min_{k\in\res(u,h)}\lfloor\Fu(k,h,\beta)\rfloor\}.
			\]  
			}
			If, finally, \blue{$(h,\beta)\in\fcste(u)$ with $\cst(h,\beta)=0$}, then:
			\[
			\max_{n\in\nn} u_n=\max\{u_n : n= 0,\ldots,\lfloor\Fu(\bigks_u,h,\beta)\rfloor\}.
			\]  
		\end{theorem}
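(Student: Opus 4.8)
The plan is to reduce each of the three displayed equalities to the single fact, already available, that the truncation index used on the right-hand side is $\geq \bigks_u$, together with the fact that $\bigks_u\in\agmx(u)$ (Proposition~\ref{argmax}, statement 5, which applies because $\gsls{u}\neq\emptyset$ forces $\gs_u\neq\emptyset$ via Proposition~\ref{posetdelta}). Once we know the truncation index $K$ satisfies $K\geq\bigks_u$, we have $u_{\bigks_u}=\max_{n\in\nn}u_n$ and $\bigks_u\in\{0,\ldots,K\}$, so $\max\{u_n:n=0,\ldots,K\}\geq u_{\bigks_u}=\max_{n\in\nn}u_n$; the reverse inequality is trivial since we are maximizing over a subset of $\nn$. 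Hence the whole theorem is a corollary of the lower bounds on the various truncation indices.

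First I would treat the general (eventually decreasing) case. Here $K:=\min_{k\in\res(u,h)\cap[\mon(h,\beta),+\infty)}\lfloor\Fu(k,h,\beta)\rfloor$. Since $(h,\beta)$ is usefully decreasing, this set is nonempty, so $K$ is well-defined and finite (each $\Fu(k,h,\beta)$ with $k\in\res(u,h)$ is a real number by statement 1 of Theorem~\ref{mainprop}). By statement 2(a) of Proposition~\ref{inter}, for every $j\geq\mon(h,\beta)$ we have $\bigks_u\leq\Fu(j,h,\beta)$; applying this to each $k$ in the index set and taking floors gives $\bigks_u\leq\lfloor\Fu(k,h,\beta)\rfloor$ (an integer lower bound on a real number passes to the floor), hence $\bigks_u\leq K$. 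That is exactly what we need. Note this also covers the claim ``$n=0,\ldots,\mon(h,\beta),\ldots,K$'' in the statement, since $\mon(h,\beta)\leq K$ follows from statement 3 of Theorem~\ref{mainprop} ($\Fu(k,h,\beta)\geq k\geq\mon(h,\beta)$ for the relevant $k$).

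Next the decreasing case: if $(h,\beta)\in\fmon(u)$ then $\mon(h,\beta)=0$ and ``useful'' coincides with ``usefully decreasing,'' so $\res(u,h)\cap[\mon(h,\beta),+\infty)=\res(u,h)$ and the truncation index becomes $\min_{k\in\res(u,h)}\lfloor\Fu(k,h,\beta)\rfloor$; the bound $\bigks_u\leq$ this quantity is then just statement 1 of Corollary~\ref{intercst} (or directly statement 1 of Proposition~\ref{inter} applied with $j\in\res(u,h)$ arbitrary, noting $\Fu(j,h,\beta)\geq j$ handles $j\geq\bigks_u$ and statement 4 of Theorem~\ref{mainprop} handles $j<\bigks_u$). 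Finally, the constant case: if $(h,\beta)\in\fcst(u)$ then $\cst(h,\beta)=\mon(h,\beta)=0\leq\bigks_u$, so statement 3 of Proposition~\ref{inter} gives $\min_{k\in\res(u,h)}\Fu(k,h,\beta)=\Fu(\bigks_u,h,\beta)$; thus $\lfloor\Fu(\bigks_u,h,\beta)\rfloor\geq\bigks_u$ (again by Proposition~\ref{inter}, statement 1), and the truncation at $\lfloor\Fu(\bigks_u,h,\beta)\rfloor$ captures $\bigks_u$.

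The only genuinely delicate point is bookkeeping rather than mathematics: making sure that in each case the relevant index set is nonempty so that the $\min$ exists, and that ``$\bigks_u\leq$ a real number'' legitimately upgrades to ``$\bigks_u\leq$ its floor'' (true because $\bigks_u\in\nn$). Everything substantive has already been proved in Theorem~\ref{mainprop}, Proposition~\ref{inter}, and Corollary~\ref{intercst}; the proof of Theorem~\ref{summary3} is essentially the observation that a finite truncation of $u$ beyond its greatest maximizer still contains the supremum.
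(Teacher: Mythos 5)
Your proposal is correct and follows essentially the same route as the paper, whose proof is simply to combine Proposition~\ref{argmax} (statement 5, giving $\bigks_u\in\agmx(u)$ via $\gs_u\neq\emptyset$) with Proposition~\ref{inter} (and its constant-case refinement) to see that each truncation index dominates $\bigks_u$. You merely spell out the bookkeeping (floors, nonemptiness of the index sets, the special cases $\fmon(u)$ and $\fcst(u)$) that the paper leaves implicit.
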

		\begin{proof}
			\blue{A direct application of Proposition~\ref{inter} and Corollary~\ref{intercst}.}
		\end{proof} 
		
		We present \blue{two} theoretical algorithms based on Theorem~\ref{summary3}. These algorithms compute $\valu{u}$ and the smallest element of $\agmx(u)$. Algorithm~\ref{algomaingen} describes the most general situation, \blue{in which} the computations are \red{performed} from an upper bound $(h,\beta)\in\fmong(u)$. \blue{This includes the case \blue{in which} the upper bound $(h,\beta)$ satisfies $\mon(h,\beta)=0$. In this case, Algorithm~\ref{algomaingen} can be simplified}.  Algorithm~\ref{algomaincst} deals with an upper bound $(h,\beta)\in\fcste(u)$ with $\cst(h,\beta)=0$. In the presented algorithms, the upper bounds $(h,\beta)$ are supposed to be usefully decreasing \blue{(equivalent to being useful when $\mon(h,\beta)=0$ or $\cst(h,\beta)=0$)}.
		
		\begin{algorithm}[h!]
			\DontPrintSemicolon
			
			\Input{$u\in\Lambda$ with $\gsls{u}\neq \emptyset$ and $(h,\beta)\in\fmong(u)$ usefully decreasing} 
			\Output{$\valu{u}$ and $\bigkse_u=\min\agmx(u)$}
			\Begin{
				$k=0$; $K=+\infty$, $\vmax=-\infty$, $\kmax=0$\;
				\While{$k<\mon(h,\beta)$}
				{
					\If{$\vmax< u_k$}{
						$\vmax=u_k$\;
						$\kmax=k$\;
					}
					$k=k+1$\;
				}
				\While{$k\leq K$}
				{
					\If{$k\in\res(u,h)$}{
						$K=\min\{K,\lfloor\Fu(k,h,\beta)\rfloor\}$\;
						\If{$\vmax< u_k$}{
							$\vmax=u_k$\;
							$\kmax=k$\;
						}
					}
					$k=k+1$\;
				}
				$\valu{u}=\vmax$\;
				$\bigkse_u=\kmax$\;
			}
			\caption{Resolution of Problem~\ref{mainpb} with an eventually and usefully decreasing upper bound}
			\label{algomaingen}
		\end{algorithm}
		
		\blue{Following the second statement of Proposition~\ref{inter}, in} Algorithm~\ref{algomaingen}, to compute a truncation index using $\Fu$, we \red{must} go over $\mon(h,\beta)$ to ensure that we compute an upper bound of $\bigks_u$. Once $\mon(h,\beta)$ \red{is} achieved, $\Fu(k,h,\beta)$ is guaranteed to be greater than $\bigks_u$. However, we do not know for which $k$ greater than $\mon(h,\beta)$, $\Fu(k,h,\beta)$ is minimal. \red{Updating} the truncation index $K$ \red{reduces} the number of iterations. In this context, $u_k$ is \red{assumed to be easily computable}. In \blue{certain} situations, the computation of $u_k$ can be extremely expensive (e.g., the optimal value of an optimization problem~\cite{adje13052025}). Finally, the integer maximizer found is the minimum of $\agmx(u)$, as the update condition at Line 11 in Algorithm~\ref{algomaingen} is $\vmax<u_k$. To get $\bigks_u$, it suffices to replace $\vmax<u_k$ by $\vmax\leq u_k$. \blue{Note that the while loop from Line 3 to Line 7 in Algorithm~\ref{algomaingen} is not executed when $\mon(h,\beta)=0$. Thus, after initialization, Algorithm~\ref{algomaingen} starts directly at Line 8 when $\mon(h,\beta)=0$.}
	\comment{	
		\begin{algorithm}[h!]
			\DontPrintSemicolon
			
			\Input{$u\in\Lambda$ with $\gsls{u}\neq \emptyset$ and $(h,\beta)\in\fmon(u)$ useful} 
			\Output{$\valu{u}$ and $\bigkse_u=\min\agmx(u)$}
			\Begin{
				$k=0$; $K=+\infty$, $\vmax=-\infty$, $\kmax=0$\;
				\While{$k\leq K$}
				{
					\If{$k\in\res(u,h)$}{
						$K=\min\{K,\lfloor\Fu(k,h,\beta)\rfloor\}$\;
						\If{$\vmax< u_k$}{
							$\vmax=u_k$\;
							$\kmax=k$\;
						}
					}
					$k=k+1$\;
				}
				$\valu{u}=\vmax$\;
				$\bigkse_u=\kmax$\;
			}
			\caption{Resolution of Problem~\ref{mainpb} with a useful decreasing upper bound}
			\label{algomaindec}
		\end{algorithm}
		\blue{Following the second statement of Proposition~\ref{inter},} in Algorithm~\ref{algomaindec}, we can start computing a truncation index from $\Fu$ from $k=0$ as $\Fu(k,h,\beta)$ is guaranteed to be greater than $\bigks_u$ since $(h,\beta)$ is decreasing. However, again, we do not know \red{which} index $k$ \red{minimizes} $\Fu(k,h,\beta)$.
	}
		\begin{algorithm}[h!]
			\DontPrintSemicolon
			
			\Input{$u\in\Lambda$ with $\gsls{u}\neq \emptyset$ and \blue{$(h,\beta)\in\fcste(u)$ useful with $cst(h,\beta)=0$}} 
			\Output{$\valu{u}$ and $\bigkse_u=\min\agmx(u)$}
			\Begin{
				$k=0$; $K=+\infty$, $\vmax=-\infty$, $\kmax=0$\;
				\While{$k\leq K$}
				{
					\If{$k\in\res(u,h)$}{
						\If{$\vmax< u_k$}{
							$K=\lfloor\Fu(k,h,\beta)\rfloor$\;
							$\vmax=u_k$\;
							$\kmax=k$\;
						}
					}
					$k=k+1$\;
				}
				$\valu{u}=\vmax$\;
				$\bigkse_u=\kmax$\;
			}
			\caption{Resolution of Problem~\eqref{mainpb} with a useful constant upper bound}
			\label{algomaincst}
		\end{algorithm}
		In Algorithm~\ref{algomaincst}, when $(h,\beta)$ is chosen in $\fcste(u)$ and is such that $\cst(h,\beta)=0$, we can exploit the second statement of Corollary~\ref{mainpropmon} and update the truncation index $K$ only when $\vmax$ is updated. The third statement of \blue{Corollary~\ref{intercst}} proves that the stopping integer cannot be updated once $\bigkse_u$ \red{is reached}.
		
\blue{		
\begin{remark}
For simplicity, the algorithm based on an element $(h,\beta)\in\fcste(u)$ has not been presented. This algorithm would be a combination of the two algorithms that we provide. Within the "if" branch in Algorithm~\ref{algomaingen} on Line 10, we would maintain the same update for the truncation index until $c(h,\beta)-1$ (if the index lies in $\res(u,h)$). When $c(h,\beta)$ would be achieved, we would switch to the truncation index update on Line 6 in Algorithm~\ref{algomaincst}, still under the conditions $k\in \res(u,h)$ and $vmax<u_k$. 
\end{remark}		
}
		\section{Applications}
		\label{applis}

\blue{
We illustrate our techniques using concrete sequences, including the Fibonacci, logistic, and Syracuse sequences, as well as the (linear) norm-based peak computation problem.   
}		
		
		\subsection{A simple case}
		\red{First, we begin} with a very simple example to illustrate our approach. Let us consider, for $a\in\nn^*$, the sequence $x:=(a^n (n!)^{-1})_{n\in\nn}$. As presented in Problem~\ref{mainpb}, we are interested in the term $x_n$ which maximizes the sequence $x$ and \red{a} maximizer index. We apply our techniques to compute \red{these values}. It is evident that alternative approaches (the study of the variation of the sequence) can solve the problem \red{more easily}. For all $n\in\nn$, the terms $x_n$ \red{can be rewritten as}
		\[
		x_n=\dfrac{a^n}{n!}=\left(\dfrac{a}{a+1}\right)^n\dfrac{(a+1)^{n}}{n!}.
		\]
		Hence, for all $n\in\nn$, \red{we have}
		\[
		x_n=h_n(\beta^n)\text{ where } h_n:[0,1]\ni t\to t\dfrac{(a+1)^n}{n!} \text{ and }\beta=\dfrac{a}{a+1}
		\]
		It is straightforward to see that $h=(h_n)_{n\in\nn}$ is eventually decreasing and $\mon(h)=a$. Moreover, as for all $n\in\nn$, $h_n(0)=0$, we have $\res(x,h)=\nn$. Then, following Algorithm~\ref{algomaingen}, we compute $\mathfrak{F}_x(k,h,\{\beta\}_{n\in\nn})$ from $k=a$. Then we have:
		\[
		\mathfrak{F}_x(a,h,\{\beta\}_{n\in\nn})=\dfrac{\ln\left(h_a^{-1}(x_a)\right)}{\ln(\beta)}=\dfrac{\ln\left(\dfrac{a^a}{a!}\times \dfrac{a!}{(a+1)^a}\right)}{\ln\left(\dfrac{a}{a+1}\right)}=
		\dfrac{a\ln\left(\dfrac{a}{a+1}\right)}{\ln\left(\dfrac{a}{a+1}\right)}=a.
		\]
		We conclude that we have to compare the $a+1$ first terms of the sequence $x$ to find $\valu{x}$ and \red{an} integer maximizer of $x$. Using alternative approaches, we know that the maximal term of the sequence $x$ is \red{achieved} at $k=a$.
		
		This example shows why we allow sequences of functions in \red{the} upper bounds of the analyzed sequence, whereas Theorem~\ref{funatt} suggests \red{using} a constant sequence determined by an affine function. For example, \blue{by} defining $g:t\to t(a+1)^a$, we have for all $n\in\nn$, $x_n\leq g(\beta^n)$. However, as $h_n\leq g$ for all $n\in\nn$, we have from Corollary~\ref{coroinc} $\blue{\mathfrak{F}_x}(a,h,\{\beta\})\leq \blue{\mathfrak{F}_x}(a,\{g\},\{\beta\})$. The main advantage of using a constant sequence is that we can compute a truncation index from $n=0$ and $\blue{\mathfrak{F}_x}(n,\{g\},\{\beta\})$ is an upper bound for $\bigks_x$ for all $n\in\nn$. More precisely, for all $n\in\nn$, \red{we have}
		\[
		\mathfrak{F}_x(n,\{g\},\{\beta\})=\dfrac{\ln(g^{-1}(x_n)}{\ln(\beta)}=\dfrac{\ln\left(\dfrac{a^n}{a!(a+1)^a}\right)}{\ln\left(\dfrac{a}{a+1}\right)}=
		\dfrac{\ln\left(\dfrac{a^{n-a}}{a!}\right)}{\ln\left(\dfrac{a}{a+1}\right)}+a.
		\] 
		We \blue{observe} the third statement of Theorem~\ref{mainprop}, that is, as $x$ is increasing on $[0,a]\cap \nn$, $\mathfrak{F}_x(n,\{g\},\{\beta\})$ is decreasing on $[0,a]\cap \nn$. The smallest value of $\mathfrak{F}_x(n,\{g\},\{\beta\})$ is achieved at $n=a$ and
		\[
		\mathfrak{F}_x(a,\{g\},\{\beta\})=
		-\dfrac{\ln(a!)}{\ln\left(\dfrac{a}{a+1}\right)}+a.
		\]
		This leads to, for example, \blue{for $a=2$,} $\lfloor\mathfrak{F}_x(2,\{g\},\{\beta\})\rfloor=3$, \blue{for $a=5$,}  $\lfloor\mathfrak{F}_x(5,\{g\},\{\beta\})\rfloor=31$ and \blue{for $a=10$,} $\lfloor\mathfrak{F}_x(10,\{g\},\{\beta\})\rfloor=168$. The overapproximation gap completely explodes. This justifies \red{the selection of} the tightest possible upper bound \blue{for} the analyzed sequence, even if the upper bound is not a constant sequence.
		
		\subsection{Logistic \red{sequences}}
		Let \red{us} consider \red{the} logistic sequences parameterized by $r\in (0,4]$ starting at $y_0\in (0,1)$. More precisely, we have for all $n\in\nn$:
		\[
		y_{n+1}:=ry_n(1-y_n)
		\]
		We can exploit closed form \blue{expressions} for $r\in (0,1)$. For $r\in (0,1)$, from~\cite[Lemma A.6]{campbell2019automated}, we have at hand the upper-bound for all $n\in\nn$, $y_n\leq y_0(r^{-n}+y_0 n)^{-1}$. We then have for all $n\in\nn$, $y_n\leq h_n(\beta^n)$ where $h_n:(0,1]\ni t\mapsto y_0(t^{-1}+n y_0)^{-1}$ and $h_n(0)=0$ and $\beta=r$. Then, for all $n\in\nn$, $h_n$ belongs to $\fset$ and the sequence $(h_n)_n$ is decreasing. Moreover, $y_0=h_0(1)$ and we can then apply the third statement of Proposition~\ref{nonemptyfun} to conclude that $\valu{y}=y_0$ and $\agmx(y)=\{0\}$. 
		
		In the general setting $r\in (0,4]$, we can always find an upper bound $(h(\beta^n))_n$ for $y$. Indeed, as $t\mapsto rt(1-t)$ is bounded by $r/4$ on $[0,1]$. Then we have $y_n\leq r/4+\beta^n$ for all $\beta\in [0,1)$. Hence, we have $y_n\leq h(\beta^n)$ where $h:x\mapsto r/4+x$. However, this function $h$ is not useful since $y_n\leq h(0)$ for all $n\in\nn$. The key difficulty is to find a useful functional upper bound \red{for} the sequence. 
		
		\subsection{The sequence of the ratio of two consecutive terms of a Fibonacci sequence}
		In this example, we consider the ratio \blue{of two consecutive terms} of some Fibonacci sequences starting from $u_0\geq 0$ and $u_1>u_0\phi$ (for simplicity), where $\phi=(1+\sqrt{5})/2$ (the golden number). By the definition of Fibonacci sequences, we have for all $n\in\nn$, $u_{n+2}=u_{n+1}+u_n$. It is well-known that for all $n\in\nn$, $u_n=A\phi^n+B(-\phi)^{-n}$ where $A=\frac{u_0+u_1\phi}{1+\phi^2}$ and 
		$B=\frac{(u_0\phi-u_1)\phi}{1+\phi^2}$. From, the assumption on $u_0$ and $u_1$, we have $A>0$ and $B<0$ but $u_n>0$ for all $n\in\nn^*$. Let us define:
		\[
		\blue{z}_0:=\left\{\begin{array}{lr}u_{1}/u_0 & \text{ if } u_0> 0\\
			0 & \text{ if } u_0=0\end{array}
		\right. \text{ and for all } n\in\nn^*, \blue{z}_n:=\dfrac{u_{n+1}}{u_n} \enspace .
		\]
		We have for all $n\in\nn^*$
		\[
		\blue{z}_n=\phi\left(1+\red{(-1)^{n+1}\frac{B\phi^{-2}+B}{A\phi^{2n}+B(-1)^n}}\right)\enspace .
		\]
		This proves that $\ls{\blue{z}}=\lim_{n\to +\infty} \blue{z}_n=\phi$. This also proves that for all odd $n$, $\blue{z}_n<\phi$ and for all \red{non zero even} $n$, $\blue{z}_n>\phi$. Now, as $B<0$ \blue{and for all $k\in\nn^*$, $A\phi^{2k}+B>0$}, we have for all $n\in\nn^*$
		\[
		\blue{z}_n\leq h(\beta^n)\text{ where } h:x \to \left\{\begin{array}{lr}\phi\left(1-\dfrac{B\phi^{-2}+B}{Ax^{-1}+B}\right) & \text{ if } x\neq 0\\ \\ \phi & \text{ if } x=0\end{array}\right. \text{ and }\beta=\phi^{-2}
		\] 
		with equality when $n$ is even. 
		
		The function $h$ is strictly increasing and continuous on $[0,1]$ and $\beta\in (0,1)$  thus \blue{$(\{h\}_n,\{\beta\}_n)\in\fcste(\blue{z})$ with $\cst(h,\beta)=0$}. When $n\in\nn^*$ is even and thus the equality holds, we have $h^{-1}(\blue{z}_n)=(\phi^{-2})^n$. For the case where $n=0$, we also have $\blue{z}_0\leq h(1)$. If $u_0$ is null, \blue{then } $0$ does not belong to $\res(\blue{z},h)$. If $u_0$ is strictly positive, the equality $\blue{z}_0=h(1)$ holds, and we can conclude from the third statement of Proposition~\ref{nonemptyfun} that $\valu{\blue{z}}=\blue{z}_0$ and $\agmx(\blue{z})=\{0\}$.
		
		Now, suppose that $u_0=0$. We can \blue{compute} $\mathfrak{F}_\blue{z}(k,\{h\}_{n\in\nn},\{\beta\}_{n\in\nn})$ for all $k$ such that $\blue{z}_k>h(0)=\phi$. Actually, $\res(\blue{z},h)=\{2k:k\in\nn^*\}$.
		For all $k\in\nn^*$, \[\mathfrak{F}_\blue{z}(2k,\{h\}_{n\in\nn},\{\beta\}_{n\in\nn})=\dfrac{\ln(h^{-1}(\blue{z}_{2k}))}{\ln(\phi^{-2})}=(2k)\dfrac{\ln(\phi^{-2})}{\ln(\phi^{-2})}.\] 
		We conclude that $\bigks_\blue{z}\leq 2$ and since $\blue{z}_0$ and $\blue{z}_1$ are smaller than $\phi$, $\bigks_\blue{z}=2$. 
		
		\subsection{Syracuse sequences}
		
		Let \red{us} consider a (accelerated) Syracuse iteration starting at $N\in \nn^*$. Formally, we define the following sequence
		\begin{equation}
			\label{syra}
			\blue{\omega}_0:=N\text{ and for all }n\in\nn,\ 
			\blue{\omega}_{n+1}:=\left\{ 
			\begin{array}{lr}
				\dfrac{\blue{\omega}_n}{2}& \text{ if } \blue{\omega}_n \text{ is even}\\
				\\
				\dfrac{3\blue{\omega}_n+1}{2}& \text{ if } \blue{\omega}_n \text{ is odd}
			\end{array}\right.
		\end{equation}
		
		The Collatz conjecture asserts that for all $N\in\nn^*$, the sequence $\blue{\omega}=(\blue{\omega}_n)_n$ defined according to \eqref{syra} admits an index $k\in\nn$ such that $\blue{\omega}_k=1$. 
		
		\blue{If $\omega_0=N\in\{1,2\}$, then $\valu{y}$ and $\ls{y}$ are equal to 2. In this case, following the second statement of Proposition~\ref{argmax}, our method cannot be applied. However, it is obvious that, if $N\in\{1,2\}$, there exists $k\in\nn$ such that $\omega_k=1$. Thus, we can assume that $N\geq 3$ and the Collatz conjecture is understood for any starting integer $N$ greater than 3.}   
		
		
		\begin{theorem}
			The Collatz conjecture holds \blue{(for $N\geq 3$)} if and only if there exists $(a,b,c)\in \rr_+\times (0,1)\times \blue{(2,3]}$ such that $\blue{\omega}_n\leq ab^n +c$ for all $n\in\nn$.  
		\end{theorem}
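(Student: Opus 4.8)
The plan is to establish, for each fixed integer $N\geq 5$, the equivalence ``the Collatz conjecture holds for the sequence $(y_n)$ of \eqref{syra} started at $y_0=N$'' $\iff$ ``there exist $a>0$, $b\in(0,1)$, $c\in(4,5]$ with $y_n\leq ab^n+c$ for all $n$''; the theorem as stated then follows by quantifying this equivalence over $N\geq 5$.

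For the forward direction, assume $y_k=1$ for some $k$. Since $1$ is odd and $2$ is even, $y_{k+1}=2$, $y_{k+2}=1$, and more generally $y_n\in\{1,2\}$ for every $n\geq k$. Fix $c:=5$ and $b:=1/2$. For $n\geq k$ we have $y_n\leq 2<c\leq ab^n+c$ for every $a>0$, while for the finitely many indices $n<k$ it suffices that $ab^n\geq y_n-c$; taking $a:=1+\max_{0\leq n<k}(y_n-c)b^{-n}$ makes all these inequalities hold and keeps $a>0$. (Alternatively, one can stay inside the paper's framework: when Collatz holds for $N$ the orbit is eventually $1,2,1,2,\dots$, so $y\in\Lambda$ and $\ls y=2<N\leq\valu y$, whence $\gsls y\neq\emptyset$; Theorem~\ref{funatt} then yields exactly such a triple, and its proof lets $c$ be prescribed anywhere in $(\ls y,\valu y)=(2,\valu y)$, so one may take $c\in(4,5]$ since $\valu y\geq N\geq 5>4$.)

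For the converse, assume $y_n\leq ab^n+c$ for all $n$, with $c\leq 5$. Then $y_n\leq a+c$ for all $n$, so $y\in\Lambda$, and $\ls y=\limsup_n y_n\leq\lim_n(ab^n+c)=c\leq 5$. Because the $y_n$ are positive integers, this forces $y_n\leq 5$ for all sufficiently large $n$. I then run a short finite descent on $L:=\limsup_n y_n\in\{1,\dots,5\}$, using three facts about the accelerated map $S$: $S(5)=8$, the only $S$-preimage of $4$ is $8$, and the only $S$-preimage of $3$ is $6$. If $L=5$, then $y_n=5$ infinitely often; choosing such an index beyond the point where $y_n\leq 5$ always holds gives $y_{n+1}=8>5$, a contradiction, so in fact $y_n\leq 4$ eventually. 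If $L=4$, then some large $n$ with $y_n=4$ would need $y_{n-1}=8$, contradicting $y_{n-1}\leq 4$, so $y_n\leq 3$ eventually. If $L=3$, some large $n$ with $y_n=3$ would need $y_{n-1}=6$, again impossible, so $y_n\leq 2$ eventually. Hence $y_n\in\{1,2\}$ for all large $n$, and since $S(1)=2$ and $S(2)=1$ the orbit alternates between $1$ and $2$ from that point on; in particular $y_k=1$ for some $k$, i.e. Collatz holds for $N$.

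The crux — and the place where the precise window $(4,5]$ is used — is the converse: the bound $c\leq 5$ is exactly what pins $\ls y$ low enough for the descent to terminate at the trivial cycle $\{1,2\}$ (the ``first obstruction'' above $2$ being $5\mapsto 8$, with $3$ and $4$ reachable only from the larger values $6$ and $8$), while $c>4$ together with $\valu y\geq N\geq 5$ is what keeps the forward construction legitimate. Everything else is routine: the elementary choice of $a$ from a finite initial segment, and the observation that a bounded positive-integer orbit of $S$ with $\limsup\leq 2$ necessarily enters the cycle $1\leftrightarrow 2$ and therefore hits $1$. I do not anticipate a genuine obstacle beyond keeping the three ``eventually'' thresholds straight through the successive descent steps.
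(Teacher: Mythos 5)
Your proof is correct, and it reaches the same two pillars as the paper's argument, but by a partly different and more self-contained route. For the ``only if'' direction the paper invokes the construction of Theorem~\ref{funatt} (choosing $c\in(\ls{y},\valu{y})\cap(4,5]$ and reading off $a,b$ from that proof), whereas you build the triple by hand: $c=5$, $b=1/2$, and $a$ chosen large enough to dominate the finite initial segment before the orbit settles into $\{1,2\}$. This is more elementary, needs no machinery from Section~\ref{mainres}, and incidentally uses the correct limit cycle of the accelerated map ($1\leftrightarrow 2$, so $\ls{y}=2$), where the paper slips by citing the unaccelerated cycle $(1,4,2,1)$ and $\ls{y}=4$ (harmless there, since $(4,5]$ still sits above $2$). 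For the ``if'' direction both proofs use that $c\leq 5$ forces the integer orbit eventually into $\{1,\dots,5\}$; the paper then excludes $3$ and $5$ by iterating forward ($3\mapsto 5\mapsto 8$), while you run a $\limsup$ descent using $5\mapsto 8$ and the preimage facts that $4$ and $3$ are reachable only from $8$ and $6$ --- equivalent finite checks, yours slightly more systematic about the successive ``eventually'' thresholds. Two cosmetic remarks: in your parenthetical funatt-based alternative, taking $c=5$ requires $\valu{y}>5$, which does hold (any orbit from $N\geq 5$ exceeds $5$, e.g.\ $5\mapsto 8$) but deserves a word, or simply take $c\in(4,5)$; and your closing claim that $c>4$ is what ``keeps the forward construction legitimate'' is not quite the role it plays in your own argument --- there $c>4$ is only needed so that your chosen $c$ lands in the prescribed window, while the converse uses only $c\leq 5$.
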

		
		\begin{proof}
			If the Collatz conjecture holds, then \blue{$\ls{\blue{\omega}}=2$} as \blue{the sequence $\omega$ equals $(1,2,1,2,\ldots)$ from the index $k$ such that $\omega_k=1$.} Since we suppose that \blue{$N\geq 3$, $\ls{\blue{\omega}}<N\leq \valu{\blue{\omega}}$ and thus } $\gsls{u}$ is nonempty. We can follow the proof of \blue{Th.}~\ref{funatt} and choose $c\in \blue{(2,3]}\subset (\ls{\blue{\omega}},\valu{\blue{\omega}})$. Again, following the proof of Th~\ref{funatt}, we deduce suitable $a$ and $b$ to obtain the inequality.
			
			Suppose that the inequality holds with $(a,b,c)\in \rr_+\times (0,1)\times \blue{(2,3]}$. As $c\in \blue{(2,3]}$, then $\ls{\blue{\omega}}\leq c\leq \blue{3}$. As $y$ is a sequence of strictly positive integers, it implies that there exists $M\in\nn$ such that $\blue{\omega}_n\in\blue{\{1,2,3\}}$ for all $n\geq M$. The value \blue{3 is not possible}. Indeed, if $\blue{\omega}_n=3$, then $\blue{\omega}_{n+1}=5$ which is not less than 3. \blue{We conclude that  $\omega_n\in\{1,2\}$ for all $n\geq M$ which proves the result.} 
		\end{proof}
		
		\subsection{Linear \red{discrete}-time dynamical systems}
		
		\subsubsection{General case}
		Let \red{us} consider a $d\times d$ matrix $A$ and the trajectories $x_{k+1}= A x_k$ starting from $x_0\in\rd$. We suppose that $A$ is stable\red{, that is,} its spectral radius is strictly less than 1.
		
		Recall that all positive definite $d\times d$ matrices $Q$ define a norm on $\rd$ as follows: $\norm{x}_Q=\sqrt{x^\intercal Q x}$. From this, we can define an operator norm for $A$ associated \red{with} $Q$ as follows: $\norm{A}_Q^2=\max_{x\neq 0}  \norm{Ax}_Q/\norm{x}_Q$. This norm is submultiplicative i.e. $\norm{BC}_Q\leq \norm{B}_Q\norm{C}_Q$ and thus implies that $\norm{A^k}_Q\leq \norm{A}_Q^k$. The fact that $A$ is stable is equivalent to \red{the fact} that for all matrix norms $\norm{A^k}$ tends to 0 as $k$ goes to $+\infty$.  
		
		In \cite{ahiyevich2018upper,dowler2013bounding,7984147}, the authors are interested in the maximum of the Euclidean matrix norm of the powers of $A$ i.e. $\max_{k\in\nn}\|A^k\|_2$ recalling that $\norm{A}_2^2=\max_{x\neq 0}\norm{Ax}_2/\norm{x}_2$. Following~\cite{adje13052025}, we can use \red{the} solutions of the discrete Lyapunov equation\red{, that is,} a matrix from $\mathcal{L}_A:=\{P \succ 0 : P-A^\intercal P A\succ 0\}$ where the symbol $R \succ 0$ means that $R$ is positive definite. Now, taking $P\in\mathcal{L}_A$, we can define the operator norm $\norm{A}_P$ as $P$ is positive definite. The main advantage of using \red{the} solutions of the discrete Lyapunov equation is to ensure that $\norm{A}_P<1$. Moreover, let $k\in\nn$ and $x\in\rd$ such that $A^k x\neq 0$. Then:
		
		\renewcommand{\arraystretch}{2}
		
		\[
		\begin{array}{lcl}
			\displaystyle{\dfrac{x^\intercal {A^k}^\intercal A^k x}{x^\intercal x}}&=&\displaystyle{\dfrac{x^\intercal {A^k}^\intercal A^k x}{x^\intercal {A^k}^\intercal P {A^k} x}\dfrac{x^\intercal {A^k}^\intercal P A^k x}{x^\intercal P x}\dfrac{x^\intercal P x}{x^\intercal x}}\\
			&\leq &\displaystyle{\max_{y\neq 0} \dfrac{y^\intercal y}{y^\intercal P y}\norm{A^k}_P^2\lmax{P}}\\
			&=&\displaystyle{\dfrac{1}{\displaystyle{\min_{y\neq 0} \frac{y^\intercal P y}{y^\intercal y}}}\norm{A^k}_P\lmax{P}}\\
			&=&\displaystyle{\dfrac{\lmax{P}}{\lmin{P}}\norm{A^k}_P^2}\leq\displaystyle{\dfrac{\lmax{P}}{\lmin{P}}\norm{A}_P^{2k}}
		\end{array}
		\]
		If $A^k$ is not invertible, the inequality also holds for all non zero $x\in\rd$ in the kernel of $A^k$. Finally, taking the maximum over all non zero $x\in\rd$, we \blue{obtain} for all $k\in\nn$
		\[
		\norm{A^k}_2^2\leq h_P(\beta_P^k) \text{ where } h_P:t \to \dfrac{\lmax{P}}{\lmin{P}}t \text{ and }
		\beta_P=\norm{A}_P^2\enspace.
		\]
		Finally, let for all $k\in\nn$, $\blue{\nu}_k:=\norm{A^k}_2^2$ and define $\blue{\nu}=(\blue{\nu}_k)_{k\in\nn}$ we have $(h_P,\beta_P)\in\fcste(\blue{\nu})$ with $\cst(h_P,\beta_P)=0$. We can apply our method to compute the maximal value of \red{the} sequence $\blue{\nu}$. As the terms $\blue{\nu}_k$ are strictly positive and for all $P\in\mathcal{L}_A$, $h_P(0)=0$, we have $\res(\blue{\nu},h_P)=\nn$. Moreover, we have
		\[
		\mathfrak{F}_{\blue{\nu}}(k,\{h_P\}_{n\in\nn},\{\beta_P\}_{n\in\nn})
		=\dfrac{\ln\left(h_P^{-1}(\blue{\nu}_k)\right)}{\ln(\beta_P)}=\dfrac{\ln\left(\dfrac{\lmin{P}}{\lmax{P}}\norm{A^k}_2^2\right)}{2\ln(\norm{A}_P}.
		\]
		which provides \red{upper bounds for} the greatest maximizer of $\blue{\nu}$. 
		
		\renewcommand{\arraystretch}{1}
		
		\subsubsection{Numerical example}
		Let us consider \red{an} example developed in~\cite{ahiyevich2018upper}. Let us define $U$ as the $d\times d$ matrix such that $U_{1d}=1$ and $U_{ij}=0$ otherwise, and $\Idd$ \red{as} the identity matrix (of size $d\times d$). Now, let $\lambda$ be in $(-1,1)$ and define the matrix \blue{as follows}:
		\[
		A_\lambda:=\begin{pmatrix}
			\lambda & 0 & \ldots & 1\\
			0 & \ddots&\ddots & \vdots \\
			\vdots& \vdots& \lambda& 0\\ 
			0  &\ldots& 0& \lambda 
		\end{pmatrix}=\lambda \Idd + U
		\]
		Then, as $U^k=0$ for all $k\geq 2$, we have for all $k\in\nn$ $A_\lambda^k=(\lambda Id+U)^k=\lambda^k\Idd+k \lambda^{k-1}U$. Moreover, $\norm{A_\lambda^k}_2^2=\lmax{(A_\lambda^k)^\intercal A_\lambda^k}=\lambda^{2k-2}(\lambda^2+0.5k^2+0.5k\sqrt{4\lambda^2+k^2})$. 
		Thus, the goal is to compute $\max_{k\in\nn} \norm{A_\lambda^k}_2^2$ by giving an upper-bound of the (greatest) maximizer of $\max_{k\in\nn} \norm{A_\lambda^k}_2^2$.
		
		\red{The set} $\mathcal{L}_{A_\lambda}$ contains particular diagonal matrices. Indeed,
		$P\in \mathcal{L}_{A_\lambda}$ if and only if $P\succ 0$ and :
		\[
		P-(A_\lambda)^\intercal P (A_\lambda)=P(1-\lambda^2) -\lambda (U^\intercal P+ PU)+U^\intercal P U \succ 0
		\]
		To simplify, we consider diagonal matrices $P$, which leads to \red{considering} positive definite diagonal matrices $P$ such that the matrix
		\[
		P-A_\lambda^\intercal P A_\lambda=\begin{pmatrix}
			(1-\lambda^2)p_{1,1} & 0 & \ldots & -\lambda p_{11}\\
			0& (1-\lambda^2)p_{2,2} & \ddots & \vdots\\
			\vdots & \vdots &  \ddots & 0\\
			-\lambda p_{11} & 0 &\ldots & (1-\lambda^2) p_{d,d}-p_{1,1}
		\end{pmatrix}
		\]
		is positive definite.
		
		Then, we can take $p_{1,1}=p_{2,2}=\cdots=p_{d-1,d-1}=1$ and in this case, $P-A_\lambda^\intercal P A_\lambda \succ 0$ if and only if its determinant is strictly positive. As $\det(P-A_\lambda^\intercal P A_\lambda)=(1-\lambda^2)^{d-2}(p_{d,d}(1-\lambda^2)^2-1)$, we conclude that $p_{d,d}$ must be strictly greater than $(1-\lambda^2)^{-2}$. Finally, we define \[P_q:=\operatorname{Diag}(1,\cdots,1,q)\] where $\operatorname{Diag}(v)$ is the diagonal matrix for which the diagonal is $v$ and $q\in \left((1-\lambda^2)^{-2},+\infty\right)$. For \red{the} matrices $P_q$, we have
		\[
		\norm{A_\lambda}_{P_q}^2=\dfrac{1+2q\lambda^2+\sqrt{1+4\lambda^2 q}}{2q}
		\]  
		Finally,
		\[
		\mathfrak{F}_{\blue{\nu}}(k,\{h_{P_q}\}_{n\in\nn},\{\beta_{P_q}\}_{n\in\nn})
		=\dfrac{\ln\left(\dfrac{\lambda^{2k-2}(\lambda^2+0.5k^2+0.5k\sqrt{4\lambda^2+k^2}}{q}\right)}{\ln\left(\dfrac{1+2q\lambda^2+\sqrt{1+4\lambda^2 q}}{2q}\right)}.
		\]
		For our experiments, we \blue{select} $q=2(1-\lambda^2)^{-2}$. Table~\ref{smallbenchs} \blue{lists} the computations for several values of $\lambda$.
		\begin{center}
			\begin{table}[h!]
				\begin{center}
					\begin{tabular}{|c||c|c|c|}
						\hline
						& $\bigks_\blue{\nu}$ & $\displaystyle{\max_{k\in\nn} \blue{\nu}_k}$ & $\mathfrak{F}_{\blue{\nu}}(\bigks_{\blue{\nu}})$ \\
						\hline
						$\lambda=0.1$& 0 &1 &1\\
						\hline
						$\lambda=0.25$&0 &1 &1\\
						\hline
						$\lambda=0.5$& 1 & 1.4572 & 2\\
						\hline
						$\lambda=0.75$& 3 &3.1937 &7\\
						\hline
						$\lambda=0.9$&9 &15.3082 &20\\
						\hline
						$\lambda=0.99$&99 &1367.27 &221\\
						\hline
						$\lambda=0.999$&999 &135471  &2229\\
						\hline
						$\lambda=0.99995$&19999 &54136820.5  &44617\\
						\hline
					\end{tabular}
				\end{center}
				\caption{Numerical results}
				\label{smallbenchs}
			\end{table}
		\end{center}
		\subsubsection{Going beyond}
		As suggested in this paper, we could consider non constant sequences $(P_k)_{k\in\nn}$ in $\mathcal{L}_{A_\lambda}$. Following Corollary~\ref{maincoro}, we must impose \red{a} decreasing condition, that is, in this case, for all $k\in\nn$ 
		\[
		\dfrac{\lmax{P_{k+1}}}{\lmin{P_{k+1}}}\leq \dfrac{\lmax{P_{k}}}{\lmin{P_{k}}}\text{ and } \norm{A_\lambda}_{P_{k+1}}\leq \norm{A_\lambda}_{P_{k}}
		\]
		Note that $\norm{A_\lambda}_P$ is lower bounded by the spectral radius of $A_\lambda$, which is equal to $|\lambda|$. This was also suggested in~\cite{adje13052025} by considering a minimization problem over $\mathcal{L}_{A_\lambda}$
		\[
		\begin{array}{rcl}
			\operatorname{Min} & & F_k(P) \\  
			& \text{s.t.} & P\in\mathcal{L}_{A_\lambda}
		\end{array}
		\text{ where } F_k(P):=\dfrac{\ln\left(\dfrac{\lmin{P}}{\lmax{P}}\norm{A^k}_2^2\right)}{2\ln(\norm{A_\lambda}_P}
		\]
		In this case, monotonicity is a stronger condition  (from the integer $N$, if any, such that the sequence $(\norm{A_\lambda^k}_2)_k$ is decreasing ) \red{than} the condition $P_k$ minimizes $F_k$ over $\mathcal{L}_{A_\lambda}$.   
		
		\bibliographystyle{plain}
		\bibliography{shortnotebibfinal.bib}

	\end{document}